\documentclass[10pt]{amsart}
\usepackage{amsthm,amssymb,amsmath,amscd,epic,eepic}
\usepackage[all]{xy}

%%%%%%%%%%%%%%%%%%%%%%%%%%%%%%%%%%%%%%%%%%%%%%%%%%%%%%%%%%%%%%%%%%%%%
%   Debug Mode:                                                     %
%%%%%%%%%%%%%%%%%%%%%%%%%%%%%%%%%%%%%%%%%%%%%%%%%%%%%%%%%%%%%%%%%%%%%
%\newcommand    {\comment}[1]   {{\ \scriptsize{#1}\ }}
% \newcommand {\todo}[1] {{ \ \sf \scriptsize{#1} \ }}
%\newcommand{\mute}[2] {{\ \scriptsize \ #1\ }}
% \newcommand{\printname}[1]
% {\smash{\makebox[0pt]{\hspace{-1.0in}\raisebox{8pt}{\tiny #1}}}}
%\newcommand{\labell}[1] {\label{#1}\printname{#1}}
%
%%%%%%%%%%%%%%%%%%%%%%%%%%%%%%%%%%%%%%%%%%%%%%%%%%%%%%%%%%%%%%%%%%%%%
%   Clean Mode:                                                     %
%%%%%%%%%%%%%%%%%%%%%%%%%%%%%%%%%%%%%%%%%%%%%%%%%%%%%%%%%%%%%%%%%%%%%
 \newcommand     {\comment}[1]   {}
 \newcommand{\mute}[2] {}
\newcommand     {\printname}[1] {}
\newcommand{\labell}[1] {\label{#1}}
%%%%%%%%%%%%%%%%%%%%%%%%%%%%%%%%%%%%%%%%%%%%%%%%%%%%%%%%%%%%%%%%%%%%%%

\setcounter{tocdepth}{1}

%       Recall:  \newtheorem {type of theorem} {text to be written}
\numberwithin{equation}{section}
\newtheorem {Theorem}                   {Theorem}
\newtheorem*{Theorem*}                   {Theorem}
\newtheorem {refTheorem}[equation]      {Theorem}
\newtheorem {Lemma}[equation]           {Lemma}
\newtheorem* {Lemma*}                    {Lemma}

\newtheorem {Corollary} [equation]      {Corollary}
\newtheorem* {Corollary*}                {Corollary}
\newtheorem {Proposition} [equation]    {Proposition}

\theoremstyle{definition}

\newtheorem*{Definition*}{Definition}

\theoremstyle{remark}
\newtheorem{Remark}[equation]{Remark}
\newtheorem*{Remark*}{Remark}

\newtheorem*{Example*}{Example}

\long\def\symbolfootnote[#1]#2{\begingroup%
\def\thefootnote{\fnsymbol{footnote}}\footnote[#1]{#2}\endgroup}

\def    \cC             {{\mathcal C}}

\def    \cE             {{\mathcal E}}

\def \calM{{\mathcal M}}

\def \Hat 	{\widehat}
\def \ssminus	{\smallsetminus}
\def \eps 	{\epsilon}

\def \del	{{\partial}}
\def \delbar	{\overline{\partial}}
\def \zbar	{\overline{z}}
\def \ubar	{\overline{u}}
\def \Tilde	{\widetilde}

\def    \inv    {^{-1}}

\def	\mod	{/\!/}

\def    \R	{{\mathbb R}}

\def    \C	{{\mathbb C}}
\def    \Z       {{\mathbb Z}}
\def    \bbP       {{\mathbb P}}

\def    \half   {{\frac{1}{2}}}

%%%%  WARNING: the macro  \k  has another meaning.
%%%%\def    \k      {{\mathfrak k}}

% ====================================================================

\begin{document}

\title[Non-hamiltonian actions]
{Non-Hamiltonian actions with isolated fixed points}

\author[Susan Tolman]{Susan Tolman}
\address{Department of Mathematics, University of Illinois at Urbana-Champaign,
Urbana, IL 61801}
\email{stolman@math.uiuc.edu}

\thanks{\emph{2010 Mathematics Subject Classification}.
Primary 53D20, 53D35, 37J15.  Secondary 14J28, 57S15.}

\thanks{The author is partially supported by National Science Foundation Grant DMS-1206365.
}

\begin{abstract}
We construct  a non-Hamiltonian symplectic circle action 
on a closed,  connected,  six-dimensional symplectic manifold 
with exactly 32 fixed points.
\end{abstract}

\maketitle
%\tableofcontents

\section{Introduction}
\labell{ss:intro}

Let the circle $S^1 \simeq \R/\Z$ act on a (non-empty) closed, connected symplectic manifold $(M,\omega)$, and let $\xi_M$ be
the associated vector field on $M$.
The action is {\bf symplectic} if it
preserves the symplectic form,  equivalently,
if $ \iota_{\xi_M} \omega$ is closed.
The action is {\bf Hamiltonian} if 
there exists
a {\bf moment map}  $\Psi \colon M \to \R$
satisfying $$d \Psi = - \iota_{\xi_M} \omega,$$
equivalently, if $\iota_{\xi_M} \omega$  is exact.
In this case, we can reduce the number of degrees of freedom by passing 
to the {\bf reduced space} 
$M \mod_{\! t} \, S^1  := \Psi\inv(t)/S^1$, which is a 
symplectic orbifold for all regular $t \in \R$.
Moreover, 
the moment map  is a perfect Morse-Bott function whose critical set 
is the fixed set $M^{S^1}\!.\,$ 
Therefore,  the (equivariant) cohomology and (equivariant) Chern
classes of $M$ are largely determined by the fixed set;
for example,
\begin{equation}
\labell{Morse}
\sum_i \dim  H^i(M;\R) = \sum_i \dim H^i(M^{S^1}\! ; \R).
\end{equation}

This leads to the following important question: 
 What conditions force a symplectic action to be Hamiltonian?
By the discussion above, 
if $H^1(M;\R) = 0$ then
every
symplectic action is  Hamiltonian.
In contrast, equation  \eqref{Morse} implies that symplectic circle actions  with no fixed points,
such as the diagonal circle action on the torus $S^1 \times S^1$,  
are never Hamiltonian. 
Frankel made the first significant progress towards answering the question
by proving that 
a K\"ahler circle action on  a closed, connected
K\"ahler manifold is Hamiltonian exactly if it has fixed points \cite{Fr}.
In contrast, McDuff 
constructed 
a non-Hamiltonian symplectic circle action with fixed tori on
a closed, connected  six dimensional symplectic manifold  \cite{Mc}.
Since then, no new examples with fixed points have been constructed, 
but there has been a great  deal of work  proving that symplectic actions with fixed points must be Hamiltonian when 
various additional criteria are satisfied  
\cite{ On, Mc, Gin, LO, TWe, Gia, Go05, Go06, Fe, Ki06, Ro,  PT, Li,Ja, MPR}.
Nevertheless, the following question, 
asked by  McDuff and Salomon in \cite{MS} and often called the ``McDuff conjecture'', is  open:
 Does there exist a non-Hamiltonian symplectic circle action with  {\em isolated} fixed points
on  a closed, connected  symplectic manifold?
The main goal of this paper is to answer that question in the affirmative.  More precisely, we prove the following theorem:

\begin{Theorem}\labell{main}
There exists a non-Hamiltonian symplectic circle action 
with exactly $32$ fixed points
on a closed, connected, six-dimensional symplectic manifold.
\end{Theorem}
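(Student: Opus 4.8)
My plan rests on a topological principle that reduces the theorem to a purely constructive problem. Suppose we can build a closed, connected, six-dimensional symplectic manifold $(M,\omega)$ carrying a symplectic circle action with exactly $32$ \emph{isolated} fixed points and with $H^1(M;\R)\neq 0$. Then the action is automatically non-Hamiltonian. Indeed, if it were Hamiltonian with moment map $\Psi$, then $\Psi$ would be a perfect Morse-Bott function whose critical set is $M^{S^1}$; since the fixed points are isolated, $\Psi$ is Morse, and the index at each fixed point is twice the number of negative weights of the isotropy representation, hence even. A perfect Morse function with only even-index critical points forces $H^{\mathrm{odd}}(M;\R)=0$, contradicting $H^1(M;\R)\neq 0$. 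So the whole task reduces to constructing such an $(M,\omega)$ together with the action.

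To produce $H^1\neq 0$ while keeping the fixed points isolated, I would avoid products and fiberwise actions: a fiberwise action over a positive-dimensional base has positive-dimensional fixed set, an action translating the base has none, and in dimension four a symplectic action with nonempty fixed set is forced to be Hamiltonian (by a theorem of McDuff). Instead I would realize the action through a \emph{circle-valued moment map} $\bar\mu\colon M\to\R/\Z$ with $d\bar\mu=-\iota_{\xi_M}\omega$ locally, whose de Rham class $[\iota_{\xi_M}\omega]\in H^1(M;\R)$ is exactly the obstruction we want to be nonzero. Concretely, I would take a connected Hamiltonian $S^1$-cobordism $(W,\omega_W)$ with moment map $\mu\colon W\to[0,1]$, all of whose fixed points lie in the interior and are isolated, and glue the level $\mu\inv(1)$ to $\mu\inv(0)$ by an $S^1$-equivariant symplectomorphism. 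The resulting closed manifold $M$ inherits a symplectic form and a symplectic circle action whose fixed points are precisely those of $W$; the gluing forces the moment map to become $\R/\Z$-valued, wrapping once around, so $[\iota_{\xi_M}\omega]$ pairs nontrivially with the glued loop and $H^1(M;\R)\neq 0$.

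The content is then to exhibit $W$ so that the gluing is possible and the fixed-point count is $32$. Passing $\mu$ through an isolated fixed point modifies the reduced space $W\mod_{\!t}S^1=\mu\inv(t)/S^1$ (a symplectic four-orbifold) by a blow-up, blow-down, or flop determined by the weights, and changes the Duistermaat-Heckman form by a corresponding piecewise-linear jump. To glue end to end I must arrange that after crossing all interior fixed points the reduced space $W\mod_{\!1}S^1$ is symplectomorphic to $W\mod_{\!0}S^1$: the cumulative birational monodromy on the reduced surface must close up, and the cohomology class of the reduced symplectic form must return to its starting value. I would therefore fix a convenient reduced surface and prescribe a \emph{cyclic} sequence of wall-crossings on it, built from balanced pairs of blow-ups and blow-downs with carefully chosen weights and parameters, totalling $32$ fixed points, designed so that both the surface and its symplectic cohomology class are restored.

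The hard part is exactly this matching. I expect the main obstacle to be the simultaneous control of three pieces of data around the loop: the smooth birational type of the reduced surface, the cohomology class of its symplectic form (governed by the Duistermaat-Heckman variation, whose slope jumps are dictated by the weights at the fixed points), and the existence of an honest equivariant symplectomorphism realizing the closing-up gluing rather than a mere diffeomorphism. Ensuring that all $32$ local weight data are mutually consistent, that the intermediate reduced spaces remain genuine symplectic orbifolds, and that the global two-form on $M$ is nondegenerate, is where the real work — and the delicate role of the number $32$ — resides.
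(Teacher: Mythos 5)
Your overall architecture is the paper's: a Hamiltonian piece with isolated interior fixed points, closed up by an equivariant symplectomorphism of ends so that the moment map becomes circle-valued, with wall-crossings at the fixed levels altering the reduced four-dimensional spaces by blow-ups. Your non-Hamiltonianity reduction is also sound (indeed the paper argues even more cheaply: all $32$ fixed points have weights $\pm\{-2,1,1\}$, so none is an extremum of a would-be moment map). But the proposal stops exactly where the theorem begins, and you say so yourself: ``the hard part is exactly this matching.'' That matching is not a routine verification to be arranged by ``carefully chosen weights and parameters''; it is the entire content of the paper, and the key missing idea is \emph{which} reduced surface to use. Restoring the diffeomorphism type of the reduced space and the cohomology class $[\sigma_t]$ of the reduced form, as you propose, is in general nowhere near sufficient to produce a symplectomorphism of reduced spaces, let alone the equivariant symplectomorphism of neighborhoods needed for the gluing: a closed symplectic four-manifold is not determined up to symplectomorphism by its cohomology class, and for a generic ``convenient reduced surface'' no uniqueness mechanism exists.

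The paper's solution is to force every regular reduced space to be a tame K3 surface or the Kummer orbifold $T/\Z_2$, precisely because K3 surfaces admit a Torelli-type classification: tame K3 structures with equal symplectic classes (and compatible orientation data on $G_3^+$) are symplectomorphic (Proposition~\ref{classifyk3}, built on Theorem~\ref{torelli} and Donaldson's theorem \cite{Do}, and extended from the K\"ahler to the tame setting because the examples constructed are tame but not K\"ahler). Feeding this into the standard local uniqueness statement (Lemma~\ref{Hamunique}, which needs a symplectomorphism matching \emph{Euler classes}, whence the primitive-embedding hypotheses on $[\sigma_t]=\kappa-t\eta$ and the lattice-theoretic Lemma~\ref{iso}) yields Proposition~\ref{freeunique2}: on the free part, the germ of the Hamiltonian manifold is determined by the Duistermaat--Heckman polynomial. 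This is the mechanism that closes up your loop, and nothing in your sketch supplies a substitute. Equally missing is the source of the $32$ fixed points with consistent weights: they do not arise from an ad hoc cyclic sequence of blow-ups, but from the Tolman--Watts surgery (Propositions~\ref{TWa} and~\ref{simple fixed point}, from \cite{TWa}) which converts each of the $16$ orbits with $\Z_2$ stabilizer in the locally free Kummer family of Proposition~\ref{existslf} into a fixed point of weights $\{-2,1,1\}$, performed at the two levels $\pm 1$, giving $32 = 2 \times 16$; crossing those levels replaces $T/\Z_2$ by its resolution, a K3 surface, which is exactly what makes the Torelli gluing available on the free part with $\varphi(t) = -4+16t-4t^2$ matching $4+4t^2$ at $t=\pm 1$. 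Without these two ingredients --- a rigidity theorem for the reduced spaces and a surgery producing the prescribed isolated fixed points --- your proposal is a correct statement of the problem rather than a proof.
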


Now let $(M,\omega)$ 
fulfill the conclusions of Theorem~\ref{main}.
Given $n \geq 3$ and a Hamiltonian circle action on $\C \bbP^{n-3}$,
the  diagonal action on
the product  $\C \bbP^{n-3} \times M$ 
is  symplectic but not Hamiltonian.  
Hence, Theorem~\ref{main} has the following corollary.

\begin{Corollary}
Given $n \geq 3$, there exists a non-Hamiltonian symplectic circle action with exactly $32(n-2)$
fixed points on a closed, connected, $2n$-dimensional symplectic manifold. 
\end{Corollary}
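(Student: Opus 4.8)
The corollary follows by taking products, so the plan is to exhibit the claimed action on a product manifold and count the resulting fixed points.

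\begin{proof}[Proposal]
The strategy is to combine the manifold $(M,\omega)$ furnished by Theorem~\ref{main} with a standard Hamiltonian model on projective space. First I would fix $n \geq 3$ and equip $\C\bbP^{n-3}$ with any Fubini--Study symplectic form and a Hamiltonian circle action whose fixed points are isolated; the simplest choice is the action induced by scaling the coordinates with distinct integer weights, which has exactly $n-2$ isolated fixed points (the coordinate points of $\C\bbP^{n-3}$). Call this action $\rho$ and its symplectic form $\omega'$, and note that $(\C\bbP^{n-3}, \omega')$ is closed and connected.

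Next I would form the product $X := \C\bbP^{n-3} \times M$ with the product symplectic form $\omega' \oplus \omega$, which is again closed, connected, and of real dimension $2(n-3) + 6 = 2n$. The circle acts diagonally: $t \cdot (p,m) = (\rho_t(p), \sigma_t(m))$, where $\sigma$ denotes the action from Theorem~\ref{main}. Since each factor preserves its symplectic form, the diagonal action preserves $\omega' \oplus \omega$, so it is symplectic. The generating vector field of the diagonal action is $\xi_X = \xi_{\C\bbP^{n-3}} \oplus \xi_M$, so $\iota_{\xi_X}(\omega'\oplus\omega) = \iota_{\xi_{\C\bbP^{n-3}}}\omega' \oplus \iota_{\xi_M}\omega$ under the natural splitting of one-forms on a product.

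The one genuine point to check is that this diagonal action is \emph{not} Hamiltonian. The fixed point set of the product is the product of fixed point sets, $X^{S^1} = (\C\bbP^{n-3})^{S^1} \times M^{S^1}$, which consists of $(n-2)\cdot 32 = 32(n-2)$ isolated points; so the fixed point count in the statement is immediate once non-Hamiltonicity is established. For the latter, I would argue by contradiction: if the diagonal action admitted a moment map $\Psi \colon X \to \R$, then $\iota_{\xi_X}(\omega'\oplus\omega)$ would be exact on $X$. Restricting to a slice $\{p_0\} \times M$ through a fixed point $p_0$ of $\rho$, the form $\iota_{\xi_X}(\omega'\oplus\omega)$ pulls back to $\iota_{\xi_M}\omega$, and exactness of the former would force exactness of the latter on $M$, contradicting the fact that the action from Theorem~\ref{main} is not Hamiltonian. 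The main obstacle is making this restriction argument clean at the level of cohomology, since one must ensure that the closed class $[\iota_{\xi_M}\omega] \in H^1(M;\R)$ is genuinely nonzero and that the inclusion $M \hookrightarrow X$ as $\{p_0\}\times M$ pulls it back faithfully; by the K\"unneth formula $H^1(X;\R) \cong H^1(\C\bbP^{n-3};\R) \oplus H^1(M;\R) = 0 \oplus H^1(M;\R)$, the restriction map on $H^1$ is an isomorphism onto the $M$-factor, which secures the argument.
\end{proof}
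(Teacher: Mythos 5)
Your proposal is correct and is essentially the paper's own argument: the paper likewise forms the product $\C\bbP^{n-3}\times M$ with the diagonal action, which is symplectic but not Hamiltonian, and counts $32(n-2)$ fixed points. Your restriction-to-a-slice argument merely makes explicit the non-Hamiltonicity claim that the paper leaves as a one-line assertion (and the concluding K\"unneth remark is unnecessary, since an exact $1$-form on $X$ already pulls back to an exact $1$-form on $\{p_0\}\times M$).
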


\begin{Remark}
McDuff proved that a symplectic circle action on 
a closed, connected,  $2n$-dimensional symplectic manifold with $n \leq 2$
is Hamiltonian exactly if it has fixed points \cite{Mc}.
Thus, the example described in Theorem~\ref{main}  has
the lowest possible dimension. 

In contrast, 
there probably exist examples with fewer fixed points.
A non-Hamiltonian symplectic circle action on a closed symplectic
$2n$-dimensional manifold cannot have exactly one fixed point.  
However,  we can't rule out the possibility of such an
action with two fixed points unless $n \neq 3$
 \cite{PT}.
On the other hand, Jang ruled out the possibility of such an action with
three fixed points \cite{Ja}.
Moreover, if $n$ is odd the number of fixed points is even \cite{PT}.
\end{Remark}

To elaborate on the example in Theorem~\ref{main}, we need  some terminology.
Let a circle act symplectically on a $2n$-dimensional 
closed symplectic manifold $(M,\omega)$.
Given   $p \in M^{S^1}$  
there is a unique multiset of integers $\{w_1,\dots,w_n\}$, 
called {\bf (isotropy)  weights},
so that the induced symplectic representation of $S^1$ on $T_p M$ is isomorphic 
to  
the representation on 
$(\C^n, \sqrt{-1}/2 \sum_i dz_i \wedge d \overline{z}_i)$
given by
$\lambda \cdot z = (\lambda^{w_1} z_1,\dots,\lambda^{w_n} z_n)$.
A  map $\Psi \colon M \to S^1$ is a {\bf generalized moment map}
exactly if
$\Psi^*(dt) = - \iota_{\xi_M} \omega$. 
As in the Hamiltonian case, 
the {\bf reduced space}
$M \mod_{\! t} \, S^1  := \Psi\inv(t)/S^1$ is a $2n -2$ dimensional symplectic orbifold for all regular $t \in \R$. 
The {\bf Duistermaat-Heckman} function of $M$
is the  unique continuous function $\varphi \colon \Psi(M) \to \R$
satisfying
$$\varphi(t) = \int_{M \mod_{\! t} \, S^1} \omega_t^{n-1}$$ 
for all regular $t \in \Psi(M)$, where $\omega_t \in \Omega^2(M \mod_{\! t} \, S^1)$
is the reduced symplectic form \cite{DH}.
A {\bf K3 surface} is a closed, connected  complex surface $(X,I)$
with $H^1(X;\R) = \{ 0 \}$ and trivial  canonical bundle. 
A symplectic form $\sigma \in \Omega^2(X)$
is {\bf tamed} if   $\sigma(v, I(v)) > 0$ for all nonzero tangent vectors $v$; 
it is {\bf K\"ahler} if, additionally, 
$\sigma(I(v),I(w)) = \sigma(v,w)$ for all tangent vectors $v$ and $w$.
In this case, we say that  the triple $(X,I,\sigma)$ is a {\bf tame} (respectively, {\bf K\"ahler})
{\bf K3 surface}.
Finally, 
$\Z_2$ acts holomorphically on the  torus $T = \C^2/(\Z^2 + \sqrt{-1}\, \Z^2)$ 
by the involution $[z] \mapsto [-z]$. The quotient $T/\Z_2$ is a {\bf Kummer surface}; it is a complex orbifold with exactly $16$ isolated singular points with isotropy  $\Z/(2)$.
\\

\noindent
{\bf Theorem 1 redux.}
{\em There exists a non-Hamiltonian symplectic circle action 
on a closed, connected, six-dimensional symplectic manifold $(M,\omega)$
with a generalized moment map $\Psi \colon M \to \R/(4\Z) \simeq S^1$.
The level sets $\Psi\inv(\pm 1)$ each contain $16$ fixed points
with weights $\pm \{-2,1,1\}$; additionally,
$\Z/(2)$ fixes $16$ two-spheres with moment image $[-1,1]$.
Otherwise, the action is free.
The Duistermaat-Heckman function is
$$\varphi(t) = \begin{cases} 
\ 4 + 4 t^2 & -1  \leq t \leq 1 \\
-4 + 16 t - 4t^2 & \ 1  \leq t \leq 3. \\
\end{cases}
$$
Finally, the reduced space $M \mod_{\! t} \, S^1$ is symplectomorphic to a 
tame K3 surface for all $t \in (1,3)$ and diffeomorphic to 
the Kummer surface $T/\Z_2$ for all $t \in (-1,1)$. 
}\\

Our proof of Theorem 1 is adapted from \cite{Ko},
where Kotschick
answered another question of McDuff and Salomon \cite{MS} by constructing a free 
(and therefore non-Hamiltonian) symplectic circle action on a six dimensional 
closed symplectic manifold  with contractible orbits; 
see \S\ref{Kotschick}.
In particular, many of the ideas in \S\ref{ss:K3} and \S\ref{ss:free}
are taken directly from Kotschick's paper, although we handle the technical details differently.
(For example,  he considers K\"ahler
K3 surfaces, but we need to allow tame K3 surfaces because the example we construct
in \S\ref{ss:locally free} is tame but not K\"ahler.)

\begin{proof}[Proof of Theorem~\ref{main}]
Clearly,
$ -4 + 16 t - 4 t^2 > 0$ for all $t \in [1,3].$
Therefore, by
Proposition \ref{existsfree2},  
there exists a free circle action
on a symplectic manifold $(M',\omega')$ with proper moment map 
$\Psi' \colon M' \to (1,3)$ so that, for all $t \in (1,3)$, 
the reduced space $M' \mod_{\! t} \, S^1$ is symplectomorphic to a 
tame K3 surface $(X', I_t', \sigma'_t)$; moreover,
\begin{itemize}
\item $(\sigma'_t,\sigma'_t) = -4 + 16t - 4t^2$; and
\item $[\sigma'_t] =  \kappa' - t \eta'$, 
where $\kappa', \eta'$ induce a primitive embedding $\Z^2 \hookrightarrow H^2(X';\Z)$\footnote{
An embedding $(\ell_1,\ell_2) \mapsto \ell_1 \kappa' + \ell_2 \eta'$
is {\bf primitive} if every lattice element that is a {\em real} linear combination of $\kappa'$ and $\eta'$
is also an {\em integral} linear combination.}.
\end{itemize}

Let $(M_+,\omega_+,\Psi_+)$ and $(M_-,\omega_-,\Psi_-)$ be the symplectic manifolds with 
locally free\footnote{A Lie group $G$ acts {\bf locally freely} on $M$
if the set of $g \in G$ with non-empty fixed set $\{x \in M \mid g \cdot x = x\}$ is discrete.}
 circle actions and proper moment maps described in
Proposition~\ref{existslf}. 
(For now, ignore the complex structure.)
The reduced space $M_\pm \mod_{\! t} \, S^1$ is diffeomorphic
to the Kummer surface $T/\Z_2$ for all $t \in \R$.
Moreover, the Duistermaat-Heckman function of $M_\pm$ is $4 + 4t^2$; see Remark~\ref{varphi}.
Finally, 
$\Psi_+\inv(-1,1) \subset M_+$ and  $\Psi_-\inv(-1,1) \subset M_-$
are equivariantly symplectomorphic.

Let $\big(\Tilde{M}_+,\Tilde{\omega}_+,\Tilde{\Psi}_+\big)$ and
$\big(\Tilde{M}_-,\Tilde{\omega}_-,\Tilde{\Psi}_-\big)$ 
be the symplectic manifolds with circle actions 
and proper moment maps described in Proposition~\ref{existsfixed}.
Fix $\epsilon > 0$ sufficiently small.
The preimages $\Tilde{\Psi}_\pm(\pm (-\infty,1+\epsilon))$
each contain exactly $16$ fixed points;
each lies in  $\Tilde\Psi_\pm (\pm 1)$ and has weights 
$\pm \{-2, 1,1\}$.
Additionally, there exist  $a_\pm < b_\pm$ in $\pm (0,1)$ so that
$\Psi_\pm\inv(a_\pm, b_\pm) \subset M_\pm $ 
and $\Tilde{\Psi}_\pm\inv(a_\pm,b_\pm) \subset \Tilde{M}_\pm$  are equivariantly symplectomorphic.
Finally, for all $t \in \pm ( 1, 1 + \epsilon)$, the reduced space 
$\Tilde{M}_\pm \mod_{\!  t} \, S^1$  is symplectomorphic to a tame K3 surface 
$\big(\Tilde{X},\Tilde{I},(\Tilde{\sigma}_\pm)_t\big)$; moreover,
\begin{itemize}
\item $\big((\Tilde{\sigma}_\pm)_t, (\Tilde{\sigma}_{\pm})_t \big)   = -4 \pm 16 t - 4 t^2$; and
\item 
$\big[(\Tilde{\sigma}_\pm)_t\big] = \Tilde{\kappa} -  t \Tilde{\eta}_\pm$,
where $\Tilde{\kappa},\Tilde{\eta}_\pm$ induce a primitive embedding
$\Z^2 \hookrightarrow H^2(\Tilde{X};\Z)$.
\end{itemize}

The first and third paragraphs above imply that
\begin{gather*}
 \big((\Tilde{\sigma}_+)_t, (\Tilde{\sigma}_+)_t \big) =
\big(\sigma'_{t}, \sigma'_{t }\big)
\mbox{ for all } t \in (1, 1 + \epsilon); \quad \mbox{and}  \\
 \big((\Tilde{\sigma}_-)_t, (\Tilde{\sigma}_-)_t \big) =
\big(\sigma'_{t+4}, \sigma'_{t + 4}\big)
\mbox{ for all } t \in (-1 - \epsilon, -1). 
\end{gather*}
Thus, by Proposition~\ref{freeunique2}  they
imply that there exist  $a'_\pm < b'_\pm$ in  $ \pm (1,1+\epsilon)$
so that $\Tilde\Psi_+\inv(a'_+,b'_+) \subset \Tilde{M}_+$ 
and $(\Psi')\inv(a'_+,b'_+) \subset M'$ are equivariantly symplectomorphic, and also
$\Tilde\Psi_-\inv(a'_-,b'_-) \subset \Tilde{M}_-$  and
$(\Psi')\inv(a'_- + 4,b'_- + 4) \subset M'$ are equivariantly symplectomorphic.

Therefore, we can glue together $(\Psi')\inv(a_+',b_-' + 4) \subset M'$,
$\Psi_\pm\inv(a_-,b_+) \subset M_\pm$, $\Tilde{\Psi}_+\inv(a_+,b_+') \subset \Tilde{M}_+$, and $\Tilde{\Psi}_-\inv(a'_-,b_+) \subset \Tilde{M}_-$
to construct a symplectic circle action
on a closed, connected six-dimensional symplectic manifold $M$ with 
a generalized moment map $\Psi \colon M \to \R/(4\Z)$ satisfying all
requirements.
In particular, the action is not Hamiltonian because there is no fixed point
with all positive (or negative) weights.
\end{proof}

The structure of this paper is straightforward.
In Section~\ref{ss:K3}, we classify tame K3 surfaces up to symplectomorphism.
In Section~\ref{ss:free}, 
we use  this classification to  analyze  free Hamiltonian circle actions on symplectic manifolds with reduced spaces symplectomorphic to tame K3 surfaces. 
Under favorable conditions, these are classified by their Duistermaat-Heckman function.
In Section~\ref{ss:locally free}  we construct 
locally free Hamiltonian circle actions on symplectic manifolds 
with reduced spaces diffeomorphic to the Kummer surface $T/\Z_2$.
Finally, in  Section~\ref{ss:fixed},
we use results from \cite{TWa} to add fixed points to the examples constructed in \S\ref{ss:locally free}.
In that  paper, which is joint with Jordan Watts,
we extend some important constructions and theorems  from the 
symplectic and K\"ahler categories to the tame category.

\section{K3 surfaces}
\labell{ss:K3}

In this section, 
we  classify  tame K3 surfaces
up to symplectomorphism.
This is a fairly straightforward consequence of  classification of 
marked K\"ahler K3 surfaces, which we  review,
closely following \cite{BPV}\footnote{
However, since we  classify K3 surfaces up to symplectomorphism, 
our definition of ``K\"ahler K3 surface" includes the K\"ahler form
(see \S\ref{ss:intro}), while
\cite{BPV} only include its cohomology class.
}.

Let $L$ be the {\bf K3 lattice}, that is, the 
even unimodular lattice with signature $(3,19)$; let $L_\R := L \otimes_\Z \R$.
If $(X,I)$ is a K3 surface,
there is an {\bf isometry}  from  $H^2(X;\Z)$ (with the cup product pairing)
to $L$
\cite[Proposition VIII.3.2]{BPV},
that is, an isomorphism of groups that preserves the  symmetric bilinear forms.
Additionally,  any two K3 surfaces are diffeomorphic \cite[Corollary VIII.8.6]{BPV}.
Given  
a vector space $V$  with  symmetric bilinear form $( \cdot, \cdot)$ and $k \in \Z$,
let $G_k^+(V)$ denote the manifold of oriented $k$-planes in $V$
on which
$( \cdot, \cdot)$  is positive definite.
The manifold $G_3^+(L_\R)$ has two components, and so
we can state our classification as follows:

\begin{Proposition}\labell{classifyk3}
Let $X$ be a manifold admitting a K3 structure.
\begin{enumerate}
\item Given $\kappa \in H^2(X;\R)$, there is a tame K3  structure
$(I,\sigma)$ on $X$ with $[\sigma] = \kappa$ if and only if $(\kappa,\kappa) > 0$.
\item Given tame K3 structures $(I_0,\sigma_0)$ and $(I_1,\sigma_1)$ on $X$
and an isometry $\phi \colon H^2(X;\Z) \to H^2(X;\Z)$, there  is
a symplectomorphism $f$ from $(X,\sigma_0)$ to $(X,\sigma_1)$  satisfying
$f^* = \phi$  if  and only if
%\begin{enumerate}
%\item 
$\phi\big([\sigma_1]\big) = [\sigma_0]$ and
%\item 
$\phi$ preserves the components of $G_3^+(H^2(X;\R))$.
%\end{enumerate}
\end{enumerate}
\end{Proposition}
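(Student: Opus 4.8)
The plan is to reduce everything to the classification of marked K\"ahler K3 surfaces, namely the strong Torelli theorem together with surjectivity of the period map, after first replacing tamed forms by K\"ahler forms in the same class without changing the symplectomorphism type. Throughout I fix the isometry $H^2(X;\Z) \cong L$ coming from a K3 structure and work in the period domain $\Omega = \{[\omega] \in \bbP(L_\C) : (\omega,\omega) = 0,\ (\omega,\overline{\omega}) > 0\}$, which has two components interchanged by conjugation, corresponding to the two components of $G_3^+(L_\R)$. Matching these two pairs of components is the orientation datum that the hypothesis about $G_3^+$ is designed to control.

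For part (1), the ``only if'' direction is immediate: if $\sigma$ tames $I$ then $\sigma \wedge \sigma$ is a positive multiple of the complex volume form, so $(\kappa,\kappa) = \int_X \sigma\wedge\sigma > 0$. For the ``if'' direction I want to produce a complex structure $I$ for which $\kappa$ lies in the K\"ahler cone; a K\"ahler form in that class then finishes the job. To do this I choose an oriented positive $3$-plane $P \ni \kappa$, let the oriented $2$-plane $P \cap \kappa^\perp$ determine a period $[\omega]$ (so $\kappa \perp \omega$, i.e.\ $\kappa$ is of type $(1,1)$), and pick $[\omega]$ generic in the sub-period-domain $\Omega_{\kappa^\perp}$ attached to the signature $(2,19)$ lattice $\kappa^\perp$. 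A Baire-category argument shows that for generic $[\omega]$ the Picard lattice $\{\ell \in L : (\ell,\omega) = 0\}$ equals $L \cap \R\kappa$, which contains no $(-2)$-classes since $\kappa$ has positive square. By surjectivity of the period map there is a K3 surface realizing $[\omega]$; having no $(-2)$-curves, its K\"ahler cone is the full positive cone, and orienting $P$ appropriately (equivalently, passing to $I$ or its conjugate) places $\kappa$ in it.

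For part (2), the ``only if'' direction again follows by pullback: $f^*[\sigma_1] = [\sigma_0]$ gives $\phi([\sigma_1]) = [\sigma_0]$, and since $f^*(\sigma_1^2) = \sigma_0^2 > 0$ the map $f$ is orientation-preserving, so $\phi = f^*$ preserves the components of $G_3^+$. For the converse, the first step is to reduce to K\"ahler forms: because $\sigma_i$ tames $I_i$, its class pairs positively with every holomorphic curve, hence with every $(-2)$-curve, so $[\sigma_i]$ lies in the K\"ahler cone of $(X,I_i)$; choosing a K\"ahler form in that class and using that the taming condition is convex, Moser's theorem supplies a symplectomorphism inducing the identity on cohomology. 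Thus I may assume $\sigma_0,\sigma_1$ are K\"ahler. Next I exploit the freedom in the complex structure: choose a generic period $\omega_1 \perp [\sigma_1]$ keeping $\sigma_1$ K\"ahler, and set $\omega_0 := \phi(\omega_1)$. Since $\phi$ is an isometry with $\phi([\sigma_1]) = [\sigma_0]$ preserving the components of $G_3^+$, the class $\omega_0$ is again a generic period in the correct component with $\omega_0 \perp [\sigma_0]$, so some complex structure $I_0'$ realizes it with $\sigma_0$ K\"ahler, and $\phi$ becomes a Hodge isometry carrying the K\"ahler class $[\sigma_1]$ to the K\"ahler class $[\sigma_0]$. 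The strong Torelli theorem then yields a biholomorphism $f_0$ with $f_0^* = \phi$; finally $f_0^*\sigma_1$ and $\sigma_0$ are cohomologous K\"ahler forms for $I_0'$, so convexity of the K\"ahler cone and Moser produce a further symplectomorphism isotopic to the identity, and composing gives the desired $f$ with $f^* = \phi$.

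I expect the main obstacle to be the orientation bookkeeping in part (2): one must verify that ``$\phi$ preserves the components of $G_3^+$'' is precisely the condition guaranteeing that $\omega_0 = \phi(\omega_1)$ lands in the component of $\Omega$ for which $\sigma_0$ itself, rather than its conjugate partner, is K\"ahler, so that $\phi$ is realized by a genuine Hodge isometry and strong Torelli applies. The identification of the two components of $\Omega$ with those of $G_3^+(L_\R)$, and the Baire argument eliminating $(-2)$-curves, are routine but must be stated with care so that this matching is unambiguous.
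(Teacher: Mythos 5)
Your proposal contains a genuine and central gap in the reduction ``I may assume $\sigma_0,\sigma_1$ are K\"ahler.'' A tamed class need not be of type $(1,1)$: it generally has a nonzero $(2,0)+(0,2)$ component, so $[\sigma_i]$ need not lie in $H^{1,1}(X;\R)$ at all, hence cannot lie in the K\"ahler cone and contains no K\"ahler form. What is true (Lemma~\ref{tamedKahler}) is only that the orthogonal projection $\Hat\kappa$ of $\kappa = [\sigma_i]$ onto $H^{1,1}(X;\R)$ is tamed; but by \eqref{kappahat2} one has $(\Hat\kappa,\Hat\kappa) < (\kappa,\kappa)$ whenever the projection is proper, so a K\"ahler form in the projected class has strictly smaller volume than $\sigma_i$, the two forms are not cohomologous, and Moser cannot connect them --- there is no symplectomorphism inducing the identity on cohomology. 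With this reduction gone, your next step, ``choose a generic period $\omega_1 \perp [\sigma_1]$ keeping $\sigma_1$ K\"ahler,'' is also unjustified: deforming the complex structure while a \emph{fixed} form (or even a fixed class) stays tame requires knowing that the locus of periods compatible with the fixed class $\kappa$ is connected within the relevant component, and that the resulting family of tamed forms with constant class can be converted into a symplectomorphism. That is precisely the machinery the paper builds and that you dismiss as orientation bookkeeping: Lemma~\ref{TildeKOmega}.(2) (each component of $(\{\kappa\}\times\Omega)\cap(\Tilde{K\Omega})^\circ$ is connected), path-surjectivity of the refined period map (Theorem~\ref{existsk3} and Corollary~\ref{existstamek3}), and the Moser-along-paths argument of Lemma~\ref{pathforms}. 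Note that the paper never reduces to K\"ahler forms on $X$ itself; it keeps tame forms throughout and uses the $(1,1)$-projection only to verify that the relevant Hodge isometries are effective (Corollary~\ref{tamedtorelli1}).

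There are two further gaps. In the necessity half of part (2), your inference ``$f$ is orientation-preserving, so $\phi = f^*$ preserves the components of $G_3^+$'' is a non sequitur: every self-diffeomorphism of a K3 preserves the orientation of $X$ (the signature is $-16$), yet $-{\bf 1}$ on $H^2(X;\Z)$ is an isometry reversing the components of $G_3^+$; excluding such $f^*$ is exactly Donaldson's theorem \cite{Do}, which the paper invokes and you never do. And in part (1), surjectivity of the period map produces an \emph{abstract} marked K\"ahler K3 surface with the desired period; to conclude that the \emph{given} manifold $X$ carries a tame structure whose class is the given $\kappa \in H^2(X;\R)$, you must realize the comparison isometry of $H^2$ by an actual diffeomorphism of $X$ --- this is the paper's Theorem~\ref{lift}, which again rests on the path-lifting machinery plus Donaldson. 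Your remedy of ``passing to $I$ or its conjugate'' only negates the K\"ahler cone and cannot absorb a general marking discrepancy. Your Baire-genericity argument eliminating $(-2)$-classes is fine and is a reasonable substitute for the paper's use of the decorated spaces $(K\Omega)^\circ$, but it does not repair the three issues above.
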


To prove this, we begin with a brief review of complex surfaces.
Let $(X,I)$ be a  {\bf complex surface}, that is, a two-dimensional complex manifold;
assume that $X$ is closed and connected.
Given non-negative integers $p$ and $q$ with $p+q = 2$, 
the Dolbeault cohomology $H^{p,q}(X)$
is naturally isomorphic to the subspace of the de Rham cohomology $H^{2}(X;\C)$ whose elements can be represented 
by a $d$-closed form of type $(p,q)$
\cite[Theorem IV.2.9]{BPV}.
Moreover, under this identification, 
$$H^2(X;\C) = H^{2,0}(X) \oplus H^{1,1}(X) \oplus H^{0,2}(X).$$
A class $d$ in the
{\bf Picard lattice}  $H^{1,1}(X) \cap H^2(X;\Z)$\footnote{
By a slight abuse of notation, 
we identify $H^2(X;\Z)$ with its image in $H^2(X;\R)$ and $H^2(X;\R)$
with its image in $H^2(X;\C)$. 
Since the cohomology of K3 surfaces is torsion-free, we trust that
this will not cause confusion.}
is {\bf effective} if there exists an effective divisor $D$
so that $c_1(\mathcal{O}_X(D)) = d$.
A class $\kappa \in H^2(X;\R)$ is {\bf tamed} (respectively, 
{\bf K\"ahler})
if it contains a tamed (respectively,  K\"ahler) form.
We will need the following fact.
\begin{Lemma}\labell{tamedKahler}
Let $(X,I)$ be a  closed connected complex surface. 
Given $\kappa \in H^2(X;\R)$, the  orthogonal projection $\Hat\kappa$
of $\kappa$ onto $H^{1,1}(X;\R)$
is tamed exactly if  $\kappa$ is.
\end{Lemma}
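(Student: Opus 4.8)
The plan is to reduce everything to two observations: the tameness condition $\sigma(v,I(v)) > 0$ depends only on the $I$-invariant (type $(1,1)$) part of the two-form $\sigma$, and the classes $\kappa$ and $\Hat\kappa$ differ by a class admitting a closed representative that is pointwise $I$-anti-invariant. Once these are in hand, the equivalence follows by simply adding or subtracting that representative.

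First I would record the Hodge-theoretic input. Since wedging a form of type $(2,0)$ or $(0,2)$ against one of type $(1,1)$ produces a form of type $(3,1)$ or $(1,3)$, which vanishes on a surface, the cup product pairing makes $H^{1,1}(X)$ orthogonal to $H^{2,0}(X)\oplus H^{0,2}(X)$. Taking real points and writing $W := \big(H^{2,0}(X)\oplus H^{0,2}(X)\big)\cap H^2(X;\R)$, complex conjugation gives an orthogonal direct sum $H^2(X;\R) = H^{1,1}(X;\R)\oplus W$; since the total pairing is non-degenerate, so is its restriction to each summand, and hence $\Hat\kappa$ is precisely the type $(1,1)$ component of $\kappa$, with $\kappa - \Hat\kappa \in W$. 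Writing $\kappa - \Hat\kappa = \alpha + \overline{\alpha}$ with $\alpha \in H^{2,0}(X)$, I would invoke the identification of $H^{2,0}(X)$ with de Rham classes represented by $d$-closed forms of type $(2,0)$ to choose a ($d$-closed, hence holomorphic) $(2,0)$-form $\omega$ with $[\omega]=\alpha$; then $\rho := \omega + \overline{\omega}$ is a real closed two-form with $[\rho] = \kappa - \Hat\kappa$.

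The key pointwise computation is next. For any real two-form $\gamma$, decompose it into its $I$-invariant and $I$-anti-invariant parts $\gamma^{\pm}$, characterized by $\gamma^{\pm}(I(u),I(w)) = \pm\,\gamma^{\pm}(u,w)$. Applying anti-invariance to the pair $(v,I(v))$ and using $I^2 = -\id$ together with the antisymmetry of $\gamma^{-}$ yields $\gamma^{-}(v,I(v)) = -\gamma^{-}(v,I(v))$, so $\gamma^{-}(v,I(v)) = 0$ for every tangent vector $v$. Thus $\gamma(v,I(v)) = \gamma^{+}(v,I(v))$ depends only on the $I$-invariant part of $\gamma$. Because $\rho$ has type $(2,0) + (0,2)$, it is pointwise $I$-anti-invariant, and therefore $\rho(v,I(v)) = 0$ for all $v$.

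Finally I would assemble the equivalence. If $\sigma$ is a tamed form with $[\sigma] = \kappa$, then $\sigma - \rho$ is closed, represents $\Hat\kappa$, and satisfies $(\sigma - \rho)(v,I(v)) = \sigma(v,I(v)) > 0$, so $\Hat\kappa$ is tamed; conversely, if $\tau$ is tamed with $[\tau] = \Hat\kappa$, then $\tau + \rho$ is closed, represents $\kappa$, and satisfies $(\tau + \rho)(v,I(v)) = \tau(v,I(v)) > 0$, so $\kappa$ is tamed. I do not expect a genuine obstacle here: the entire content sits in the pointwise identity $\rho(v,I(v)) = 0$ and in the orthogonality of $H^{1,1}(X)$ with $H^{2,0}(X)\oplus H^{0,2}(X)$, with the remaining steps being bookkeeping about representatives of cohomology classes.
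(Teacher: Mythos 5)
Your proposal is correct and follows essentially the same route as the paper's proof: both write $\kappa = \Hat\kappa + [\alpha + \overline{\alpha}]$ for a $d$-closed $\alpha \in \Omega^{2,0}(X)$ and conclude from the pointwise identity $\alpha(v,I(v)) = \overline{\alpha}(v,I(v)) = 0$ that adding or subtracting this representative preserves tameness. You merely make explicit two steps the paper leaves implicit, namely that the orthogonal projection onto $H^{1,1}(X;\R)$ coincides with the $(1,1)$-component of $\kappa$ and that $I$-anti-invariant forms vanish on pairs $(v, I(v))$.
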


\begin{proof}
By the discussion above, there exists a $d$-closed form
$\alpha \in \Omega^{2,0}(X)$  so that $\kappa = \Hat\kappa + [\alpha + \overline{\alpha}]$.
Moreover,  $\alpha(v,I(v)) = \overline{\alpha}(v,I(v)) = 0$ for all $\alpha \in \Omega^{2,0}(X)$ and all tangent vectors $v$.
\end{proof}

Now assume that $\dim H^1(X;\R)$ is even,  and  
define $$H^{1,1}(X;\R) := H^{1,1}(X) \cap H^2(X;\R).$$
Because the signature of $H^{1,1}(X;\R)$ is $(1, \dim H^{1,1}(X)  - 1)$ \cite[Theorem IV.2.13]{BPV},
the set
\begin{equation}\labell{cones}
\left\{ \left. x \in H^{1,1}(X;\R) \ \right\vert \  (x ,x ) > 0 \right\}
\end{equation}
consists of two disjoint connected cones.

Finally, assume that  $(X,I)$ admits a K\"ahler form.
Since the K\"ahler classes  form a convex subcone of  \eqref{cones},
they lie in one of the two cones;  we call it the {\bf positive cone} and 
denote it by $\cC_X$.

Now let 
$(X',I')$ be another complex surface satisfying the assumptions above.
A {\bf Hodge isometry}   is an isometry $\phi \colon H^2(X';\Z) \to H^2(X;\Z)$\footnote{
Where convenient, we identify $\phi \colon H^2(X';\Z) \to H^2(X;\Z)$ with
its $\R$-linear  and
$\C$-linear extensions.}
that preserves the Hodge decomposition,
that is, 
$$\phi\big(H^{p,q}(X')\big) = H^{p,q}(X) \quad \mbox{for all }
p + q = 2.$$
A  Hodge isometry $\phi$ is {\bf effective} if it preserves the 
positive cones and induces
a bijection between the respective sets of effective classes.

By definition, every K3 surface $(X,I)$ is a  closed, connected complex
surface with $\dim H^1(X;\R) = 0$ even.
Moreover, Siu proved that every K3 surface admits a K\"ahler form \cite[Theorem VIII.14.5]{BPV}.
Therefore, we can restate \cite[Theorem VIII.11.1]{BPV} as follows:

\begin{refTheorem}[Torelli theorem]\labell{torelli}
Let $(X,I)$ and $(X',I')$ be  K3 surfaces.
Given  an effective  Hodge isometry $\phi \colon H^2(X',\Z) \to  H^2(X,\Z)$, 
there exists a biholomorphism
$g \colon X \to X'$ such that $g^* = \phi$.
\end{refTheorem}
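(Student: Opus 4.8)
The plan is not to reprove this deep result but to deduce it directly from the classical strong Torelli theorem for K3 surfaces, \cite[Theorem VIII.11.1]{BPV}; the only content of the restatement is to check that K3 surfaces satisfy the standing hypotheses under which the positive cone and the notion of effective Hodge isometry were defined above.

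First I would observe that, by definition, every K3 surface $(X,I)$ is a closed, connected complex surface with $\dim H^1(X;\R) = 0$, which is in particular even; hence \cite[Theorem IV.2.13]{BPV} gives that $H^{1,1}(X;\R)$ has signature $(1,\dim H^{1,1}(X)-1)$, so the set \eqref{cones} splits into two disjoint cones. Next I would invoke Siu's theorem \cite[Theorem VIII.14.5]{BPV} to guarantee that both $X$ and $X'$ admit K\"ahler forms; this is precisely the hypothesis used above to single out the positive cone $\cC_X$ (resp.\ $\cC_{X'}$) among the two cones of \eqref{cones}, so that the definitions of positive cone and of effective Hodge isometry are meaningful here. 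With these observations, the hypotheses of \cite[Theorem VIII.11.1]{BPV} hold, and the existence of a biholomorphism $g \colon X \to X'$ with $g^*=\phi$ follows immediately.

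The main point to check is one of convention rather than mathematics: I would verify that our definitions of \emph{Hodge isometry} and of an \emph{effective} Hodge isometry coincide with those in \cite{BPV}. In particular, \cite{BPV} record only the cohomology classes of K\"ahler forms, whereas here we carry the forms themselves, so I would confirm that the induced bijection on effective classes and the preservation of positive cones match their formulation, after which the cited theorem applies verbatim. Since no new argument beyond the classical global Torelli theorem and Siu's theorem is required, there is no genuine obstacle; the work is entirely in aligning the two sets of definitions.
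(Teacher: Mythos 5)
Your proposal takes essentially the same approach as the paper: the paper likewise offers no new argument, but observes that a K3 surface is a closed, connected complex surface with $\dim H^1(X;\R)=0$ (in particular even) and, by Siu's theorem \cite[Theorem VIII.14.5]{BPV}, admits a K\"ahler form, so that the positive cone and the notion of effective Hodge isometry are defined, whereupon the statement is just a restatement of \cite[Theorem VIII.11.1]{BPV}. Your additional care in matching the definitions against those of \cite{BPV} is precisely the only point that needs checking, and it is correct.
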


This has the following implication for tame K3 surfaces.

\begin{Corollary}\labell{tamedtorelli1}
Let $(X,I,\sigma)$ and $(X',I',\sigma')$ be 
tame K3 surfaces.  Given a Hodge isometry 
 $\phi \colon H^2(X';\Z) \to H^2(X;\Z)$
such that $\phi\big(\left[\sigma'\right]\big) = [\sigma]$,
there exists a biholomorphism
$g \colon X \to X'$ such that $g^* = \phi$.
\end{Corollary}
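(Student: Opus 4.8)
The plan is to check that the hypotheses of the Torelli theorem (Theorem~\ref{torelli}) are met, that is, to upgrade the given Hodge isometry $\phi$ to an \emph{effective} Hodge isometry. Write $\kappa := [\sigma]$, $\kappa' := [\sigma']$, and let $\Hat\kappa \in H^{1,1}(X;\R)$, $\Hat\kappa' \in H^{1,1}(X';\R)$ denote their orthogonal projections. Since $\phi$ preserves the Hodge decomposition and is an isometry, the orthogonal splitting $H^2(X;\R) = H^{1,1}(X;\R) \oplus (H^{2,0}\oplus H^{0,2})(X)_\R$ is $\phi$-equivariant, so orthogonal projection onto the $(1,1)$-part commutes with $\phi$; hence $\phi(\kappa') = \kappa$ yields $\phi(\Hat\kappa') = \Hat\kappa$.

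First I would verify that $\phi$ preserves the positive cones. By Lemma~\ref{tamedKahler} the projection $\Hat\kappa$ is tamed, so it is represented by a tamed form $\tau$; since a tamed form has positive top power, $(\Hat\kappa,\Hat\kappa) = \int_X \tau\wedge\tau > 0$, and the path of classes joining $\Hat\kappa$ to a K\"ahler class (which exists by Siu) through the convex family of tamed forms stays in \eqref{cones}, so $\Hat\kappa$ lies in the positive cone $\cC_X$; likewise $\Hat\kappa' \in \cC_{X'}$. As $\phi$ restricts to an isometry $H^{1,1}(X';\R) \to H^{1,1}(X;\R)$, it carries the two cones of \eqref{cones} for $X'$ onto those for $X$, and since it sends the point $\Hat\kappa' \in \cC_{X'}$ to $\Hat\kappa \in \cC_X$, it must carry $\cC_{X'}$ onto $\cC_X$.

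Next I would show that $\phi$ restricts to a bijection between the sets of effective classes, which is where tameness does the real work. The key point is that $\sigma$ is positive on holomorphic curves: for any curve $C \subset X$ one has $(\Hat\kappa,[C]) = (\kappa,[C]) = \int_C \sigma > 0$, and similarly on $X'$. Given an irreducible curve $C' \subset X'$, adjunction on a K3 surface gives $(C',C') \geq -2$, so $\phi([C'])$ is a class in the Picard lattice of $X$ with $(\phi[C'],\phi[C']) = (C',C') \geq -2$ and $(\phi[C'],\Hat\kappa) = ([C'],\Hat\kappa') > 0$. Riemann--Roch on $X$ (using $\chi(\mathcal O_X(D)) = 2 + \tfrac12(D,D)$ and Serre duality) forces $\phi([C'])$ or its negative to be effective, and positivity against $\Hat\kappa$ selects $\phi([C'])$. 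Because every effective class is a non-negative integer combination of irreducible curve classes, $\phi$ maps effective classes to effective classes; applying the same argument to the Hodge isometry $\phi^{-1}$, which satisfies $\phi^{-1}(\Hat\kappa) = \Hat\kappa'$, gives the reverse inclusion. Thus $\phi$ is effective, and the Torelli theorem produces the desired biholomorphism $g \colon X \to X'$ with $g^* = \phi$.

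I expect the main obstacle to be this middle step: translating the single equality $\phi(\kappa') = \kappa$ into control over the entire effective cone. The positive-cone step and the concluding appeal to Theorem~\ref{torelli} are comparatively routine, but the effectivity of $\phi$ genuinely requires combining the tameness positivity $\int_C \sigma > 0$ with the Riemann--Roch dichotomy, so I would take care there.
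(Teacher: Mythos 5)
Your proof is correct, and its skeleton matches the paper's: project the tame classes to $H^{1,1}$, use the fact that projection commutes with a Hodge isometry to get $\phi(\widehat\kappa')=\widehat\kappa$, upgrade $\phi$ to an \emph{effective} Hodge isometry, and invoke Theorem~\ref{torelli}. The first half is essentially identical -- the paper likewise uses Lemma~\ref{tamedKahler} plus convexity of the tamed classes in $H^{1,1}(X;\R)$ (together with Siu's theorem) to place the projected class in the positive cone $\cC_X$, and uses positivity of tamed forms on holomorphic curves, i.e.\ $(\widehat\kappa,d)>0$ for effective $d$. The genuine divergence is in how effectivity is concluded: the paper observes that these two facts mean $[\widehat\sigma]$ and $[\widehat\sigma']$ lie in the respective \emph{K\"ahler cones} and then simply cites Theorem VIII.3.10 of \cite{BPV}, whereas you re-prove that black box by hand -- adjunction gives $(C',C')\geq -2$ for irreducible curves, Riemann--Roch with Serre duality on a K3 gives the dichotomy that $d$ or $-d$ is effective whenever $d$ is a nonzero Picard class with $(d,d)\geq -2$, positivity against $\widehat\kappa$ selects the sign, and decomposing effective divisors into irreducible curves plus running the same argument for $\phi^{-1}$ yields the required bijection on effective classes (note your argument correctly also supplies the positive-cone preservation that the paper's definition of ``effective'' demands). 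What each buys: the paper's citation keeps the proof to a few lines, while your inlined version is self-contained and makes transparent exactly where tameness, rather than K\"ahlerness, of $\sigma$ suffices -- which is the whole point of this corollary. Your concluding appeal to Theorem~\ref{torelli} is exactly the paper's.
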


\begin{proof}
By Lemma~\ref{tamedKahler},  there exists tamed symplectic forms $\Hat\sigma \in \Omega^2(X)$
and $\Hat{\sigma}' \in \Omega^2(X')$ so that $[\Hat\sigma]$ and $[\Hat{\sigma}']$ are the
orthogonal projection of $[\sigma]$ and $[\sigma']$ 
onto $H^{1,1}(X;\R)$ and $H^{1,1}(X';\R)$,
respectively.  
Since  
$\phi\big([\sigma']\big) = [\sigma]$ and
$\phi$ is a Hodge isometry, 
 $\phi\big(\left[\Hat{\sigma}'\right]\big) = [\Hat\sigma]$.
Following \cite[\S VIII.3]{BPV}, we let the {\bf K\"ahler cone} of $X$
be the set of $x$ in the positive cone $\cC_X$ 
such that $(x,d) > 0$ for all effective classes $d$.
Since the tamed classes in $H^{1,1}(X;\R)$  form  a convex subcone of the set \eqref{cones}, the class
$[\Hat\sigma]$ must lie in  $\cC_X$.  Moreover, since $\Hat\sigma$ is tamed,
$(\Hat\sigma,d) > 0$ for all effective classes $d$. 
Therefore, $[\Hat\sigma]$ lies in the K\"ahler cone of $X$.
By a similar argument, $[\Hat{\sigma}']$ lies in the K\"ahler cone of $X'$.
Thus, $\phi$ is effective by Theorem VIII.3.10 in \cite{BPV}.
\end{proof}

Consider the complex  manifold
$$\Omega := \left\{ [\alpha] \in \bbP(L \otimes_\Z \C) 
\mid (\alpha, \alpha) = 0 \mbox{ and } (\alpha, \overline{\alpha}) > 0 \right\}.$$
A {\bf marked K3 surface} is a 
K3 surface $(X,I)$ and  an isometry $\phi \colon H^2(X;\Z) \to L$.
Since the canonical bundle of $X$ is trivial,
$H^{2,0}(X) \simeq \C$. 
Thus, there is a well-defined
{\bf period point} associated to $(X,I,\phi)$: 
\begin{equation}\labell{tau1}
\tau_1(X,I,\phi) = [\phi(\alpha_X)] \in \Omega,
\end{equation}
where $\alpha_X \in H^{2,0}(X)$ 
is any non-zero class. 
Additionally, there is a  {\bf refined period point}
associated to each marked tame K3 surface $(X,I,\sigma,\phi)$:
\begin{equation}\labell{tau2}
\tau_2(X,I,\sigma, \phi) = \big(\phi\big([\sigma]\big), \tau_1(X,I,\phi) \big) \in L_\R \times \Omega,
\end{equation}
where $\tau_1$ is  defined as in  \eqref{tau1}.

Marked tame K3 surfaces $(X,I,\sigma,\phi)$ and $(X',I',\sigma',\phi')$
are {\bf isomorphic} if there exists a 
marked biholomorphism $g \colon X \to X'$ 
with $g^* \big( \left[\sigma'\right] \big) = [\sigma]$.
Here, we say that a diffeomorphism
$g \colon X \to X'$ is {\bf marked} if $g^* = \phi\inv \circ \phi'$.
The composition
$\phi\inv \circ \phi' \colon H^2(X';\R) \to H^2(X;\R)$ is a Hodge isometry that takes $[\sigma']$ to $[\sigma]$
exactly if $\tau_2(X,I,\sigma,\phi) = \tau_2(X',I',\sigma',\phi')$. 
Hence,
Corollary~\ref{tamedtorelli1} can be reformulated to show
that  isomorphism classes of marked tame K3 surfaces  are
classified by the image on the refined period map.

\begin{Corollary}\labell{tamedtorelli}
Two marked tame K3 surfaces
$(X,I,\sigma,\phi)$ and $(X',I',\sigma',\phi')$  
are isomorphic exactly if
$$\tau_2(X,I,\sigma,\phi) = \tau_2(X',I',\sigma',\phi').$$
\end{Corollary}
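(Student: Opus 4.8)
The plan is to reduce both directions of the equivalence to Corollary~\ref{tamedtorelli1} by reinterpreting the equality $\tau_2(X,I,\sigma,\phi) = \tau_2(X',I',\sigma',\phi')$ as a statement about the composition $\psi := \phi\inv \circ \phi' \colon H^2(X';\Z) \to H^2(X;\Z)$. Since $\phi$ and $\phi'$ are isometries, $\psi$ is automatically an isometry; the content is to show that $\tau_2$-equality holds if and only if $\psi$ is a \emph{Hodge} isometry carrying $[\sigma']$ to $[\sigma]$. I would establish this equivalence first, and then feed it into the Torelli-type input of Corollary~\ref{tamedtorelli1}.

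To prove the equivalence, I would compare the two coordinates of the refined period point separately. The first coordinate records $\phi\big([\sigma]\big)$, so $\phi\big([\sigma]\big) = \phi'\big([\sigma']\big)$ holds exactly when $\psi\big([\sigma']\big) = \phi\inv\big(\phi'([\sigma'])\big) = [\sigma]$; this matches the symplectic condition. The second coordinate records $\tau_1 = [\phi(\alpha_X)] \in \Omega$, and the key point is that the Hodge structure of a K3 surface is determined by the single line $H^{2,0}(X)$: since $\psi$ is an isometry (hence commutes with complex conjugation and preserves orthogonality), $\psi$ preserves the full Hodge decomposition precisely when $\psi\big(H^{2,0}(X')\big) = H^{2,0}(X)$, because $H^{0,2} = \overline{H^{2,0}}$ and $H^{1,1}$ is the orthogonal complement of $H^{2,0}\oplus H^{0,2}$. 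As $H^{2,0}$ is one-dimensional, this line condition is equivalent to $[\phi'(\alpha_{X'})] = [\phi(\alpha_X)]$ in $\bbP(L\otimes_\Z\C)$, i.e.\ to equality of the second coordinates of $\tau_2$. Combining the two coordinates yields the desired equivalence.

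With the equivalence in hand, both directions follow quickly. For the forward direction, an isomorphism of marked tame K3 surfaces is a marked biholomorphism $g \colon X \to X'$ with $g^*\big([\sigma']\big) = [\sigma]$; being marked means $g^* = \phi\inv \circ \phi' = \psi$, and being a biholomorphism forces $g^*$ to be a Hodge isometry. Thus $\psi$ is a Hodge isometry carrying $[\sigma']$ to $[\sigma]$, so $\tau_2$-equality holds. For the converse, $\tau_2$-equality makes $\psi$ a Hodge isometry with $\psi\big([\sigma']\big) = [\sigma]$, so Corollary~\ref{tamedtorelli1} supplies a biholomorphism $g \colon X \to X'$ with $g^* = \psi = \phi\inv\circ\phi'$; this $g$ is marked by definition and satisfies $g^*\big([\sigma']\big) = [\sigma]$, exhibiting the required isomorphism.

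The only genuinely non-formal step is the second-coordinate computation, where one must verify that an isometry preserving the one-dimensional period line $H^{2,0}$ automatically preserves the entire Hodge decomposition. Everything else is bookkeeping, and the substantive geometric input---the Torelli theorem for tame K3 surfaces---is already packaged in Corollary~\ref{tamedtorelli1}.
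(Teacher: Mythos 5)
Your proposal is correct and follows essentially the same route as the paper: the paper's (implicit) proof is exactly the observation, stated in the paragraph preceding the corollary, that $\phi\inv\circ\phi'$ is a Hodge isometry taking $[\sigma']$ to $[\sigma]$ precisely when the refined period points agree, after which Corollary~\ref{tamedtorelli1} yields the marked biholomorphism. Your only addition is to write out the verification the paper leaves tacit --- that an isometry carrying the line $H^{2,0}(X')$ to $H^{2,0}(X)$ preserves the whole Hodge decomposition, since it commutes with conjugation and $H^{1,1}$ is the orthogonal complement of $H^{2,0}\oplus H^{0,2}$ --- and that verification is correct.
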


Our next goal it to describe the image or the refined period map.
Define manifolds
$$\Tilde{K\Omega} 
:= \left\{(\kappa,[\alpha]) \in L_\R \times \Omega  \ \left| \
(\kappa, \kappa) (\alpha, \overline{\alpha}) > 2|(\kappa,\alpha)|^2 \right. \right\} $$
and
$$ K \Omega := \left\{\big(\kappa, [\alpha]\big) \in L_\R \times \Omega 
\left| \  
(\kappa, \kappa) > 0 \mbox{ and } (\kappa, \alpha) = 0 \right. \right\} \subset \Tilde{K\Omega}.$$

Given $\alpha \in L \otimes_\Z \C$, write
 $\alpha = \Re(\alpha) + \sqrt{-1}\, \Im(\alpha) $,
where $\Re(\alpha), \Im(\alpha) \in L_R := L \otimes_\Z \R$.
Then $[\alpha] \in \Omega$ exactly if
\begin{equation*}
(\Re (\alpha), \Im(\alpha)) = 0 \mbox{ and }
(\Re(\alpha) , \Re(\alpha)) = (\Im(\alpha), \Im(\alpha)) = (\alpha, \overline{\alpha})/2> 0.  
\end{equation*}
Hence, given $\kappa \in L_\R$ and $[\alpha] \in \Omega$, the orthogonal projection 
$\Hat\kappa$ of
$\kappa$ onto $\alpha^\perp \cap L_\R$ is 
\begin{equation}\labell{kappahat}
\Hat\kappa = \kappa -
 \frac{(\kappa,\Re (\alpha))}{(\Re (\alpha), \Re (\alpha))} \Re (\alpha)
 - \frac{ (\kappa,\Im (\alpha))}{( \Im (\alpha), \Im (\alpha))} \Im (\alpha),
\end{equation}
and so
\begin{equation}\labell{kappahat2}
(\Hat\kappa, \Hat\kappa) 
= (\kappa,\kappa) -  \frac{  (\kappa,\Re (\alpha))^2}{(\Re (\alpha), \Re (\alpha))} 
 -  \frac{  (\kappa,\Im (\alpha))^2}{(\Im (\alpha), \Im (\alpha))} 
= (\kappa,\kappa) -  \frac{ 2 |(\kappa,\alpha)|^2}{(\alpha,\overline{\alpha})} 
.
\end{equation}
Therefore,
$\kappa$, $\Re (\alpha)$, and $\Im (\alpha)$ are linearly independent
for every
$(\kappa, [\alpha]) \in \Tilde{K \Omega}$;

Define $\Pi \colon \Tilde{K \Omega} \to G_3^+(L_\R)$
by letting  $\Pi(\kappa,[\alpha])$
be the oriented three plane spanned by the  oriented basis 
$\{ \kappa, \Re (\alpha), \Im (\alpha)\}$.
Define spaces
\begin{gather*}
G_3^+(L_\R)^\circ := \left\{ \left. V \in G_3^+(L_\R) \ \right| \  
d  \not\in  V^\perp \mbox{  for  any } d \in L \mbox{ with } (d,d) = - 2 \right\}, \\
(\Tilde{K\Omega})^\circ := \big\{  (\kappa,[\alpha]) \in \Tilde{K\Omega}\
\ \big| \  \Pi(\kappa,[\alpha]) \in G_3^+(L_\R)^\circ \big\},
\end{gather*}
and
$$
(K\Omega)^\circ := \left\{ \left.(\kappa,[\alpha]) \in K\Omega 
\ \right| \  \Pi(\kappa,[\alpha]) \in G_3^+(L_\R)^\circ \right\} \subset (\Tilde{K \Omega})^\circ
.$$

Given $(\kappa, [\alpha]) \in L_\R \times \Omega$, let $\Hat\kappa$ be the 
orthogonal projection of $\kappa$ onto  $\alpha^\perp \cap L_\R$. 
By \eqref{kappahat} and \eqref{kappahat2},
\begin{gather}\labell{compare}
(\kappa,[\alpha]) \in \Tilde{K \Omega}   \Leftrightarrow  
(\Hat\kappa,[\alpha]) \in K \Omega \mbox{  and } 
(\kappa,[\alpha]) \in (\Tilde{K \Omega})^\circ \Leftrightarrow
(\Hat\kappa,[\alpha]) \in (K \Omega)^\circ.
\end{gather}

Our next step is to prove isomorphism classes
of marked tame K3 surfaces are classified by
$(\Tilde{K \Omega})^\circ$. To do this, we show
that the image of the refined period map
is a subset of $(\Tilde{K \Omega})^\circ$ in Lemma~\ref{image}, and 
that it is surjective in Corollary~\ref{existstamek3}.
In fact, we prove a stronger claim; roughly, the period map is
``surjective for paths".
This reflects the fact that $(K\Omega)^\circ$ does not just classify
isomorphism classes of marked K\"ahler K3 surfaces; it is 
the moduli space marked K\"ahler K3 surfaces. (See the proof
of Theorem~\ref{existsk3}.)

\begin{Lemma}\labell{image}
Let $(X,I,\sigma,\phi)$ be a marked tame K3 surface. Then
$\tau_2(X,I,\sigma,\phi) \in (\Tilde{K\Omega})^\circ;$
moreover, if $\sigma$ is K\"ahler then
$\tau_2(X,I,\sigma,\phi) \in ({K\Omega})^\circ.$
\end{Lemma}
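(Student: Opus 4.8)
The plan is to transport the refined period point $\tau_2(X,I,\sigma,\phi) = (\kappa,[\alpha])$, where $\kappa = \phi\big([\sigma]\big)$ and $[\alpha] = \tau_1(X,I,\phi) = [\phi(\alpha_X)]$, back to $X$ using the fact that $\phi$ is an isometry, and then read off each defining condition of $(\Tilde{K\Omega})^\circ$ as a statement about the tame K3 surface. The first step is the Hodge-theoretic identification $\alpha^\perp \cap L_\R = \phi\big(H^{1,1}(X;\R)\big)$: since the cup-product form pairs $H^{p,q}(X)$ nontrivially only with $H^{2-p,2-q}(X)$, a real class is orthogonal to $\alpha_X \in H^{2,0}(X)$ (equivalently to both $\Re\alpha$ and $\Im\alpha$) exactly when it lies in $H^{1,1}(X;\R)$. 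Consequently the orthogonal projection $\Hat\kappa$ of $\kappa$ onto $\alpha^\perp \cap L_\R$ equals $\phi\big([\Hat\sigma]\big)$, where $[\Hat\sigma]$ is the projection of $[\sigma]$ onto $H^{1,1}(X;\R)$; by Lemma~\ref{tamedKahler} the class $[\Hat\sigma]$ is tamed.

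With this in hand, membership in $\Tilde{K\Omega}$ is immediate. By \eqref{kappahat2} the defining inequality $(\kappa,\kappa)(\alpha,\overline{\alpha}) > 2|(\kappa,\alpha)|^2$ is equivalent to $(\Hat\kappa,\Hat\kappa) > 0$, i.e.\ to $\big([\Hat\sigma],[\Hat\sigma]\big) > 0$. Since the tamed classes in $H^{1,1}(X;\R)$ form a convex subcone of the positive cones \eqref{cones} --- the same fact used in the proof of Corollary~\ref{tamedtorelli1} --- the tamed class $[\Hat\sigma]$ has positive self-intersection, so $(\Hat\kappa,\Hat\kappa) > 0$.

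The main work, and the step I expect to be the crux, is verifying $\Pi(\kappa,[\alpha]) \in G_3^+(L_\R)^\circ$, i.e.\ that no $d \in L$ with $(d,d) = -2$ is orthogonal to the three-plane spanned by $\kappa, \Re\alpha, \Im\alpha$. Such a $d$ corresponds under $\phi$ to an integral class $\delta \in H^2(X;\Z)$ with $(\delta,\delta) = -2$ orthogonal to $\alpha_X$, $\overline{\alpha_X}$, and $[\sigma]$; the first two conditions force $\delta$ into the Picard lattice $H^{1,1}(X)\cap H^2(X;\Z)$. I would rule this out using positivity: by Riemann--Roch on the K3 surface, whose canonical bundle is trivial, $\chi\big(\calO_X(\delta)\big) = 2 + \tfrac12(\delta,\delta) = 1 > 0$, so either $\delta$ or $-\delta$ is effective. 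Since $\delta \in H^{1,1}(X;\R)$ and $[\Hat\sigma]$ is the orthogonal projection of $[\sigma]$, we have $(\delta,[\sigma]) = (\delta,[\Hat\sigma])$; and since $[\Hat\sigma]$ lies in the K\"ahler cone (established in the proof of Corollary~\ref{tamedtorelli1}), it pairs strictly positively with every effective class, so $(\delta,[\Hat\sigma]) \neq 0$. This contradicts $\delta \perp \kappa$, proving $\tau_2(X,I,\sigma,\phi) \in (\Tilde{K\Omega})^\circ$.

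Finally, for the K\"ahler case, a K\"ahler form is a positive real $(1,1)$-form, so $[\sigma] \in H^{1,1}(X;\R)$ and hence $\kappa \in \alpha^\perp \cap L_\R$, i.e.\ $(\kappa,\alpha) = 0$; combined with $(\kappa,\kappa) = (\Hat\kappa,\Hat\kappa) > 0$ from the second step, this places $(\kappa,[\alpha])$ in $K\Omega$, and the $(-2)$-class argument above (or equivalently \eqref{compare}) upgrades this to $(K\Omega)^\circ$.
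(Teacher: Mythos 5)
Your proposal is correct and follows essentially the same route as the paper's proof: transport everything through the isometry $\phi$, use the identification $H^{1,1}(X;\R) = \alpha_X^\perp \cap H^2(X;\R)$ together with Lemma~\ref{tamedKahler} and \eqref{compare} for membership in $\Tilde{K\Omega}$, and rule out orthogonal $(-2)$-classes by observing that such a class lands in the Picard lattice, so $\pm\delta$ is effective and hence pairs nontrivially with the tamed class. The only differences are cosmetic: you re-derive the effectiveness of $\pm\delta$ via Riemann--Roch where the paper cites \cite[Proposition VIII.3.6.i]{BPV}, and you route the positivity $(\delta,[\sigma]) \neq 0$ through the projected class $[\Hat\sigma]$ and the K\"ahler cone, where the paper pairs the tamed form $\sigma$ directly with the effective divisor.
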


\begin{proof}
Note that $H^{1,1}(X;\R) = \alpha_X^\perp \cap H^2(X;\R)$ for any 
non-zero $\alpha_X \in H^{2,0}(X)$. 
Hence, if $\sigma$ is K\"ahler then $[\sigma] \in \alpha_X^\perp$,
and so $\tau_2(X,I,\sigma,\phi)
\in K\Omega$.
Similarly, returning to the general case,
Lemma~\ref{tamedKahler} and \eqref{compare} together imply that
$\tau_2(X,I,\sigma,\phi) \in \Tilde{K \Omega}$.
Now consider $d \in H^2(X;\Z)$ with $(d,d) = -2$.
If $d \in \alpha_X^\perp$, then
since $d \in  H^{1,1}(X) \cap H^2(X;\Z)$
either $d$ or $- d$ is effective
\cite[Proposition VIII.3.6.i]{BPV},
and so $d \not\in [\sigma]^\perp$.
Therefore,   $\tau_2(X,I,\sigma,\phi)$ lies in
$(\Tilde{K\Omega})^\circ$.
\end{proof}

\begin{refTheorem}\labell{existsk3}
Given a smooth path $\gamma \colon [0,1] \to (K \Omega)^\circ$, 
there exist  
marked K\"ahler  K3 surfaces $(X,I_t,\sigma_t, \phi)$\footnote{
As the notation indicates, the complex structure $I_t$ and the K\"ahler
form $\sigma_t$ depend on $t \in [0,1]$, 
but the  manifold $X$ and the isometry $\phi$ do not. 
At this point, we do not insist that $\sigma_t$ depends smoothly
on $t$; see Lemma~\ref{pathforms}.}
satisfying
$$\tau_2(X,I_t,\sigma_t,\phi) = \gamma_t \mbox{ for all }t  \in [0,1];$$
moreover, $I_t$ depends smoothly on $t$.
\end{refTheorem}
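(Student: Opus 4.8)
The plan is to split the construction into two independent parts. Write $\gamma_t = (\kappa_t,[\alpha_t])$, so that $[\alpha_t] \in \Omega$ is the prospective period point and $\kappa_t \in L_\R$ the prospective image of the K\"ahler class. The first task is to construct a single manifold $X$, a single marking $\phi$, and a smooth family of complex structures $I_t$ on $X$ with $\tau_1(X,I_t,\phi) = [\alpha_t]$ for all $t$; the second is, for each $t$, to produce a K\"ahler form $\sigma_t$ on $(X,I_t)$ with $\phi\big([\sigma_t]\big) = \kappa_t$. Observe that the constraint $(\kappa_t,\alpha_t) = 0$ defining $K\Omega$ is exactly the statement that $\phi\inv(\kappa_t)$ is of type $(1,1)$, while $(\kappa_t,\kappa_t) > 0$ places it in the positive part; thus the second task amounts to showing that $\phi\inv(\kappa_t)$ lies in the K\"ahler cone of $(X,I_t)$.

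First I would construct the family $I_t$. Fix a K3 surface $(X,I_0)$ realizing the period point $[\alpha_0]$, which exists by surjectivity of the period map \cite[Theorem VIII.14.1]{BPV}; choosing an isometry $H^2(X;\Z) \to L$ appropriately gives a marking $\phi$ with $\tau_1(X,I_0,\phi) = [\alpha_0]$. By the local Torelli theorem \cite{BPV}, the period map from the smooth, $20$-dimensional Kuranishi deformation space of a K3 surface to $\Omega$ is a local biholomorphism; hence the map sending a complex structure on $X$ (with its fixed marking) to its period point is a local homeomorphism, and I would lift the path $t \mapsto [\alpha_t]$ through it. Covering $[0,1]$ by finitely many subintervals, on each I obtain a smooth family of complex structures from a Kuranishi slice realizing the corresponding arc of period points; on overlaps two such families have equal period points, so by Theorem~\ref{torelli} the corresponding marked surfaces are related by a marked biholomorphism, which --- because we work over the locus $(K\Omega)^\circ$ --- can be taken isotopic to the identity and used to glue the local families into one smooth family $I_t$ on $X$ carrying the fixed marking $\phi$.

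It remains to produce $\sigma_t$. Since $\phi$ is a marking of $(X,I_t)$ and $(\kappa_t,\alpha_t) = 0$, the class $\phi\inv(\kappa_t)$ lies in $\alpha_X^\perp \cap H^2(X;\R) = H^{1,1}(X;\R)$ for a nonzero $\alpha_X \in H^{2,0}(X,I_t)$; as $(\kappa_t,\kappa_t) > 0$ it lies in the set \eqref{cones}, hence in one of its two cones, and by continuity along the connected path this is consistently the positive cone $\cC_X$. The condition $\Pi(\gamma_t) \in G_3^+(L_\R)^\circ$ means that no $d \in L$ with $(d,d) = -2$ is orthogonal to $\kappa_t$; translated through $\phi$, no $(-2)$-class of $(X,I_t)$ is orthogonal to $\phi\inv(\kappa_t)$, so $\phi\inv(\kappa_t)$ avoids every wall of the K\"ahler cone. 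Therefore, by the description of the K\"ahler cone in \cite[\S VIII.3]{BPV}, $\phi\inv(\kappa_t)$ is a K\"ahler class, and I take $\sigma_t$ to be any K\"ahler form representing it; then $\tau_2(X,I_t,\sigma_t,\phi) = (\kappa_t,[\alpha_t]) = \gamma_t$.

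The main obstacle is the construction of the \emph{global} smooth family $I_t$. The period map from marked K3 surfaces to $\Omega$ is \'etale, but its source is non-Hausdorff, so naive path lifting is ill-defined. The restriction to $(K\Omega)^\circ$ is designed precisely to remove the $(-2)$-classes responsible for this non-separatedness, making the lift well-defined and allowing the local Kuranishi families to be glued by diffeomorphisms isotopic to the identity. Managing this gluing --- keeping the marking $\phi$ and the underlying manifold $X$ fixed throughout while retaining smoothness in $t$ --- is the technical heart of the argument; by contrast, the verification that each $\phi\inv(\kappa_t)$ is K\"ahler is a pointwise statement requiring no smoothness, which is consistent with the footnote deferring the smooth dependence of $\sigma_t$ to Lemma~\ref{pathforms}.
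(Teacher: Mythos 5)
There is a genuine gap, and it lies exactly where you locate the ``technical heart'': your two-step decomposition --- first lift the path $t \mapsto [\alpha_t]$ through $\tau_1$ to get $I_t$, then find $\sigma_t$ pointwise --- is precisely what cannot be done. Your claim that the restriction to $(K\Omega)^\circ$ ``removes the $(-2)$-classes responsible for the non-separatedness'' misreads the definition: $(K\Omega)^\circ$ only excludes $(-2)$-classes $d$ orthogonal to the whole $3$-plane $\Pi(\kappa_t,[\alpha_t])$, i.e.\ orthogonal to \emph{both} $\kappa_t$ and $\alpha_t$. Classes $d$ with $(d,\alpha_t) = 0$ but $(d,\kappa_t) \neq 0$ are allowed, and these are exactly the type-$(1,1)$ $(-2)$-classes that produce both the non-Hausdorff twin points of $\calM_1$ over $[\alpha_t]$ and the chamber structure of the positive cone. (Indeed, in Lemma~\ref{classifymk3} the theorem must be applied to paths whose endpoints are periods of tame K3 surfaces that may well carry $(-2)$-curves, so you cannot even perturb away from this locus.) Consequently your lift of $[\alpha_t]$ through $\tau_1$ branches: at an overlap, the two local Kuranishi families can be non-separated twins differing by the reflection in a $(1,1)$ $(-2)$-class, and then the identity Hodge isometry between them is \emph{not} effective, so Theorem~\ref{torelli} provides no marked biholomorphism and the gluing fails. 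The downstream step inherits the same flaw: avoiding every wall, i.e.\ $(\kappa_t,d) \neq 0$ for all $(1,1)$ $(-2)$-classes $d$, only places $\phi\inv(\kappa_t)$ in the interior of \emph{some} chamber of the positive cone; it is K\"ahler only if $(\kappa_t,d) > 0$ for all \emph{effective} $d$, and whether that holds depends on which branch $I_t$ you chose. (Concretely: reflecting a K\"ahler class in the class of a $(-2)$-curve yields a class in the positive cone avoiding all walls that is not K\"ahler.)

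The paper's proof sidesteps all of this by never separating the two pieces of data. It works with the moduli space $\calM_2$ of marked K3 surfaces \emph{together with} a K\"ahler class, and uses that the refined period map $\tau_2 \colon \calM_2 \to (K\Omega)^\circ$ is injective \cite[Theorem VIII.12.3]{BPV} and surjective \cite[Theorem VIII.14.1]{BPV}, hence a diffeomorphism. One then transports the path by $\tau_2\inv \circ \gamma$ --- no path lifting, no branch choices, no gluing --- and reads off $I_t$ from $\pi \circ \tau_2\inv \circ \gamma$ in $\calM_1$ and $\sigma_t$ from the fiber of $\calM_2' \to \calM_1$. To repair your argument you would have to lift $(\kappa_t,[\alpha_t])$ \emph{jointly}, which is to say you would have to reprove that $\tau_2$ is a bijection onto $(K\Omega)^\circ$; at that point you have reconstructed the paper's proof.
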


\begin{proof}
By Theorem VIII.12.1 in \cite{BPV}, there exists a universal marked family  
$Y \stackrel{p}{\to} \calM_1$ of K3-surfaces. 
In particular,  given $x \in \calM_1$, the preimage $Y_s := p\inv(s)$ is a marked K3 surface.
The base space  $\calM_1$ 
is not Hausdorff,  but otherwise is a smooth analytic space of dimension $20$.
The period map $\tau_1 \colon \calM_1 \to \Omega$
is a local isomorphism by  \cite[Theorem VIII.7.3]{BPV}.

By Lemma VIII.9.3 in \cite{BPV}, 
$\calM'_2 := \cup_{s \in \calM_1} H^{1,1}(Y_s) \stackrel{\pi}{\to} \calM_1$
is a real analytic vector bundle.  
As described in \cite[\S VIII.12]{BPV},
the set of K\"ahler classes $\calM_2$ in $\calM_2'$
is an open submanifold.
Thus, $\calM_2$ is  real analytic manifold of dimension $60$,
and  the refined period map 
$\tau_2 \colon \calM_2 \to (K \Omega)^\circ$ is a real analytic submersion.
Since $\tau_2$ is both injective \cite[Theorem VIII.12.3]{BPV}
and surjective \cite[Theorem VIII.14.1]{BPV}, 
it induces a diffeomorphism from $M_2$ to $(K\Omega)^\circ$.

Given a smooth path $\gamma \colon [0,1] \to (K\Omega)^\circ$,
let $\gamma_t = (\kappa_t,[\alpha_t]).$
Since $\pi \circ \tau_2\inv \circ \gamma$ is a smooth path in $\calM_1$,
there exists marked K3 surfaces $(X,I_t,\phi)$ satisfying
$\tau_1(X,I_t,\phi) = [\alpha_t] \in \Omega$ for all $t \in [0,1]$.
Since $\tau_2 \inv \circ \gamma$ is a path in $\calM_2$,
there exists a K\"ahler form $\sigma_t \in \Omega^2(X)$ with
$\phi[\sigma_t] = \kappa_t$ for all $t \in [0,1]$.
\end{proof}

This has the following implication for tame K3 surfaces.

\begin{Corollary}\labell{existstamek3}
Given a smooth path $\gamma \colon [0,1] \to (\Tilde{K \Omega})^\circ$, there exist 
marked tame K3 surfaces $(X,I_t,\sigma_t,\phi)$  with
$$\tau_2(X,I_t,\sigma_t,\phi) = \gamma_t \mbox{  for all } t \in [0,1];$$
moreover, $I_t$ depends smoothly on $t$.
\end{Corollary}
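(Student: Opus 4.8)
The plan is to reduce to the K\"ahler case handled in Theorem~\ref{existsk3} by orthogonally projecting the given path into $(K\Omega)^\circ$, and then to promote the resulting K\"ahler forms to tamed forms lying in the prescribed (unprojected) cohomology classes using Lemma~\ref{tamedKahler}.

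First I would write $\gamma_t = (\kappa_t,[\alpha_t])$ and choose a smooth lift $\alpha_t \in L \otimes_\Z \C$ of the projective path $[\alpha_t] \in \Omega$; such a lift exists because $[0,1]$ is contractible. For each $t$, let $\Hat\kappa_t$ be the orthogonal projection of $\kappa_t$ onto $\alpha_t^\perp \cap L_\R$. By \eqref{kappahat} this is given by a formula that is smooth in $(\kappa_t,\alpha_t)$, since the denominators $(\Re(\alpha_t),\Re(\alpha_t))$ and $(\Im(\alpha_t),\Im(\alpha_t))$ are positive for $[\alpha_t] \in \Omega$; and it depends only on $[\alpha_t]$, not on the chosen lift, because rescaling $\alpha_t$ by a complex number preserves the real plane spanned by $\Re(\alpha_t)$ and $\Im(\alpha_t)$ and hence preserves $\alpha_t^\perp \cap L_\R$. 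Thus $\Hat\gamma_t := (\Hat\kappa_t,[\alpha_t])$ is a smooth path in $L_\R \times \Omega$, and by \eqref{compare} it lies in $(K\Omega)^\circ$.

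Next I would apply Theorem~\ref{existsk3} to $\Hat\gamma$ to obtain marked K\"ahler K3 surfaces $(X,I_t,\Hat\sigma_t,\phi)$ with $\tau_2(X,I_t,\Hat\sigma_t,\phi) = \Hat\gamma_t$ for all $t$, where $I_t$ depends smoothly on $t$; in particular $\phi\inv(\Hat\kappa_t) = [\Hat\sigma_t]$ and $\tau_1(X,I_t,\phi) = [\alpha_t]$. Since the latter means that $\phi$ carries $H^{2,0}(X)$ onto $\C \alpha_t$, the observation in the proof of Lemma~\ref{image} that $H^{1,1}(X;\R) = \alpha_X^\perp \cap H^2(X;\R)$ shows that $\phi$ carries $H^{1,1}(X;\R)$ onto $\alpha_t^\perp \cap L_\R$. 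Because $\phi$ is an isometry, orthogonal projection commutes with it, so the orthogonal projection of $\phi\inv(\kappa_t)$ onto $H^{1,1}(X;\R)$ equals $\phi\inv(\Hat\kappa_t) = [\Hat\sigma_t]$, which is a K\"ahler, and hence tamed, class of $(X,I_t)$.

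Finally, Lemma~\ref{tamedKahler} then shows that $\phi\inv(\kappa_t)$ itself is tamed, so for each $t$ there is a tamed form $\sigma_t \in \Omega^2(X)$ with $\phi([\sigma_t]) = \kappa_t$. Each triple $(X,I_t,\sigma_t)$ is a tame K3 surface marked by $\phi$, with refined period point $\tau_2(X,I_t,\sigma_t,\phi) = \big(\phi([\sigma_t]),\tau_1(X,I_t,\phi)\big) = (\kappa_t,[\alpha_t]) = \gamma_t$, and $I_t$ is smooth in $t$ by Theorem~\ref{existsk3}, as required. As in Theorem~\ref{existsk3}, no claim is made that $\sigma_t$ varies smoothly, which is precisely why only the existence statement of Lemma~\ref{tamedKahler} is needed. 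The only point demanding genuine care is the smoothness and lift-independence of the projected path $\Hat\gamma$; once that is settled, the argument is a direct transfer of the K\"ahler result through the projection formulas \eqref{kappahat}--\eqref{kappahat2} and Lemma~\ref{tamedKahler}.
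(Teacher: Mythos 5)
Your proposal is correct and takes essentially the same route as the paper's own proof: project $\gamma$ to a smooth path $\Hat\gamma_t = (\Hat\kappa_t,[\alpha_t])$ in $(K\Omega)^\circ$ via \eqref{compare}, apply Theorem~\ref{existsk3} to produce marked K\"ahler K3 surfaces along $\Hat\gamma$, and then use Lemma~\ref{tamedKahler} to replace each K\"ahler class by a tamed form in the unprojected class $\phi\inv(\kappa_t)$. The extra details you verify (lift-independence and smoothness of the projection, and that the isometry $\phi$ intertwines projection onto $H^{1,1}(X;\R)$ with projection onto $\alpha_t^\perp \cap L_\R$) are exactly what the paper leaves implicit in its closing appeal to Lemma~\ref{tamedKahler}.
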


\begin{proof}
Let $\Hat\gamma_t = (\Hat\kappa_t, [\alpha_t])$ for all $t \in [0,1]$,
where $\gamma_t = (\kappa_t,[\alpha_t])$ and
$\Hat\kappa_t$ is the projection of $\kappa_t$ onto $\alpha_t^\perp \cap L_\R$.
Then 
$\Hat\gamma_t$
is a smooth path in $(K \Omega)^\circ$ 
by \eqref{compare}.  
By Theorem~\ref{existsk3}, there exists marked K\"ahler K3 surfaces $(X,I_t,\Hat\sigma_t,\phi)$
satisfying $\tau_2(X,I_t,\sigma_t, \phi) = (\Hat\kappa_t, [\alpha_t])$ for all $t \in [0,1]$;
moreover, $I_t$ depends smoothly on $t$.
Hence, the claim follows by Lemma~\ref{tamedKahler}.

\end{proof}

We have now classified marked tame
K3 surfaces up to marked biholomorphism that preserve the tame class.
To classify them up to marked symplectomorphism, we need to
study the topology of $(\Tilde{K \Omega})^\circ$; c.f. \cite[\S VIII.9]{BPV}.

\begin{Lemma} \labell{TildeKOmega} 
Fix $\kappa \in L_\R$ with $(\kappa, \kappa) > 0$.  
\begin{enumerate}
\item
The subspace $(\Tilde{K \Omega})^\circ \subset \Tilde{K \Omega}$ is an open submanifold with two connected components that
$\Pi$ take  to different components of $G_3^+(L_\R)$.
\item
The intersection $\left( \{\kappa\} \times \Omega  \right) \cap (\Tilde{K \Omega})^\circ$
is a submanifold with two connected components
that $\Pi$ take  to different components of $G_3^+(L_\R)$.
\end{enumerate}
\end{Lemma}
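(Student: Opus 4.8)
The plan is to understand both statements via the projection $\Pi$ and the single obstruction built into the ``$\circ$'' decoration, namely the condition that the three-plane avoids the orthogonal complements of roots $d \in L$ with $(d,d) = -2$. I would first establish the topology of the target. The manifold $G_3^+(L_\R)$ of oriented positive-definite three-planes has exactly two connected components (as already noted in the excerpt, immediately before Proposition~\ref{classifyk3}). Removing the ``bad'' locus where $V^\perp$ contains a root gives $G_3^+(L_\R)^\circ$; the first thing to check is that this removal does \emph{not} disconnect either component. For a fixed root $d$, the condition $d \in V^\perp$ says $V \subset d^\perp$, and since $d^\perp$ has signature $(3,18)$ this is a positive-codimension (in fact real codimension $3$) real-analytic subvariety of $G_3^+(L_\R)$; the bad locus is a locally finite union of such subvarieties (local finiteness because a positive-definite three-plane can be orthogonal to only finitely many roots in a neighborhood, as $(d,d)=-2$ forces the roots meeting a compact family to be bounded in the negative-definite part). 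Removing a locally finite union of codimension-$\geq 2$ real-analytic subvarieties from a connected manifold leaves it connected, so $G_3^+(L_\R)^\circ$ still has exactly two components, one in each component of $G_3^+(L_\R)$.

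With that in hand, part (1) follows by analyzing the fibers of $\Pi \colon (\Tilde{K\Omega})^\circ \to G_3^+(L_\R)^\circ$. First, $(\Tilde{K\Omega})^\circ$ is open in $\Tilde{K\Omega}$ by definition (it is the preimage under the continuous map $\Pi$ of the open set $G_3^+(L_\R)^\circ$), and $\Tilde{K\Omega}$ is an open submanifold of $L_\R \times \Omega$ because it is cut out by the \emph{strict} inequality $(\kappa,\kappa)(\alpha,\overline\alpha) > 2|(\kappa,\alpha)|^2$. I would then show that $\Pi$ is a fiber bundle (or at least a fibration) with connected fibers. Given an oriented three-plane $V = \Pi(\kappa,[\alpha])$ spanned in order by $\{\kappa, \Re\alpha, \Im\alpha\}$, the fiber over $V$ consists of all ways to write $V$ with such an oriented basis subject to the constraints defining $\Omega$ (namely $\Re\alpha \perp \Im\alpha$ and $(\Re\alpha,\Re\alpha)=(\Im\alpha,\Im\alpha)>0$) and the openness constraint; this fiber is a homogeneous space for the orientation-preserving linear maps of $V$ and is connected (it retracts onto a choice of oriented orthonormal-type frame, parametrized by a connected group such as $GL^+(3,\R)$ or a quotient thereof). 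Since the base $G_3^+(L_\R)^\circ$ has two components and the fibers are connected, the long exact sequence of the fibration (or simply the bundle structure) gives that $(\Tilde{K\Omega})^\circ$ has exactly two components, and $\Pi$ sends each to a distinct component of $G_3^+(L_\R)$.

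For part (2), I would run essentially the same argument but restricted to the slice $\{\kappa\}\times\Omega$. Here $\kappa$ is fixed with $(\kappa,\kappa)>0$, so the varying data is only $[\alpha]\in\Omega$, and $\Pi$ restricts to a map $\bigl(\{\kappa\}\times\Omega\bigr)\cap(\Tilde{K\Omega})^\circ \to G_3^+(L_\R)^\circ$. The key point is that the oriented plane $V$ is now built from the \emph{fixed} first vector $\kappa$ together with the two-plane $\langle\Re\alpha,\Im\alpha\rangle$; the assignment $[\alpha]\mapsto V$ has fibers given by the choices of positively-oriented orthogonal pairs spanning a given two-plane complementary to $\R\kappa$ inside $V$, which is again connected (a circle's worth of rotations, giving $S^1$, or its relevant connected model). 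So again the source is a fibration over the two-component base $G_3^+(L_\R)^\circ$ with connected fibers, yielding two components mapping to the two components of $G_3^+(L_\R)$.

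The main obstacle, and the step I would spend the most care on, is the connectivity of $G_3^+(L_\R)^\circ$: verifying that the root hyperplanes form a \emph{locally finite} family of real-codimension-$\geq 2$ subsets, so that their removal preserves connectivity of each component. The local finiteness requires the observation that on a compact subset of $G_3^+(L_\R)$ the positive-definite three-plane $V$ bounds the positive part of any $d$ with $V\subset d^\perp$, forcing $d$ into the negative-definite complement where $(d,d)=-2$ admits only finitely many lattice solutions in a bounded region; this is exactly the kind of discreteness argument used in \cite[\S VIII.9]{BPV}, which I would invoke. The fibration statements for $\Pi$ are then relatively routine once the base is understood, reducing to identifying the fibers with connected frame spaces and checking local triviality.
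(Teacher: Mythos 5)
Your treatment of part (1) is essentially the paper's own argument: identify $\Omega$ with $G_2^+(L_\R)$, exhibit $\Pi \colon \Tilde{K\Omega} \to G_3^+(L_\R)$ as a fiber bundle with connected fibers, and check that $G_3^+(L_\R)^\circ$ keeps exactly two components because the removed loci $G_3^+(d^\perp)$ form a locally finite family of closed codimension-$3$ submanifolds (the paper, like you, takes local finiteness from the proof of \cite[Corollary VIII.9.2]{BPV}). That part is sound.

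Part (2), however, contains a genuine gap. You assert that the restriction of $\Pi$ to $\left(\{\kappa\}\times\Omega\right)\cap(\Tilde{K\Omega})^\circ$ is ``again a fibration over the two-component base $G_3^+(L_\R)^\circ$ with connected fibers.'' It is not: every point of the slice maps to a $3$-plane \emph{containing} $\kappa$, so the image is only $\{V\in G_3^+(L_\R)^\circ \mid \kappa\in V\}$, a proper sub-Grassmannian, and the component count of \emph{that} base is the entire content of part (2). Two facts are needed there, neither visible in your setup. First, $\{V\in G_3^+(L_\R)\mid \kappa\in V\}$ has exactly two components, taken to different components of $G_3^+(L_\R)$; the paper obtains this from the natural diffeomorphism with $G_2^+(\kappa^\perp)$, where $\kappa^\perp$ has signature $(2,19)$. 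Second — the delicate point — the codimension of the bad loci \emph{drops} inside the slice: $\{V\mid\kappa\in V\}\cap G_3^+(d^\perp)$ is empty when $(\kappa,d)\neq 0$, but when $(\kappa,d)=0$ it is diffeomorphic to $G_2^+(\kappa^\perp\cap d^\perp)$, a closed submanifold of codimension $2$, not $3$. So connectivity after removal is not inherited from your part-(1) computation; the argument must be rerun with the weaker bound (codimension $\geq 2$ plus local finiteness still suffices, but it must be checked). Your identification of the slice fibers (``a circle's worth of rotations'') is also off — for fixed $\kappa\in V$ the fiber is the $2$-disk of oriented $2$-planes $W\subset V$ with $\kappa\notin W$ inducing the given orientation — though since a disk is connected this slip does not affect the conclusion; the missing analysis of the base does.
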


\begin{proof}
The map from $\Omega$  to $G_2^+(L_\R)$ that sends
$[\alpha]$ to the oriented $2$-plane with oriented basis $\{ \Re \alpha, \Im \alpha \}$ is a 
diffeomorphism, and so we may identify these spaces.  
By Lemma~\ref{compare}, 
under this identification 
$\Tilde{K \Omega}$ is the set  
of pairs $(\kappa,W) \in L_\R \times G_2^+(L_\R)$
that together span a $3$-plane on which $( \cdot, \cdot)$ is positive definite.
Given $V \in G_3^+(L_\R)$, 
the preimage $\Pi\inv(V) \subset \Tilde{K\Omega}$ consists of 
$(\kappa, W) \in V \times G_2^+(V)$ with $\kappa  \not\in W$ such that
$\kappa$ and $W$ together induce the given orientation on $V$.
If we fix a non-zero $\kappa \in V$,
the set of such $W$ is diffeomorphic to a $2$-dimensional disk.
Thus $\Tilde{K \Omega}$ is a fiber bundle over $G_3^+(L_\R)$
with connected fibers.

We obtain $G_3^+(L_\R)^\circ$ from $G_3^+(L_\R)$ by removing 
$G_3^+(d^\perp)$ for all $d \in L$ with $(d,d) = -2$. 
Given any  $d \in L$, the set $G_3^+(d^\perp)$ is a closed submanifold
of $G_3^+(L_\R)$ of  codimension $3$.
Moreover, by the proof of \cite[Corollary VIII.9.2]{BPV}, 
the collection of $G_3^+(d^\perp)$ for $d \in L$ with $(d,d) = -2$ is locally
finite.
Therefore, $G_3^+(L_\R)^\circ$ is an open submanifold 
with two connected components, one in each component of $G_3^+(L_\R)$.
The first claim follows immediately.

Now fix  $\kappa \in L_\R$ with $(\kappa,\kappa) > 0$.
Given $W \in G_2^+(L_\R)$ with $(\kappa,W) \in \Tilde{K\Omega}$,
it is clear that $\kappa \in \Pi(\kappa,W)$.
Conversely, as we saw above, if $V \in G_3^+(L_\R)$
contains $\kappa$ then
$$ \big\{  W \in G_2^+(L_\R) 
\ \big| \ 
(\kappa, W) \in   \Tilde{K\Omega}  \mbox{ and }
\Pi(\kappa,W) = V \big\}$$
is diffeomorphic to a $2$-dimensional disk.
Thus,  $\left(\{\kappa\} \times G_2^+(L_\R)\right) \cap  \Tilde{K \Omega}$ is a fiber bundle over
 $\{V \in G^+_3(L) \mid \kappa \in V \}$ with contractible fibers.

Since the signature of $\kappa^\perp$ is $(2,19)$, the manifold
$G_2^+(\kappa^\perp)$ has two components.
Moreover, the natural diffeomorphism from $G_2^+(\kappa^\perp)$
to $\{ V \in G_3^+(L_\R) \mid \kappa \in V\}$ takes these components to different
components of $G_3^+(L_\R)$.
Given $d \in L$, the intersection
$$\{ V \in G_3^+(L_\R) \mid \kappa \in V\} \cap G_3^+(d^\perp)$$ 
is empty if $(\kappa,d) \neq 0$, and is
naturally diffeomorphic to $G_2^+(\kappa^\perp \cap d^\perp)$ if $(\kappa,d) = 0$.
Therefore, it is a closed submanifold of codimension $2$.
Since we have already seen that the collection of $G_3^+(d^\perp)$ for $d \in L$ with $(d,d) = -2$ is a locally finite collection,
this implies that $\{V \in G_3^+(L_\R)^\circ \mid \kappa \in V\}$
is a submanifold whose intersection with either component of $G_3^+(L_\R)$
has one connected component.
The second claim follows immediately.

\end{proof}

We also need a few general lemmas about tamed symplectic forms.

\begin{Lemma}\labell{biholomorphic}
Let $(X,I)$ and $(X',I')$ be closed complex manifolds
with tamed symplectic forms
$\sigma \in \Omega^2(X)$ and $\sigma' \in \Omega^2(X')$.
Given a biholomorphism $g$ from $(X,I)$ to $(X',I')$ with 
$g^*([\sigma']) = [\sigma]$,
there is a symplectomorphism $f$ from $(X,\sigma)$  to $(X',\sigma')$
with $f^* = g^* \colon H^*(X';\Z) \to H^*(X;\Z)$.
\end{Lemma}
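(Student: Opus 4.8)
The plan is to reduce the statement to a Moser-type stability argument. First I would pull the form $\sigma'$ back along the biholomorphism $g$, setting $\tau := g^* \sigma'$. Since $g$ is holomorphic, its differential intertwines $I$ and $I'$, so for every nonzero tangent vector $v$ of $X$ we have $\tau(v, I(v)) = \sigma'\big(g_* v, I'(g_* v)\big) > 0$; that is, $\tau$ is a symplectic form tamed by $I$. Moreover $[\tau] = g^*[\sigma'] = [\sigma]$ by hypothesis. Thus $\sigma$ and $\tau$ are two $I$-tamed symplectic forms on $(X,I)$ in the same de Rham cohomology class, and it suffices to produce a diffeomorphism $h$ of $X$, lying in the identity component of the diffeomorphism group, with $h^* \tau = \sigma$: then $f := g \circ h$ satisfies $f^* \sigma' = h^* \tau = \sigma$ and, since $h^*$ acts trivially on cohomology, $f^* = h^* \circ g^* = g^*$ on $H^*(X';\Z)$.

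Second, I would connect $\sigma$ to $\tau$ by the straight-line path $\sigma_s := (1-s)\sigma + s\tau$ for $s \in [0,1]$. The key point is that tameness by a fixed almost complex structure is a convex condition: since $\sigma(v,I(v)) > 0$ and $\tau(v, I(v)) > 0$ for all nonzero $v$, we have $\sigma_s(v, I(v)) > 0$ as well, so each $\sigma_s$ is tamed by $I$ and in particular nondegenerate. As $\sigma$ and $\tau$ are closed and cohomologous, $\sigma_s$ is a smooth family of symplectic forms with $[\sigma_s] = [\sigma]$ constant in $s$.

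Third, I would run the Moser argument. Write $\tau - \sigma = d\mu$ for some $\mu \in \Omega^1(X)$, and define a time-dependent vector field $V_s$ by $\iota_{V_s}\sigma_s = -\mu$, which is possible and unique because $\sigma_s$ is nondegenerate. Since $X$ is closed, $V_s$ integrates to an isotopy $h_s$ with $h_0 = \id$; using $\mathcal{L}_{V_s}\sigma_s = d\iota_{V_s}\sigma_s = -d\mu$ and $\tfrac{d}{ds}\sigma_s = d\mu$, Cartan's formula gives $\tfrac{d}{ds}\big(h_s^* \sigma_s\big) = h_s^*\big(\mathcal{L}_{V_s}\sigma_s + \tfrac{d}{ds}\sigma_s\big) = 0$, so that $h_1^*\tau = h_1^*\sigma_1 = \sigma_0 = \sigma$. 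Taking $h := h_1$ completes the construction, since $h$ is isotopic to the identity and hence acts as the identity on $H^*(X;\Z)$.

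I expect no serious obstacle here; the lemma is the tame analogue of the standard fact that cohomologous symplectic forms joined by a path of symplectic forms are related by a symplectomorphism. The only point requiring care is the observation that $I$-tameness is both preserved under pullback by the biholomorphism $g$ and stable under convex combinations, which is precisely what guarantees that the entire Moser path $\sigma_s$ stays nondegenerate; without tameness one would have to verify nondegeneracy along the path by hand.
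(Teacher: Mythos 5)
Your proposal is correct and follows essentially the same route as the paper: the paper's proof likewise observes that $I$ tames both $\sigma$ and $g^*(\sigma')$, so the convex combination $t\, g^*(\sigma') + (1-t)\sigma$ is a path of tamed (hence symplectic) cohomologous forms, and then invokes Moser's method. Your write-up simply fills in the Moser details and the cohomological verification that the paper leaves implicit.
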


\begin{proof}
The complex structure $I$ tames  $g^*(\sigma')$ and $\sigma$, 
and so
$\sigma_t := t g^*(\sigma') + (1-t) \sigma$ is a tamed symplectic form
for all $t \in [0,1]$. 
Thus, since $[g^*(\sigma')] = [\sigma]$ by assumption,
the  claim follows by Moser's method.
\end{proof}

\begin{Lemma}\labell{pathforms}
Let $(X,I_t)$ be a closed complex manifold with  tamed symplectic 
form $\sigma_t \in \Omega^2(X)$
for all $t \in [0,1]$. Assume that $I_t$ depends smoothly on $t$,
and that $[\sigma_t]  \in H^2(X;\R)$ is independent of $t$. 
Then there is a symplectomorphism  from $(X,\sigma_0)$ to $(X,\sigma_1)$ 
that induces the identity map on $H^*(X;\Z)$.
\end{Lemma}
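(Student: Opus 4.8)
The plan is to exploit the fact that the only difficulty here—namely, that $\sigma_t$ is \emph{not} assumed to vary smoothly in $t$—is an artifact of the varying complex structure, and to eliminate it by a local-to-global argument anchored on the smoothly varying $I_t$. I would call two parameters $t, t' \in [0,1]$ equivalent if $(X,\sigma_t)$ and $(X,\sigma_{t'})$ are symplectomorphic via a diffeomorphism inducing the identity on $H^*(X;\Z)$. This is visibly an equivalence relation: such symplectomorphisms compose and invert while preserving the identity-on-cohomology condition, and the identity map handles reflexivity. It then suffices to show that each equivalence class is open in $[0,1]$; since the complement of a class is a union of other (open) classes, each class is also closed, and connectedness of $[0,1]$ forces a single class, whence $0$ and $1$ are equivalent.

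For the local step, fix $t_0 \in [0,1]$. Because taming is an open condition on the complex structure and $I_t$ depends continuously on $t$, there is a $\delta > 0$ so that $\sigma_{t_0}$ tames $I_t$ whenever $|t - t_0| < \delta$. For such $t$, both $\sigma_{t_0}$ and $\sigma_t$ tame $I_t$; since the set of forms taming a fixed complex structure is convex, the path $s \mapsto s \sigma_t + (1-s)\sigma_{t_0}$ consists of nondegenerate closed forms, hence of symplectic forms, and these all lie in one cohomology class because $[\sigma_t]$ is independent of $t$ by hypothesis. Moser's method then produces an isotopy starting at $\id$ whose time-one map is a symplectomorphism from $(X,\sigma_{t_0})$ to $(X,\sigma_t)$; being isotopic to the identity, it induces the identity on $H^*(X;\Z)$. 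Thus every $t$ with $|t - t_0| < \delta$ is equivalent to $t_0$, which establishes openness.

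The one genuinely delicate point—and the reason to isolate this lemma rather than to invoke Moser directly on the family $\{\sigma_t\}$—is precisely that $\{\sigma_t\}$ may fail to be smooth in $t$, so the usual Moser argument does not apply to it verbatim. The device above sidesteps this: at each parameter one fixes the \emph{target} complex structure $I_t$ and linearly interpolates the two forms taming it, producing a smooth path of cohomologous symplectic forms to which Moser does apply, while the smoothness of $I_t$ supplies exactly the openness of the taming condition needed for these local interpolations to exist. Once openness of the equivalence classes is in hand, compactness and connectedness of $[0,1]$ deliver the desired symplectomorphism from $(X,\sigma_0)$ to $(X,\sigma_1)$ inducing the identity on $H^*(X;\Z)$.
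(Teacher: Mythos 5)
Your argument is correct. The three essential ingredients are the same as in the paper's proof: openness of the taming condition in $t$ (which, as you should note, uses not only smooth dependence of $I_t$ but compactness of $X$, to get uniform positivity of $\sigma_{t_0}(v,I_t(v))$ over the unit sphere bundle --- the lemma's hypothesis that $X$ is closed is what makes this work), convexity of the cone of forms tamed by a fixed $I_t$, and Moser's method on a smooth path of cohomologous symplectic forms. Where you differ is in the globalization. The paper does not set up an equivalence relation; instead it covers $[0,1]$ by neighborhoods $V_s$ on which $I_t$ tames $\sigma_s$, takes a partition of unity $\{\rho_s\}$ with $\rho_0(0)=\rho_1(1)=1$, and defines the single smooth family $\sigma'_t := \sum_s \rho_s(t)\,\sigma_s$; each $\sigma'_t$ is a convex combination of forms tamed by $I_t$, hence symplectic and cohomologous to $\sigma_0$, so one application of Moser finishes the proof. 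Your open--closed argument on $[0,1]$ replaces that construction with local two-point linear interpolations (essentially the paper's Lemma~\ref{biholomorphic} applied at each parameter) chained by connectedness. The trade-off: the partition-of-unity proof is shorter and yields the slightly stronger byproduct that $\sigma_0$ and $\sigma_1$ are joined by a single smooth path of cohomologous symplectic forms, whereas your version avoids constructing any global family and makes the role of the equivalence relation and of the openness of taming completely explicit. Both are valid; yours is a legitimate alternative organization of the same ideas.
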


\begin{proof}
Fix $s \in [0,1]$.
Since $I_s$ tames $\sigma_s$,  $I_t$ depends smoothly on $t$, 
and $X$ is closed, there exists an open  neighborhood
$V_s$ of $s \in [0,1]$ so that
$I_t$ tames $\sigma_s$ for all $t \in V_s$.
Choose
a partition of unity $\{\rho_{s}\}$ subordinate to $\{V_{s}\}$ so that
$\rho_0(0) = 1$  and $\rho_1(1) = 1$.
Then $\sigma'_t := \sum_s \rho_{s}(t) \sigma_{s}$ is tamed by
$I_t$ (and hence is symplectic) for all $t \in [0,1]$.
Since $\sigma'_i = \sigma_i$ for $i = 0,1$,
the  claim now follow by Moser's method.
\end{proof}

We are now ready to classify  marked tame K3 surfaces 
that are taken by $\Pi \circ \tau_2$ to a given component of $G_3^+(L_\R)$,
up to marked symplectomorphism.

\begin{Lemma}\labell{classifymk3} \
\begin{enumerate}
\item Given $\kappa \in L_\R$ and a component of $G_3^+(L_\R)$, there exists
a marked tame K3 surface $(X,I,\sigma,\phi)$ with $\phi\big([\sigma]\big) = \kappa$
and $\Pi(\tau_2(X,I,\sigma,\phi))$ in the given component if and only if
$(\kappa,\kappa) > 0$.
\item Given marked tame K3 surfaces $(X_0,I_0,\sigma_0,\phi_0)$ and
$(X_1,I_1,\sigma_1,\phi_1)$ so that 
$\Pi(\tau_2(X_0,I_0,\sigma_0,\phi_0))$
and $\Pi(\tau_2(X_1,I_1,\sigma_1,\phi_1)$ lie in the same
component of $G_2^+(L_\R)$, 
there is a marked symplectomorphism from
$(X_0,\sigma_0,\phi_0)$ to $(X_1,\sigma_1,\phi_1)$ if and only
if 
$\phi_0\big([\sigma_0] \big) = \phi_1 \big([\sigma_1] \big)$.
\end{enumerate}
\end{Lemma}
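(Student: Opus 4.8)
The plan is to deduce both statements from the classification of marked tame K3 surfaces up to marked biholomorphism (Corollary~\ref{tamedtorelli}), the path--lifting property of the refined period map (Corollary~\ref{existstamek3}), and the description of the fibers of $\Pi$ in Lemma~\ref{TildeKOmega}. Throughout write $\tau_2(X_i,I_i,\sigma_i,\phi_i) = (\phi_i([\sigma_i]),[\alpha_i])$ with $[\alpha_i] = \tau_1(X_i,I_i,\phi_i)$. Both \emph{only if} directions are immediate: since $\phi$ is an isometry and $[\sigma]$ is represented by a symplectic form, $(\phi([\sigma]),\phi([\sigma])) = ([\sigma],[\sigma]) = \int_X \sigma \wedge \sigma > 0$, which gives the forward implication in (1); and a marked symplectomorphism $f \colon X_0 \to X_1$ satisfies $f^* = \phi_0\inv\circ\phi_1$ and $f^*([\sigma_1]) = [\sigma_0]$, whence $\phi_1([\sigma_1]) = \phi_0([\sigma_0])$.

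For the reverse implication in (1), suppose $(\kappa,\kappa) > 0$ and fix a component of $G_3^+(L_\R)$. By Lemma~\ref{TildeKOmega}(2) I would choose $[\alpha]\in\Omega$ with $(\kappa,[\alpha])$ in the component of the slice $(\{\kappa\}\times\Omega)\cap(\Tilde{K\Omega})^\circ$ that $\Pi$ carries into the prescribed component. Applying Corollary~\ref{existstamek3} to the constant path at $(\kappa,[\alpha])$ then yields a marked tame K3 surface $(X,I,\sigma,\phi)$ with $\tau_2(X,I,\sigma,\phi) = (\kappa,[\alpha])$, so $\phi([\sigma]) = \kappa$ and $\Pi(\tau_2(X,I,\sigma,\phi))$ lies in the chosen component.

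The substance is the reverse implication in (2). Set $\kappa := \phi_0([\sigma_0]) = \phi_1([\sigma_1])$. By Lemma~\ref{image}, both $(\kappa,[\alpha_0])$ and $(\kappa,[\alpha_1])$ lie in $(\{\kappa\}\times\Omega)\cap(\Tilde{K\Omega})^\circ$, and by hypothesis $\Pi$ sends both into the same component of $G_3^+(L_\R)$; hence Lemma~\ref{TildeKOmega}(2) places them in the same connected component of this slice. I would join them by a smooth path $\gamma_t = (\kappa,[\alpha_t])$ \emph{with constant first coordinate}, and lift it via Corollary~\ref{existstamek3} to marked tame K3 surfaces $(X,J_t,\sigma'_t,\phi)$ on a single manifold $X$, with $J_t$ smooth in $t$ and $[\sigma'_t] = \phi\inv(\kappa)$ independent of $t$. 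Lemma~\ref{pathforms} then gives a symplectomorphism $h \colon (X,\sigma'_0)\to(X,\sigma'_1)$ with $h^* = \id$. Since $\tau_2(X,J_0,\sigma'_0,\phi) = \tau_2(X_0,I_0,\sigma_0,\phi_0)$, Corollary~\ref{tamedtorelli} supplies a marked biholomorphism, which Lemma~\ref{biholomorphic} upgrades to a marked symplectomorphism $f_0 \colon (X_0,\sigma_0)\to(X,\sigma'_0)$ with $f_0^* = \phi_0\inv\circ\phi$; similarly one obtains $f_1 \colon (X_1,\sigma_1)\to(X,\sigma'_1)$ with $f_1^* = \phi_1\inv\circ\phi$. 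Then $f_1\inv\circ h\circ f_0$ would be a symplectomorphism $(X_0,\sigma_0)\to(X_1,\sigma_1)$ inducing $(\phi_0\inv\circ\phi)\circ\id\circ(\phi\inv\circ\phi_1) = \phi_0\inv\circ\phi_1$ on cohomology, i.e.\ a marked symplectomorphism.

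The only delicate point --- and the reason (2) must assume agreement of components --- is that the connecting path be forced to keep $\kappa$ fixed, so that $[\sigma'_t]$ is constant and Lemma~\ref{pathforms} applies. It is precisely Lemma~\ref{TildeKOmega}(2), describing the topology of the fixed-$\kappa$ slice rather than of all of $(\Tilde{K\Omega})^\circ$, that guarantees such a path exists once the two period points lie over the same component of $G_3^+(L_\R)$.
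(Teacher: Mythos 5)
Your proposal is correct and follows essentially the same route as the paper's own proof: part (1) via Lemma~\ref{TildeKOmega}.(2) together with Corollary~\ref{existstamek3}, and part (2) by connecting the two period points with a constant-$\kappa$ path in the slice $\left(\{\kappa\}\times\Omega\right)\cap(\Tilde{K\Omega})^\circ$, lifting it by Corollary~\ref{existstamek3}, applying Lemma~\ref{pathforms}, and then transporting back through Corollary~\ref{tamedtorelli} and Lemma~\ref{biholomorphic} before composing. The only difference is that you spell out the ``only if'' directions, which the paper leaves implicit.
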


\begin{proof}
Fix $\kappa \in L_\R$. Since the cup product pairing is positive on tamed
classes,  we may assume that $(\kappa,\kappa) > 0$.
Hence, claim (1) follows immediately from
Lemma~\ref{TildeKOmega}.(2)
and  Corollary~\ref{existstamek3}.

Let $(X_0,I_0,\sigma_0,\phi_0)$ and
$(X_1,I_1,\sigma_1,\phi_1)$ be marked tame K3 surfaces so that
$\Pi(\tau_2(X_0,I_0,\sigma_0,\phi_0))$
and $\Pi(\tau_2(X_1,I_1,\sigma_1,\phi_1)$ lie in the same
component of $G_3^+(L_\R)$; assume that
$\phi_0\big([\sigma_0] \big) = \phi_1 \big([\sigma_1] \big)$.

By Lemmas~\ref{image} and  \ref{TildeKOmega}.(2), there is a smooth path 
$\gamma  \colon [0,1] \to
\left( \big\{ \phi_0\big([\sigma_0]\big) \big\} \times \Omega \right) \cap (\Tilde{K\Omega})^\circ$
with
\begin{equation}\labell{mp1}
\tau_2(X_i,I_i,\sigma_i,\phi_i) = \gamma_i  
\mbox{  for }i = 0,1.
\end{equation}
By Corollary~\ref{existstamek3}, there exists  marked tame K3 surfaces 
$({X}',{I}'_t,\sigma'_t,{\phi}')$ with 
\begin{equation}\labell{mp2}
\tau_2({X}',{I}'_t,{\sigma}'_t,{\phi}') = \gamma_t \mbox{  for all } t \in [0,1];
\end{equation}
moreover, $I'_t$ depends smoothly on $t$.
Hence, by Lemma~\ref{pathforms} implies that there is a marked symplectomorphism
$f'$ from $(X',\sigma'_0,\phi')$ to $(X',\sigma'_1,\phi')$.
By Corollary~\ref{tamedtorelli},
\eqref{mp1} and \eqref{mp2}
imply that there exist marked biholomorphism $g_i$ from $(X_i,I_i, \sigma_i,\phi_i)$
to $(X',I'_i, \sigma'_i, \phi')$ for $i = 0,1$.
Thus,  by Lemma~\ref{biholomorphic}  
there is marked symplectomorphisms $f_i$ from $(X_i,\sigma_i, \phi_i)$
to $(X',\sigma'_i, \phi')$ for $i = 0,1$.
Then  $ f_1\inv \circ f' \circ f_0$ is a marked 
symplectomorphism from $(X,\sigma_0,\phi_0)$
to $(X,\sigma_1,\phi_1)$.
This proves claim (2).
\end{proof}

Technically, we could end Section~\ref{ss:K3} here and still prove Theorem~\ref{main}.
More precisely, we could prove 
Proposition~\ref{freeunique2} 
and 
Proposition~\ref{existsfree2} 
by using parts (2) and (1) of 
Lemma~\ref{classifymk3}, respectively, 
instead of the corresponding parts of Proposition~\ref{classifyk3}.
On the other hand, the theorem below -- which is a slight reformulation
of a theorem of Donaldson --  immediately leads to a proof
of Proposition~\ref{classifyk3}.
Additionally, it 
completes the classification
of marked tame K3 surfaces started in Lemma~\ref{classifymk3}
by showing  that there are no marked symplectomorphisms between
two marked tame K3 surfaces if $\Phi \circ \tau_2$ takes them
to different components of $G_3^+(Li_\R)$.

\begin{refTheorem}\labell{lift}
Let $(X_0,I_0,\sigma_0,\phi_0)$ and $(X_1,I_1,\sigma_1, \phi_1)$
be marked tame K3 surfaces.
Then there is a marked diffeomorphism from $(X_0,\phi_0)$ to
$(X_1,\phi_1)$ if and only if
$\Pi(\tau_2(X_0,I_0,\sigma_0,\phi_0))$ and $\Pi(\tau_2(X_1,I_1,\sigma_1,\phi_1))$
lie in the same component of $G_3^+(L_\R)$.
\end{refTheorem}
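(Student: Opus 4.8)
The plan is to obtain both directions from the period machinery developed above, together with a theorem of Donaldson which I will use in the following form: the component of $G_3^+(H^2(X;\R))$ containing the \emph{period plane} of a K3 surface $X$ is invariant under orientation-preserving diffeomorphisms; equivalently, an isometry of $H^2(X;\Z)$ is induced by an orientation-preserving diffeomorphism only if it preserves the two components of $G_3^+(H^2(X;\R))$. Write $\Theta_i := \Pi\big(\tau_2(X_i,I_i,\sigma_i,\phi_i)\big)$. The ``only if'' direction is where Donaldson's theorem enters; the ``if'' direction is a formal consequence of Corollaries~\ref{existstamek3} and~\ref{tamedtorelli}.

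For the ``if'' direction, suppose $\Theta_0$ and $\Theta_1$ lie in the same component of $G_3^+(L_\R)$. By Lemma~\ref{image} each $\tau_2(X_i,I_i,\sigma_i,\phi_i)$ lies in $(\Tilde{K\Omega})^\circ$, and by Lemma~\ref{TildeKOmega}.(1) the two components of $(\Tilde{K\Omega})^\circ$ are precisely the $\Pi$-preimages of the two components of $G_3^+(L_\R)$; hence $\tau_2(X_0,\dots)$ and $\tau_2(X_1,\dots)$ lie in one component of $(\Tilde{K\Omega})^\circ$ and can be joined by a smooth path $\gamma$ there. Applying Corollary~\ref{existstamek3} to $\gamma$ produces marked tame K3 surfaces $(Z,J_t,\rho_t,\psi)$ on a \emph{fixed} manifold $Z$ with a \emph{fixed} marking $\psi$ and $\tau_2(Z,J_t,\rho_t,\psi)=\gamma_t$. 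Since $\tau_2(Z,J_i,\rho_i,\psi)=\gamma_i=\tau_2(X_i,I_i,\sigma_i,\phi_i)$, Corollary~\ref{tamedtorelli} yields marked biholomorphisms $g_i\colon Z\to X_i$ for $i=0,1$. Then $g_1\circ g_0\inv\colon X_0\to X_1$ is a marked diffeomorphism, as one checks directly that $(g_1\circ g_0\inv)^*=(g_0\inv)^*\circ g_1^*=(\phi_0\inv\circ\psi)\circ(\psi\inv\circ\phi_1)=\phi_0\inv\circ\phi_1$.

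For the ``only if'' direction, let $g\colon X_0\to X_1$ be a marked diffeomorphism, so $g^*=\phi_0\inv\circ\phi_1$. Because the $\phi_i$ are isometries for the cup product pairings (defined using the complex orientations of the $X_i$), $g^*$ is an isometry and not an anti-isometry; an orientation-reversing $g$ would instead make $g^*$ an isometry between forms of signature $(19,3)$ and $(3,19)$, which is impossible, so $g$ is orientation-preserving. The oriented positive three-plane $P_i:=\langle [\sigma_i],\Re\alpha_{X_i},\Im\alpha_{X_i}\rangle\subset H^2(X_i;\R)$ satisfies $\phi_i(P_i)=\Theta_i$, and its component of $G_3^+(H^2(X_i;\R))$ is exactly the period-plane component to which Donaldson's theorem applies. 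Thus $g^*$ carries the component of $P_1$ onto that of $P_0$; pushing forward by $\phi_0$ and using $g^*=\phi_0\inv\circ\phi_1$, this states precisely that $\Theta_1=\phi_1(P_1)$ and $\Theta_0=\phi_0(P_0)$ lie in the same component of $G_3^+(L_\R)$.

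The main obstacle is this last direction, and specifically the correct invocation and translation of Donaldson's theorem. I must confirm that the component of $G_3^+(H^2(X;\R))$ picked out by $\langle[\sigma],\Re\alpha_X,\Im\alpha_X\rangle$ is a genuine invariant of the \emph{oriented} K3 surface, independent of the chosen tame structure, so that the gauge-theoretic constraint applies to it: conjugating $I$ to $-I$ reverses the orientation of $X$ and, via $\Im\alpha_X\mapsto-\Im\alpha_X$, reverses the component, while Lemma~\ref{TildeKOmega}.(1) and Corollary~\ref{existstamek3} show the component is unchanged as $(I,\sigma)$ varies within a fixed orientation. Once this well-definedness is in place, the orientation bookkeeping through the two markings is routine, and combining the two directions also completes the classification up to marked symplectomorphism begun in Lemma~\ref{classifymk3} and yields Proposition~\ref{classifyk3}.
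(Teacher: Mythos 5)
Your ``if'' direction is essentially identical to the paper's: Lemmas~\ref{image} and \ref{TildeKOmega}.(1) give a path in $(\Tilde{K\Omega})^\circ$ joining the two refined period points, Corollary~\ref{existstamek3} realizes it by a family of marked tame K3 surfaces, and Corollary~\ref{tamedtorelli} supplies the two marked biholomorphisms; composing them gives the marked diffeomorphism. The genuine gap is in the ``only if'' direction, precisely at the step you flag as the main obstacle. You assert that Lemma~\ref{TildeKOmega}.(1) and Corollary~\ref{existstamek3} show the component of $G_3^+(H^2(X;\R))$ containing the period three-plane is unchanged as the tame structure $(I,\sigma)$ varies within a fixed orientation. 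They show no such thing: they allow you to realize any given smooth path in $(\Tilde{K\Omega})^\circ$ by marked tame structures, but they do not show that two \emph{arbitrary} tame structures on the same marked manifold have periods connected by such a path, i.e.\ lying in the same component. In fact, granting the ``if'' direction, this invariance is logically \emph{equivalent} to Donaldson's theorem: if $(I_0,\sigma_0)$ and $(I_1,\sigma_1)$ on $(X,\phi)$ had periods in opposite components, then $(X,I_1,\sigma_1,-\phi)$ would have its period in the same component as $(X,I_0,\sigma_0,\phi)$, and the ``if'' direction would manufacture a self-diffeomorphism with $f^* = -{\bf 1}$. So the invariance cannot be extracted from the period-domain topology alone; it is exactly the gauge-theoretic input, and your justification is circular. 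Your orientation bookkeeping is also wrong: on a complex surface ($n = 2$) the conjugate structure $-I$ induces the \emph{same} orientation, since the orientation changes by $(-1)^n$; correspondingly the tame structure $(-I,-\sigma)$ has period basis $\{-\kappa, \Re\alpha, -\Im\alpha\}$, which spans the same \emph{oriented} three-plane (two sign changes, determinant $+1$), so your claimed dictionary ``orientation reversal $\leftrightarrow$ component reversal'' is miscalibrated. (Your signature argument that a marked diffeomorphism is orientation-preserving is fine.)

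The paper sidesteps the well-definedness issue entirely, invoking Donaldson only in the weak form that no diffeomorphism of a K3 manifold induces $-{\bf 1}$ on $H^2$. Given a marked diffeomorphism $g$ from $(X_0,\phi_0)$ to $(X_1,\phi_1)$, any marked diffeomorphism $h$ from $(X_0,\phi_0)$ to $(X_1,-\phi_1)$ would yield $g\inv \circ h$ with $(g\inv \circ h)^* = -{\bf 1}$; so no such $h$ exists, whence by the contrapositive of the already-proved ``if'' direction, $\Pi(\tau_2(X_0,I_0,\sigma_0,\phi_0))$ and $\Pi(\tau_2(X_1,I_1,\sigma_1,-\phi_1))$ lie in \emph{different} components. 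Since negating the marking negates the period basis (determinant $(-1)^3 = -1$, so it flips the component) and there are exactly two components, $\Pi(\tau_2(X_0,I_0,\sigma_0,\phi_0))$ and $\Pi(\tau_2(X_1,I_1,\sigma_1,\phi_1))$ lie in the same one. If you wish to keep your stronger formulation of Donaldson's theorem, this same $-\phi_1$ trick is also the correct way to repair your well-definedness claim; it cannot be obtained from Corollary~\ref{existstamek3}.
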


\begin{proof}
Assume first that
$\Pi(\tau_2(X_0,I_0,\sigma_0,\phi_0))$ and $\Pi(\tau_2(X_1,I_1,\sigma_1,\phi_1))$
lie in the same component of $G_3^+(L_\R)$.
By Lemmas~\ref{image} and  \ref{TildeKOmega}.(1), there is a smooth path 
$\gamma \colon [0,1] \to (\Tilde{K\Omega})^\circ$  with
\begin{equation*}
\tau_2(X_i,I_i,\sigma_i,\phi_i) = \gamma_i  
\mbox{  for }i = 0,1.
\end{equation*}
Hence,  by Corollaries~\ref{existstamek3} and  
\ref{tamedtorelli},
there exists  marked tame K3 surfaces 
$({X}',{I}'_t,\sigma'_t,{\phi}')$  for all $t \in [0,1]$, and
marked biholomorphisms $g_i$ from $(X_i,I_i, \sigma_i,\phi_i)$
to $(X',I'_i, \sigma'_i, \phi')$ for $i = 0,1$.
Then $g_1\inv \circ g_0$ is a  marked diffeomorphism from $(X,\phi_0)$
to $(X,\phi_1)$.  

Now assume that there is a marked diffeomorphism from $(X_0,\phi_0)$ to $(X_1,\phi_1)$. 
Donaldson proves that there is no diffeomorphism
$f \colon X_0 \to X_0$ with 
$f^* \colon H^2(X_0;\Z) \to H^2(X_0;\Z) = - {\bf 1}$ \cite{Do}.
Hence, there is no  marked diffeomorphism from $(X_0,\phi_0)$
to $(X_1,-\phi_1)$. By the previous paragraph, this implies that
$\Pi(\tau_2(X_0,I_0,\sigma_0,\phi_0))$ and $\Pi(\tau_2(X_1,I_1,\sigma_1,-\phi_1))$
lie in different components of $G_3^+(L_\R)$, i.e.,
$\Pi(\tau_2(X_0,I_0,\sigma_0,\phi_0))$ and $\Pi(\tau_2(X_1,I_1,\sigma_1,\phi_1))$
lie in the same component.
\end{proof}

\begin{proof}[Proof of Proposition~\ref{classifyk3}]
Fix $\kappa \in H^2(X;\R)$.
As in Lemma~\ref{classifymk3}, we may assume that $(\kappa,\kappa) > 0$.
By Siu's theorem, there exists a marked tame K3 structure $(I,\sigma,\phi)$ on $X$.
By  Lemma~\ref{classifymk3}.(1), 
there exists
a marked tame K3   surface
$(X',{I}',\sigma',{\phi}')$  with 
$\phi'\big([\sigma']\big) = \phi\big([\kappa]\big)$ so that
$\Pi(\tau_2(X,I,\sigma,\phi))$ and $\Pi(\tau_2(X,I',\sigma',\phi'))$ lie
in the same component of $G_3^+(L_\R)$.
Thus,  Theorem~\ref{lift} implies that there exists a marked diffeomorphism
$f$ from  $(X,\phi)$ to  $({X}, {\phi}')$. 
The pullback $(X,f^*({I}'),f^*({\sigma}'))$
is a tame K3 surface with
$\phi \big( [f^*({\sigma}')] \big) = \phi' \big( [\sigma'] \big) 
= \phi(\kappa)$. This proves claim  (1).

Let $(I_0,\sigma_0)$ and $(I_1,\sigma_1)$ be tame K3 structures on $X$,
and consider an isometry $\phi \colon H^2(X;\Z) \to H^2(X;\Z)$.
Given a marking $\phi_0 \colon H^2(X;\Z) \to L$, 
let $\phi_1 = \phi_0 \circ \phi$.
Then
$\Pi(\tau_2(X,I_0,\sigma_0,\phi_0))$
and $\Pi(\tau_2(X,I_0,\sigma_0,\phi_1))$ lie in the same component of
$G_3^+(L_\R)$ exactly if
$\phi$ preserves the components of $G_3^+(L_\R)$, 
Hence, Theorem~\ref{lift} implies that 
that there is no diffeomorphism $f$ 
with $f^* = \phi$
if $\phi$ reverses the components.\footnote{
Alternatively, this is the original statement of Donaldson's theorem \cite{Do}.} 
On the other hand, 
Theorem~\ref{lift} implies that
$\Pi(\tau_2(X,I_0,\sigma_0,\phi_1))$
and $\Pi(\tau_2(X,I_1,\sigma_1,\phi_1))$ lie in the same component of $G_3^+(L_\R)$.
Hence, Claim (2)
follows immediately from Lemma~\ref{classifymk3}.(2).
\end{proof}

\section{Free Hamiltonian actions}
\labell{ss:free}

In this section, we analyze free Hamiltonian circle actions
on symplectic manifolds with  reduced spaces symplectomorphic 
to tame K3 surfaces (that also satisfy a technical  condition).
In this case,
the  Duistermaat-Heckman function  
is a positive polynomial of degree at most two with even coefficients.
Our main result is that
these polynomials classify such 
symplectic manifolds.

\begin{Proposition}\labell{freeunique2}
Let the circle act freely on symplectic manifolds
$(M,\omega)$ and $(M',\omega')$ with proper moment maps
%$\Psi \colon M \to V$ and $\Psi' \colon M' \to V,$
%where $V \subset \R$ is open.
$\Psi \colon M \to (a,b)$ and $\Psi' \colon M' \to (a,b).$
Assume that, for all $t \in (a,b),$  the reduced spaces 
$M \mod_{\! t} \, S^1 $ and 
$M' \mod_{\! t} \, S^1$ are  
symplectomorphic to tame K3 surfaces $(X,I_t,\sigma_t)$\footnote{
By Ehresmann's lemma, we can (and do) choose the symplectomorphisms
$M \mod_{\! t}\, S^1 \stackrel{\simeq}{\to} X$ to induce a diffeomorphism 
$M/S^1 \stackrel{\simeq}{\to} X \times (a,b)$.  
Hence, the forms $\sigma_t$ depend smoothly on $t$.
However, the complex structures $I_t$ may not.}
and $(X',I_t',\sigma_t')$; moreover,
\begin{itemize}
\item $(\sigma_t,\sigma_t) = (\sigma'_t,\sigma'_t)$; and
\item $[\sigma_t] = \kappa - t \eta$ and $[\sigma'_t] = \kappa' - t \eta'$, where 
$\kappa, \eta $ and $\kappa',\eta'$ 
induce primitive embeddings $\Z^2 \hookrightarrow H^2(X;\Z)$ and $\Z^2 \hookrightarrow H^2(X';\Z)$.
%\item  $\frac{d}{dt}[\sigma_t]$ and $[\sigma_t] - t \frac{d}{dt}[\sigma_t]$
%in $H^2(X;\Z)$, and 
%$\frac{d}{dt}[\sigma'_t]$ and $[\sigma'_t] - t \frac{d}{dt}[\sigma'_t]$
%in $H^2(X';\Z)$,  each generate a primitive lattice; and
\end{itemize}
Then every $t \in (a,b)$ has a neighborhood $U$ so that
$\Psi\inv(U)$ and $(\Psi')\inv(U)$ are equivariantly symplectomorphic.

\end{Proposition}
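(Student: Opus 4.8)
The plan is to reduce the statement to the classification of tame K3 surfaces (Proposition~\ref{classifyk3}) followed by a Moser-type uniqueness argument for free Hamiltonian circle actions near a regular level. Fix $t_0 \in (a,b)$, and use the footnote identifications $M/S^1 \simeq X \times (a,b)$ and $M'/S^1 \simeq X' \times (a,b)$, so that $\sigma_t$ and $\sigma'_t$ are smooth families of tamed forms on the fixed surfaces $X$ and $X'$. My first step is to interpret $\eta$ and $\eta'$ geometrically: the Duistermaat--Heckman theorem says that $[\omega_t]$ varies linearly with derivative equal to minus the Euler class of the principal bundle $\Psi\inv(t_0) \to X$, and since $[\omega_t] = [\sigma_t] = \kappa - t\eta$ has derivative $-\eta$, the Euler class is exactly $e = \eta$; similarly $e' = \eta'$.

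The technical heart of the proof is the second step: constructing a \emph{single} isometry that matches all of the data simultaneously. Comparing coefficients in
\[
(\sigma_t,\sigma_t) = (\kappa,\kappa) - 2t(\kappa,\eta) + t^2(\eta,\eta),
\]
the hypothesis $(\sigma_t,\sigma_t) = (\sigma'_t,\sigma'_t)$ forces the rank-two sublattices $S := \langle \kappa,\eta\rangle$ and $S' := \langle \kappa',\eta'\rangle$ to have identical Gram matrices, so $\kappa' \mapsto \kappa$, $\eta' \mapsto \eta$ defines an isometry $S' \to S$. Since $S$ and $S'$ are \emph{primitive} sublattices (this is exactly where the primitivity hypothesis enters) of rank $2$ inside the even unimodular K3 lattice of signature $(3,19)$, Nikulin's theorem on uniqueness of primitive embeddings extends this to an isometry $\phi \colon H^2(X';\Z) \to H^2(X;\Z)$ with $\phi(\kappa') = \kappa$ and $\phi(\eta') = \eta$. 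If $\phi$ happens to reverse the two components of $G_3^+$, I compose it with the reflection in a vector of square $+2$ in $S^\perp$ (such a vector exists because $S^\perp$ is even and indefinite, and the associated reflection is integral): this reflection fixes $S$ pointwise and switches the components, so I may assume that $\phi$ preserves them. I expect this lattice step to be the main obstacle; the remainder is standard symplectic bookkeeping.

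With $\phi$ in hand, the third step is immediate. After fixing a diffeomorphism identifying $X'$ with $X$ to view both reduced structures on one surface, $\phi$ satisfies $\phi([\sigma'_{t_0}]) = \phi(\kappa' - t_0\eta') = \kappa - t_0\eta = [\sigma_{t_0}]$ and preserves the components of $G_3^+$, so Proposition~\ref{classifyk3}.(2) produces a symplectomorphism $f \colon (X,\sigma_{t_0}) \to (X',\sigma'_{t_0})$ with $f^* = \phi$. In particular $f^* e' = \phi(\eta') = \eta = e$.

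The final step upgrades $f$ to an equivariant symplectomorphism of neighborhoods. Since $f^*e' = e$ and principal $S^1$-bundles are classified by their Euler class, $f$ lifts to an $S^1$-equivariant bundle isomorphism $\hat f \colon \Psi\inv(t_0) \to (\Psi')\inv(t_0)$ covering $f$. Identifying invariant tubular neighborhoods of the two level sets equivariantly with $\Psi\inv(t_0) \times U$ and $(\Psi')\inv(t_0) \times U$, with moment map the projection to $U$, the map $F := \hat f \times \id$ is an equivariant diffeomorphism satisfying $\Psi' \circ F = \Psi$. The invariant symplectic forms $\omega$ and $F^*\omega'$ then share the same moment map, restrict to the same basic form $\pi^*\sigma_{t_0}$ over the central level set (because $f$ is a symplectomorphism), and are cohomologous on $\Psi\inv(U)$ (by the Gysin sequence $H^2(\Psi\inv(U)) \cong \pi^* H^2(X)/\langle e\rangle$, using $H^1(X) = 0$, on which the two classes agree). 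An equivariant Moser argument relative to the compact central level set then yields, on a smaller neighborhood $U' \ni t_0$, an equivariant symplectomorphism between $\Psi\inv(U')$ and $(\Psi')\inv(U')$, as required.
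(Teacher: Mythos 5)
Your proposal is correct and takes essentially the same route as the paper: your ``match the Gram matrices, extend to a lattice isometry preserving the components of $G_3^+$'' step is exactly the paper's Lemma~\ref{iso} (proved there by putting $\langle\kappa,\eta\rangle$ in a normal form inside two hyperbolic summands and negating a third hyperbolic summand, rather than reflecting in a positive vector), the appeal to Proposition~\ref{classifyk3}.(2) and the identification of $\eta,\eta'$ with the Euler classes are identical, and your concluding Moser argument is precisely the paper's Lemma~\ref{Hamunique}, which it cites as standard instead of reproving. One small repair: ``even and indefinite'' alone does not force $S^\perp$ to represent $2$ (the rescaled hyperbolic plane $H(2)$ is even, indefinite, and represents only multiples of $4$), but your claim is still true because the uniqueness of the primitive embedding lets you assume $S$ sits in the normal form of Lemma~\ref{iso}, so that $S^\perp$ contains an unscaled hyperbolic summand $H$, whose element $e+f$ has square $2$.
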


In the situation described in Proposition~\ref{freeunique2},
 the Duistermaat-Heckman function of $M$
(and $M'$) at $t \in (a,b)$  is $ (\kappa,\kappa) - 2 t (\kappa, \eta) + t^2 (\eta,\eta)$.
Since the K3 lattice is even and $\kappa$ and $\eta$ are integral, this is 
a polynomial of degree at most two with even coefficients; it is positive on $(a,b)$.

\begin{Proposition}\labell{existsfree2}
Fix  a polynomial $P$ of degree at most two with even coefficients  that
is positive on $[a,b] \subset \R$.
Then there exists a free circle action on a symplectic
manifold $(M,\omega)$ with proper moment map $\Psi \colon M \to (a , b)$
so that, for all $t \in (a, b)$, 
the reduced space
$M \mod_{\! t} \, S^1$ is symplectomorphic to a tame
K3 surface $(X,I_t,\sigma_t)$; moreover,
\begin{itemize}
\item $(\sigma_t, \sigma_t) = P(t);$ and
\item $[\sigma_t] = \kappa - t \eta$, where $\kappa, \eta$  induce a primitive embedding $\Z^2 \hookrightarrow H^2(X;\Z)$.
%\item $\frac{d}{dt}[\sigma_t]$ and $[\sigma_t] - \frac{d}{dt} [\sigma_t]$ in
%$H^2(X;\Z)$  generate a primitive lattice.
\end{itemize}

\end{Proposition}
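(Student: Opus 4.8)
The plan is to reconstruct $(M,\omega)$ from its Duistermaat--Heckman data in three stages: first encode $P$ as a rank-two sublattice of the K3 lattice $L$; then realise that datum by a smooth family of tame K3 structures via the period map of Section~\ref{ss:K3}; and finally assemble the total space by a coupling construction whose integrality input is the class $\eta$. To begin, write $P(t)=c_0+c_1t+c_2t^2$ with $c_0,c_1,c_2\in 2\Z$. I would produce $\kappa,\eta\in L$ spanning a primitive sublattice with Gram matrix $\left(\begin{smallmatrix}c_0&-c_1/2\\-c_1/2&c_2\end{smallmatrix}\right)$, so that $(\kappa-t\eta,\kappa-t\eta)=P(t)$ for all $t$. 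The hypothesis that the $c_i$ are even is exactly what makes this an even integral Gram matrix, so that it can be realised inside the even unimodular lattice $L$; since $L$ has signature $(3,19)$ and the sublattice has rank at most two, Nikulin's criteria for primitive embeddings of even lattices yield such $\kappa,\eta$ (when $P$ is constant one takes $\eta$ a primitive isotropic vector orthogonal to $\kappa$). Fixing a marking $\phi\colon H^2(X;\Z)\to L$, I regard $\kappa,\eta$ as classes in $H^2(X;\Z)$ inducing a primitive embedding $\Z^2\hookrightarrow H^2(X;\Z)$.

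Next I would build a smooth path $\gamma\colon[a,b]\to(\Tilde{K\Omega})^\circ$ of the form $\gamma_t=(\kappa-t\eta,[\alpha_t])$. Since $P>0$ on $[a,b]$, the vector $\kappa-t\eta$ spans a positive line depending smoothly on $t$, so it extends to a smooth family $t\mapsto V_t$ of positive-definite oriented $3$-planes with $\kappa-t\eta\in V_t$. By the proof of Lemma~\ref{TildeKOmega}, the walls $G_3^+(d^\perp)$ over the classes $d$ with $(d,d)=-2$ form a locally finite family of codimension-three submanifolds of $G_3^+(L_\R)$; hence a generic such family $V_t$ (still containing $\kappa-t\eta$) avoids all of them, i.e.\ $V_t\in G_3^+(L_\R)^\circ$ for every $t$. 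Lifting $V_t$ through the disk fibration $\Pi$ of Lemma~\ref{TildeKOmega} gives a smooth $[\alpha_t]$ with $\Pi(\gamma_t)=V_t$, so $\gamma_t\in(\Tilde{K\Omega})^\circ$. (When the sublattice has a single positive direction, $\kappa$ and $\eta$ admit a positive $2$-plane in their orthogonal complement, and one may instead take $[\alpha_t]$ constant, landing in $(K\Omega)^\circ$.) I expect this to be the main obstacle: one must simultaneously realise the prescribed class $\kappa-t\eta$ and steer the planes $\Pi(\gamma_t)$ clear of every $(-2)$-wall for all $t$ at once, and it is the even-coefficient hypothesis together with the codimension-three, locally finite nature of these walls that makes a single such path exist.

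Applying Corollary~\ref{existstamek3} to $\gamma$ (after reparametrising to $[0,1]$) then yields marked tame K3 surfaces $(X,I_t,\sigma_t,\phi)$ with $\tau_2=\gamma_t$ and $I_t$ smooth in $t$; in particular $\phi([\sigma_t])=\kappa-t\eta$ and $(\sigma_t,\sigma_t)=P(t)$. (The bare per-level existence of such a structure is Proposition~\ref{classifyk3}.(1), but the reconstruction below needs the family.) Since Corollary~\ref{existstamek3} does not guarantee that $\sigma_t$ is smooth in $t$, I would smooth it exactly as in Lemma~\ref{pathforms}: fixing a smooth family $\beta_t$ of closed representatives of $\kappa-t\eta$ and writing $\sigma_s=\beta_s+d\nu_s$, the form $\beta_t+d\nu_s$ is tamed by $I_t$ for $t$ near $s$ and represents $\kappa-t\eta$, so for a suitable partition of unity $\{\rho_s\}$ the family $\sigma_t:=\sum_s\rho_s(t)(\beta_t+d\nu_s)$ is a smooth family of $I_t$-tamed symplectic forms with $[\sigma_t]=\kappa-t\eta$.

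Finally I assemble the total space. Let $\pi\colon P_X\to X$ be the principal circle bundle with $c_1(P_X)=-\eta$ (which exists because $\eta$ is integral), carrying a connection $\theta$ of curvature $\rho$, so $d\theta=\pi^*\rho$ and $[\rho]=-\eta$. Since $\tfrac{d}{dt}[\sigma_t]=-\eta=[\rho]$, the closed forms $\dot\sigma_t-\rho$ are exact, and I choose a smooth family $\lambda_t\in\Omega^1(X)$ with $d\lambda_t=\dot\sigma_t-\rho$. On $M:=(a,b)\times P_X$ I set
$$\omega:=\pi^*\sigma_t+dt\wedge(\theta+\pi^*\lambda_t),\qquad \Psi:=\mathrm{pr}_{(a,b)}.$$
Then $d\omega=0$ by the choice of $\lambda_t$; the top power $\omega^3$ is a nonzero multiple of $\pi^*(\sigma_t^2)\wedge dt\wedge\theta$, hence a volume form, so $\omega$ is symplectic; and $\iota_{\xi_M}\omega=-d\Psi$ for the vector field $\xi_M$ generating the $S^1$-action, so $\Psi$ is a moment map. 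The action is free because it is free on $P_X$, and $\Psi$ is proper because $X$ is compact. On $\Psi\inv(t)=\{t\}\times P_X$ the form $\omega$ restricts to $\pi^*\sigma_t$, so the reduced space $M\mod_{\!t}\,S^1$ is $(X,\sigma_t)$, which underlies the tame K3 surface $(X,I_t,\sigma_t)$ with $(\sigma_t,\sigma_t)=P(t)$ and $[\sigma_t]=\kappa-t\eta$. This delivers all the required conclusions.
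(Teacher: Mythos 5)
Your construction is correct in substance, but it takes a genuinely different route from the paper. The paper argues local-to-global: it realizes the lattice data explicitly inside hyperbolic summands of $L$ (writing $P(t) = 2\ell_2 t^2 + 2\ell_1 t + 2\ell_0$ and taking $\kappa = e_1 + \ell_0 f_1$, $\eta = -\ell_1 f_1 + e_2 + \ell_2 f_2$); for each fixed $t$ it gets a tame K3 structure with $[\sigma] = \kappa - t\eta$ from Proposition~\ref{classifyk3}.(1); it produces a local model over $(t-\epsilon, t+\epsilon)$ from Lemma~\ref{Hamexists} together with openness of the taming condition; and it then glues finitely many such models using the uniqueness result, Proposition~\ref{freeunique2}. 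You instead build a single global object: a path $\gamma_t = (\kappa - t\eta, [\alpha_t])$ in $(\Tilde{K\Omega})^\circ$, the family-version Corollary~\ref{existstamek3}, a smoothing step modeled on Lemma~\ref{pathforms}, and the coupling form on $(a,b) \times P_X$. What your route buys is that Proposition~\ref{freeunique2} is never invoked, and the family $(I_t,\sigma_t)$ is produced globally rather than patch by patch; the cost is the genericity argument needed to steer $\Pi(\gamma_t)$ clear of the $(-2)$-walls for all $t$ simultaneously. That argument does work: within $\{V \in G_3^+(L_\R) \mid \kappa - t\eta \in V\}$ each wall has codimension two (as in the proof of Lemma~\ref{TildeKOmega}) and the walls are locally finite, so a one-parameter family generically misses them, and the contractible disk fibers of $\Pi$ lift the family; your verifications of $d\omega = 0$, nondegeneracy, $\iota_{\xi_M}\omega = -d\Psi$, properness, and the identification of the reduced spaces are all correct, as is the convexity argument in the smoothing step.

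Two caveats. First, your lattice step has a small gap: Nikulin's embedding criteria apply to \emph{nondegenerate} even lattices, and you patch only the constant case by hand, but the Gram matrix $\left(\begin{smallmatrix} c_0 & -c_1/2 \\ -c_1/2 & c_2 \end{smallmatrix}\right)$ is also degenerate whenever $P$ has a double root (e.g.\ $P(t) = 2(t-5)^2$, which is positive on suitable $[a,b]$), so your cited criterion does not literally apply there. The fix is exactly the paper's explicit construction: the vectors $\kappa = e_1 + \ell_0 f_1$ and $\eta = -\ell_1 f_1 + e_2 + \ell_2 f_2$ realize \emph{any} even Gram data, degenerate or not, and are visibly primitive because the $e_1,e_2$ coefficients of an integral combination must be integers. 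Second, your sign bookkeeping $c_1(P_X) = -\eta$ is at odds with the paper's convention, in which $\eta$ itself is the Euler class of $\Psi\inv(t) \to M \mod_{\! t}\, S^1$ (this is how Proposition~\ref{freeunique2} is proved, via Duistermaat--Heckman with $\tfrac{d}{dt}[\sigma_t] = -\eta$). What your construction actually needs is a principal bundle whose curvature $\rho$ satisfies $[\rho] = [\dot\sigma_t] = -\eta$; whether its Euler class is then called $\eta$ or $-\eta$ is a Chern--Weil sign convention, so this is harmless to validity, but it should be stated consistently with Section~\ref{ss:free}.
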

To prove these propositions,
we need  two standard lemmas on the uniqueness and existence
of free Hamiltonian circle actions; see, for example, \cite{MS}.

\comment{Please check the citation above.}

\begin{Lemma}\labell{Hamunique}
Let the circle act freely on symplectic manifolds
$(M,\omega)$ and $(M',\omega')$ with proper moment maps
$\Psi \colon M \to (a,b)$ and $\Psi' \colon M' \to (a,b),$
and fix $t \in (a,b)$.
Let  $f \colon 
M \mod_{\! t} \, S^1  \to M' \mod_{\! t} \, S^1$  
be a symplectomorphism
so that  $f^*$
takes the Euler class of the circle bundle
$(\Psi')\inv(t) \to M' \mod_{\! t}\, S^1$ to the Euler class of
$\Psi\inv(t) \to M \mod_{\! t}\, S^1$.
Then
$\Psi\inv(U)$ and $(\Psi')\inv(U)$ are equivariantly symplectomorphic
for some neighborhood $U$ of $t$. 
\end{Lemma}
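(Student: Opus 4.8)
The plan is to recognize a neighborhood of the level set $\Psi\inv(t)$ as being determined, up to equivariant moment-map–preserving symplectomorphism, by the reduced space together with the principal circle bundle $\Psi\inv(t) \to M \mod_{\! t}\, S^1$, and then to use $f$ and the hypothesis on Euler classes to identify this data for $M$ with that for $M'$. Write $P := \Psi\inv(t)$ and $P' := (\Psi')\inv(t)$, let $\pi \colon P \to M \mod_{\! t}\, S^1$ and $\pi' \colon P' \to M' \mod_{\! t}\, S^1$ be the quotient maps, and let $\omega_t$, $\omega'_t$ denote the reduced forms. Since $\Psi$ is proper, $P$ is compact. The hypersurface $P$ is coisotropic in $M$, its characteristic foliation is by $S^1$-orbits, and the presymplectic form it inherits is exactly $\pi^*\omega_t$; likewise on $P'$.

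First I would lift $f$ to the total spaces. Because $f^*$ carries the Euler class of $P' \to M' \mod_{\! t}\, S^1$ to that of $P \to M \mod_{\! t}\, S^1$, and principal circle bundles are classified by their Euler class, the pullback bundle $f^* P'$ is isomorphic to $P$; composing with the canonical map $f^*P' \to P'$ yields an $S^1$-equivariant bundle isomorphism $\Tilde f \colon P \to P'$ covering $f$. Since $f^*\omega'_t = \omega_t$ and $\Tilde f$ covers $f$, we get $\Tilde f^*\big((\pi')^*\omega'_t\big) = \pi^* f^* \omega'_t = \pi^*\omega_t$; that is, $\Tilde f$ matches the induced presymplectic forms on the level sets.

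Next I would invoke the $S^1$-equivariant coisotropic neighborhood theorem: two symplectic manifolds containing invariant coisotropic submanifolds that are equivariantly diffeomorphic by a map preserving the induced presymplectic forms are equivariantly symplectomorphic on neighborhoods of those submanifolds, by a map extending the given diffeomorphism. Applied to $\Tilde f$, this produces an $S^1$-equivariant symplectomorphism $\Phi$ from a neighborhood of $P$ in $M$ to a neighborhood of $P'$ in $M'$ with $\Phi|_P = \Tilde f$. Concretely, this neighborhood theorem is realized by the standard local model: choosing a connection $1$-form $\theta$ on $P$ with curvature $\d\theta = \pi^*\Omega$, the form $\pi^*\omega_t + \d(s\,\theta) = \pi^*(\omega_t + s\,\Omega) + \d s \wedge \theta$ on $P \times (-\eps,\eps)$ is an invariant symplectic form with moment map $t + s$, and every neighborhood of $P$ is equivariantly symplectomorphic to such a model by an equivariant Moser argument. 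Here $[\Omega]$ is the Euler class and the reduced form at level $t+s$ is $\omega_t + s\,\Omega$, so the model reproduces the Duistermaat–Heckman variation $[\omega_s] = [\omega_t] + (s-t)[\Omega]$ of $M$. Finally, since $\Phi$ is equivariant and symplectic, $\Phi^*(\iota_{\xi_{M'}}\omega') = \iota_{\xi_M}(\Phi^*\omega') = \iota_{\xi_M}\omega$, whence $\d(\Psi' \circ \Phi) = \d\Psi$ and $\Psi' \circ \Phi - \Psi$ is constant; as $\Psi = t = \Psi' \circ \Tilde f$ on $P$, this constant is $0$, so $\Phi$ preserves the moment map. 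Restricting to a small interval $U \ni t$ inside the image gives the desired equivariant symplectomorphism $\Psi\inv(U) \to (\Psi')\inv(U)$.

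The hard part will be the equivariant extension step, namely upgrading the presymplectic-form–preserving lift $\Tilde f$ to an honest equivariant symplectomorphism of neighborhoods. This is exactly the equivariant coisotropic neighborhood theorem, whose proof is an equivariant relative Moser argument: the Duistermaat–Heckman identity $[\omega_s] = [\omega_t] + (s-t)[\Omega]$ guarantees that the interpolating family of forms is cohomologous relative to $P$, while averaging over $S^1$ keeps the correcting isotopy invariant and tangent to the level sets of $\Psi$, so that the moment map is preserved throughout.
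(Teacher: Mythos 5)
Your proof is correct: the paper itself gives no argument for this lemma, labeling it a standard fact and citing McDuff--Salamon, and what you wrote is precisely the standard proof behind that citation --- lift $f$ to an equivariant bundle isomorphism of the level sets via the Euler class hypothesis, then apply the equivariant local normal form $\bigl(P \times (-\epsilon,\epsilon),\ \pi^*\omega_t + d(s\,\theta)\bigr)$ for a free level set together with an equivariant relative Moser argument, checking at the end that moment maps match because their differentials agree and they coincide on the level set. The only point stated slightly loosely is the last step of the Moser argument: to keep the isotopy tangent to the levels of $\Psi$ one should choose the primitive $\beta$ of $\omega - \omega_{\mathrm{model}}$ to be basic (so $\iota_\xi \beta = 0$, which is possible since the difference form is closed, invariant, and annihilated by $\iota_\xi$), which is a refinement of, not a correction to, your ``averaging'' remark.
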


\begin{Lemma}\labell{Hamexists}
Let $(X,\sigma)$ be a closed symplectic manifold; fix $\mu \in \Omega^2(X)$ with $[\mu] \in H^2(X;\Z)$.
There exists $\epsilon > 0$, and 
a free circle action
on a symplectic manifold $(M,\omega)$  with
proper moment map $\Psi \colon M \to (-\epsilon, \epsilon)$
so that
the reduced space $M \mod_{\! t} \, S^1$ is 
symplectomorphic  to $(X,\sigma - t \mu)$ for all $t \in (-\epsilon, \epsilon)$.
\end{Lemma}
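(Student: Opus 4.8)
The plan is to realize $M$ by the classical \emph{minimal coupling} construction, building it as a principal circle bundle over $X$ times a short interval and equipping it with a closed two-form whose restriction to each level set recovers the family $\sigma - t\mu$.

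First I would fix the bundle and a connection with prescribed curvature. Since $[\mu] \in H^2(X;\Z)$, the class $-[\mu]$ is the de Rham image of an integral class, so there is a principal circle bundle $\pi \colon P \to X$ with Euler class $-[\mu]$; here $S^1 = \R/\Z$ acts on $P$ freely, with generating vector field $\xi_P$. Pick any invariant connection $1$-form $\theta_0$ (so $\iota_{\xi_P}\theta_0 = 1$); its curvature satisfies $d\theta_0 = \pi^* F_0$ with $[F_0] = -[\mu]$. Writing $-\mu - F_0 = d\beta$ for some $\beta \in \Omega^1(X)$ and setting $\theta := \theta_0 + \pi^*\beta$, I obtain a connection with $d\theta = -\pi^*\mu$ on the nose.

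Next I would assemble $(M,\omega,\Psi)$. Let $M := P \times (-\epsilon,\epsilon)$ with $S^1$ acting on the first factor (hence freely), write $t$ for the coordinate on the interval, and set
$$\omega := \pi^*\sigma + d(t\,\theta) = \pi^*(\sigma - t\mu) + dt \wedge \theta, \qquad \Psi := t.$$
The form $\omega$ is closed because $\pi^*\sigma$ is closed and $d(t\theta)$ is exact. Choosing $\epsilon$ small enough that $\sigma - t\mu$ stays nondegenerate on the compact manifold $X$ for all $|t| < \epsilon$, I would split $TM$ into the horizontal lift of $TX$, the vertical line $\langle \xi_P \rangle$, and $\langle \partial_t \rangle$; with respect to this splitting $\omega$ is block diagonal, equal to $\pi^*(\sigma - t\mu)$ on the horizontal summand and to the nondegenerate pairing $\omega(\xi_P,\partial_t) = -1$ on the remaining plane, so $\omega$ is symplectic.

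It then remains to check the action data. A direct contraction gives $\iota_{\xi_M}\omega = -dt$, so $d\Psi = -\iota_{\xi_M}\omega$ and $\Psi$ is a moment map; it is proper since $P$ is compact. Restricting $\omega$ to a level set $\{t\}\times P$ annihilates $dt$ and leaves $\pi^*(\sigma - t\mu)$, so the reduced space $M \mod_{\! t} S^1 = P/S^1 = X$ carries exactly the form $\sigma - t\mu$, as required. There is no substantial obstacle here: the construction is standard, and the only real inputs are the existence of a connection with the prescribed curvature (which is exactly what integrality of $[\mu]$ buys) together with the choice of a small enough $\epsilon$. The one point that demands care is the sign convention — taking the Euler class to be $-[\mu]$ rather than $[\mu]$ is what makes the reduced form at level $t$ equal $\sigma - t\mu$ under the convention $d\Psi = -\iota_{\xi_M}\omega$.
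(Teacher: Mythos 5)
Your proof is correct and is exactly the standard minimal-coupling construction: the paper does not prove this lemma itself but refers to McDuff--Salamon for it, and every step you give --- the connection with $d\theta = -\pi^*\mu$, the form $\omega = \pi^*(\sigma - t\mu) + dt \wedge \theta$, nondegeneracy for small $\epsilon$ via compactness of $X$, $\iota_{\xi_M}\omega = -dt$, properness, and the identification of the reduced forms --- checks out. One cosmetic caveat: with the conventions used elsewhere in the paper (where, as in the proof of Proposition~\ref{freeunique2}, the class $\eta$ satisfying $[\sigma_t] = \kappa - t\eta$ is the Euler class of $\Psi\inv(t) \to M \mod_{\! t}\, S^1$), a connection form normalized by $\iota_{\xi_P}\theta = 1$ with $d\theta = \pi^*F$ has $[F]$ equal to \emph{minus} the Euler class, so the bundle you construct has Euler class $+[\mu]$ rather than $-[\mu]$; this does not affect the lemma, whose statement places no constraint on the Euler class, but the sign matters if you track the identification $\eta = [\mu]$ into the later gluing arguments.
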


We also need the following fact.

\begin{Lemma}\labell{iso}
Fix $\kappa, \eta, \kappa',$ and $\eta'$ in the K3 lattice  $L$ satisfying:
\begin{itemize}
\item $(\kappa - t \eta,  \kappa - t \eta ) = (\kappa' - t \eta' ,\kappa' - t \eta')$ for all $t \in \R$; and
\item  each pair $\kappa, \eta$ and $\kappa', \eta'$ induces a primitive embedding $\Z^2 \hookrightarrow L$.
\end{itemize}
Then there exists an isometry of $L$ that takes $\kappa'$ to $\kappa$,
takes $\eta'$ to $\eta$, and preserves 
(alternatively, reverses) the components of $G_3^+(L_\R)$.
\end{Lemma}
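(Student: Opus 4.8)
Lemma 4.6 (iso) — proof plan.

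The plan is to reduce the statement to the uniqueness of primitive embeddings of even lattices into the even unimodular lattice $L$, and then to separately manufacture an isometry that fixes $\kappa$ and $\eta$ while reversing the components of $G_3^+(L_\R)$.

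First I would unpack the hypothesis. Expanding $(\kappa - t\eta, \kappa - t\eta)$ and $(\kappa' - t\eta', \kappa' - t\eta')$ as quadratics in $t$ and comparing the coefficients of $1$, $t$, and $t^2$ shows that $(\kappa,\kappa) = (\kappa',\kappa')$, $(\kappa,\eta) = (\kappa',\eta')$, and $(\eta,\eta) = (\eta',\eta')$. Hence the primitive rank-two sublattices $S := \Z\kappa + \Z\eta$ and $S' := \Z\kappa' + \Z\eta'$ have identical Gram matrices, so $\kappa' \mapsto \kappa$, $\eta' \mapsto \eta$ defines an isometry $S' \to S$. (In the applications the quadratic form is a genuine positive degree-two polynomial, so $S$ is nondegenerate; I would record this reduction and treat $S$ as an honest rank-two even lattice.) I would then invoke Nikulin's theorem on the uniqueness of primitive embeddings of an even lattice into an even unimodular lattice. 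The numerical hypotheses are immediate: $L$ has signature $(3,19)$ while $S$ has signature $(s_+,s_-)$ with $s_+ \le 2 < 3$ and $s_- \le 2 < 19$, and since $A_S = S^*/S$ is generated by at most two elements we have $\operatorname{rank} L = 22 \ge \operatorname{rank} S + 2 + \ell(A_S)$. This produces $g \in O(L)$ with $g(\kappa') = \kappa$ and $g(\eta') = \eta$.

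It remains to control the parity of $g$ with respect to the two components of $G_3^+(L_\R)$, and for this I would build an isometry $h$ of $L$ that fixes $\kappa$ and $\eta$ yet exchanges the components. Because $S$ is primitive in the unimodular lattice $L$, its orthogonal complement $S^\perp$ is an even lattice of rank $20$ and signature $(3-s_+, 19-s_-)$ — in particular indefinite, with at least one positive and at least one negative direction — and $A_{S^\perp} \cong A_S$, so $\ell(A_{S^\perp}) \le 2$. Since $\operatorname{rank}(S^\perp) = 20 \ge \ell(A_{S^\perp}) + 3$, the structure theory of indefinite even lattices gives a splitting $S^\perp \cong U \oplus R$ with $U = \langle e,f \rangle$ a hyperbolic plane. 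On $U$ the map $e \mapsto -f$, $f \mapsto -e$ is an isometry that negates the positive vector $e+f$ and fixes the negative vector $e-f$; extending it by the identity on $R$ and on $S$ yields $h \in O(L)$ (as $U$ is unimodular, the extension automatically preserves $L$). Since $h$ fixes $S$ pointwise while reversing orientation on a maximal positive-definite subspace, it fixes $\kappa$ and $\eta$ and exchanges the components of $G_3^+(L_\R)$.

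Finally, $g$ and $h \circ g$ both send $\kappa' \mapsto \kappa$ and $\eta' \mapsto \eta$, and exactly one of them preserves while the other reverses the components of $G_3^+(L_\R)$; this realizes both alternatives. I expect the third step to be the main obstacle: extracting a component-reversing isometry that is \emph{forced} to fix the prescribed sublattice is where the indefiniteness of $S^\perp$ and the hyperbolic-plane splitting carry the real content, whereas the coefficient comparison and the verification of Nikulin's hypotheses are routine.
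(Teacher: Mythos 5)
Your skeleton matches the paper's proof at the top level: compare coefficients to equate Gram matrices, invoke uniqueness of primitive embeddings of the rank-two sublattice $S = \Z\kappa + \Z\eta$ into $L$, and compose with an isometry that fixes a suitable pair while exchanging the components of $G_3^+(L_\R)$. But there is a genuine gap: your opening reduction to nondegenerate $S$ is not justified and in fact fails for the paper's own use of the lemma. The statement of Lemma~\ref{iso} imposes no nondegeneracy, and degeneracy genuinely occurs: $S$ is degenerate exactly when the polynomial $(\kappa - t\eta, \kappa - t\eta)$ is constant or has a double root, and the constant case arises in the paper's proof of Kotschick's theorem, where Proposition~\ref{freeunique2} (hence Lemma~\ref{iso}) is applied with $(\sigma_t,\sigma_t) \equiv 2$, i.e.\ $(\eta,\eta) = (\kappa,\eta) = 0$ and Gram matrix $\left(\begin{smallmatrix} 2 & 0 \\ 0 & 0 \end{smallmatrix}\right)$. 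Both of your main tools presuppose nondegeneracy: Nikulin's uniqueness theorem for primitive embeddings is formulated for nondegenerate even lattices, and your construction of the component-swapping isometry $h$ breaks down outright in the degenerate case, since then the radical of $S$ lies in $S^\perp$ (indeed in the radical of $S^\perp$), so $S^\perp$ is a degenerate rank-$20$ lattice, the identification $A_{S^\perp} \cong A_S$ makes no sense, and the genus/splitting theory producing $S^\perp \cong U \oplus R$ does not apply.

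The paper sidesteps both difficulties with a model-pair trick: writing $L \simeq H \oplus H \oplus H \oplus -E_8 \oplus -E_8$, it sets $\Tilde{\kappa} = e_1 + \half(\kappa,\kappa) f_1$ and $\Tilde{\eta} = (\kappa,\eta) f_1 + e_2 + \half(\eta,\eta) f_2$, a primitive pair in the first two hyperbolic summands realizing any prescribed (possibly degenerate) even Gram matrix; the sign change on the third hyperbolic summand then fixes $\Tilde{\kappa}, \Tilde{\eta}$ and swaps the components of $G_3^+(L_\R)$, so one never needs a swap fixing the given $S$ itself. The embedding-uniqueness result it cites, \cite[Theorem I.2.9]{BPV}, is applied twice (from $\kappa', \eta'$ to the model and from the model to $\kappa, \eta$) and is precisely what covers the degenerate sublattices that Nikulin's framework excludes. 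In the nondegenerate case your argument is correct, and your direct construction of $h$ on $S^\perp$ via the hyperbolic-plane splitting is a legitimate alternative to the model-pair device; but as written the proposal proves a strictly weaker statement than Lemma~\ref{iso}, and repairing it means either replacing Nikulin's theorem by the James--Wall type result of \cite[Theorem I.2.9]{BPV} together with a swap anchored to a model pair, or needlessly restricting the lemma and losing the Kotschick application.
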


\begin{proof}
The K3 lattice  can be written as the orthogonal direct sum
$$L \simeq H \oplus H \oplus H \oplus -E_8 \oplus -E_8,$$
where $H$ is the indefinite, even, unimodular lattice of rank $2$;
and $E_8$ is the positive definite, even, unimodular lattice of rank $8$.
Hence, for $i \in \{1,2,3\}$,
there exist $e_i$ and $f_i$ in the $i$'th summand above
satisfying $(e_i, e_j) = (f_i , f_j) = 0$
and $(e_i,f_j) = \delta_{ij}$ for all $i,j \in \{1,2,3\}$. 
Consider
$$\Tilde{\kappa} := e_1 + \half (\kappa,\kappa) f_1  
\ \mbox{and} \ \Tilde{\eta} := (\kappa,\eta) f_1 + e_2 + \half (\eta,\eta) f_2 \ \in L.$$
The isometry $\phi \colon H^2(X;\Z) \to H^2(X;\Z)$ 
that takes $e_3$ to $-e_3$ and $f_3$ to $-f_3$ but is  the identity map on the other summands
fixes $\Tilde{\kappa}$ and $\Tilde{\eta}$ but exchanges the components of
$G^+_3(L)$.
By assumption, the pairs $\kappa, \eta$; 
$\kappa',\eta'$; and  $\Tilde{\kappa}, \Tilde{\eta}$ each  induce a primitive embedding $\Z^2 \hookrightarrow L$.
Moreover, their images are isomorphic  lattices.
Hence,  \cite[Theorem I.2.9]{BPV} implies that  there exists an isometry of $L$
that takes  $\kappa'$ to $\Tilde{\kappa}$ and $\eta'$ to $\Tilde{\eta}$, 
and another  isometry that takes  $\Tilde{\kappa}$ to $\kappa$ and $\Tilde{\eta}$ to 
$\eta$.
By either composing these two isometries or  composing these two isometries with 
$\phi$ inserted between them,
we construct the required isometry.
\end{proof}

Using the results (and notation) from the previous section, we can now specialize to that case
that the reduced spaces are symplectomorphic to tame K3 surfaces.

\begin{proof}[Proof of Proposition~\ref{freeunique2}]
Fix $t \in (a,b)$.
Since all K3 surfaces are diffeomorphic, we may assume that $X' = X$.
Since $H^2(X;\Z) \simeq L$,  Lemma~\ref{iso}
implies that there is an isometry $\phi \colon H^2(X;\Z) \to H^2(X;\Z)$
so that $\phi(\kappa') = \kappa$, $\phi(\eta') =\eta$,
and $\phi$ preserves the 
components of $G_3^+(H^2(X;\R))$.
By Proposition~\ref{classifyk3}.(2), this implies that there exists a 
symplectomorphism $f$ from $(X,\sigma_t)$ to $(X',\sigma_t')$
satisfying
 $f^*(\eta')  = \eta$. 
Since $H^2(X;\Z)$ is torsion-free,
$\eta \in H^2(X;\Z)$ is the Euler class of
the circle bundle $\Psi\inv(t) \to M \mod_{\! t} \, S^1$ and
$\eta' \in H^2(X;\Z)$ is
the Euler class
of 
$(\Psi')\inv(t) \to M' \mod_{\! t}\, S^1$.
Hence, the claim follows immediately from Lemma~\ref{Hamunique}.
\end{proof}

\begin{proof}[Proof of Proposition~\ref{existsfree2}]
Write $P(t) = 2 \ell_2 t^2 + 2 \ell_1 t + 2 \ell_0 $, and let
$X$ be a manifold that admits a K3 structure.
Since $H^2(X;\Z) \simeq L$,  there exist $e_1, e_2, f_1 $ and $f_2 \in H^2(X;\Z)$ 
satisfying $(e_i,e_j) =  (f_i,f_j) = 0$ and $(e_i,f_j) = \delta_{ij}$ 
for all $i,j \in \{1,2\}$.
Consider $\kappa := e_1 + \ell_0 f_1$ and $\eta := - \ell_1 f_1 + e_2 + \ell_2 f_2\in H^2(X;\Z)$.
Then  $(\kappa - t \eta, \kappa - t \eta) = P(t)$ for all $t$,
and $\kappa, \eta$ induce a primitive embedding $\Z^2 \hookrightarrow H^2(X;\Z)$.

Fix $t \in [a,b]$.  
Since $P(t)> 0$,
Lemma~\ref{classifyk3} implies that there exists a tame K3  structure
$(I,\sigma)$ on $X$ satisfying $[\sigma] = \kappa - t \eta$. 
Pick a closed two-form $\mu \in \Omega^2(X)$ with $[\mu] = \eta$.
By Lemma~\ref{Hamexists}, 
there exists $\epsilon > 0$, and a free circle action on a symplectic manifold $(M, \omega)$ with proper moment map  
$\Psi \colon M \to (t -\epsilon, t+ \epsilon)$ so that
the reduced space 
$M \mod_{\! s} \, S^1$  is symplectomorphic 
to  $(X,\sigma - (s-t) \mu)$ for all $s \in (t -\epsilon,t+ \epsilon)$.
By construction, 
$$[\sigma - (s - t) \mu] = \kappa - s \eta \quad \forall s \in (t - \epsilon, t + \epsilon).$$
Finally, since tameness is an open condition, after possible shrinking $\epsilon$,
$I$ tames $\sigma - (s - t)\mu$ for all $s \in (t - \epsilon, t + \epsilon)$.

Since $[a,b]$ is compact,
this implies that we can cover $[a,b]$ by open sets
$V_1,\dots,V_k$ so that each $V_i$ is the moment image of
a circle action on a symplectic manifold $(M_i,\omega_i)$ 
satisfying the prescribed conditions.
By Proposition~\ref{freeunique2}, after possibly shrinking the $M_i$ and $V_i$, we may assume that $M_i$ is equivariantly symplectomorphic to $M_j$ over $V_i\cap V_j$ for all $i$ and $j$. Therefore, we can construct $M$ by
gluing together the $M_i$.
\end{proof}

\subsection{Kotschick's theorem}
\label{Kotschick} 
As we mentioned in the introduction, our proof is adapted from \cite{Ko}.
Nevertheless, we  now give a simple proof of his main theorem.

\begin{refTheorem}[Kotschick]%\labell{Kotschick}
There exists a free symplectic circle action on a closed connected 
six-dimensional symplectic manifold so that each orbit  is 
contractible.
\end{refTheorem}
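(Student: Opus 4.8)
The plan is to run the construction of Proposition~\ref{existsfree2} over an interval whose two ends carry identical reduced-space data, and then to glue those ends together to obtain a circle-valued moment map. The organizing observation is this: if we try to glue the level over $a$ to the level over $a+c$, we need an isometry of $H^2(X;\Z)$ that fixes the Euler class $\eta$ and carries $[\sigma_{a+c}] = \kappa - (a+c)\eta$ to $[\sigma_a] = \kappa - a\eta$; since such an isometry must send $\kappa$ to $\kappa + c\eta$, comparing pairings forces $(\eta,\eta)=0$ and $(\kappa,\eta)=0$, i.e.\ the Duistermaat--Heckman function $P(t) = (\kappa - t\eta,\kappa-t\eta)$ must be constant. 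This dictates the lattice data.

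First I would fix the data. Working in $L \simeq H^2(X;\Z)$ for a manifold $X$ admitting a K3 structure, I would choose $\kappa,\eta \in L$ with $(\eta,\eta)=0$, $(\kappa,\eta)=0$, and $(\kappa,\kappa) = 2m > 0$, spanning a primitive rank-two sublattice; e.g.\ $\eta = e_1$ and $\kappa = e_2 + m f_2$ in two hyperbolic summands, in the notation of Lemma~\ref{iso}. Fix a positive integer $c$ (say $c=1$). With this choice $P \equiv 2m$ is a positive constant with even value, so Proposition~\ref{existsfree2} produces a free Hamiltonian circle action on $(M',\omega')$ with proper moment map $\Psi' \colon M' \to (a, a+c+\delta)$ whose reduced space at $t$ is a tame K3 surface with $[\sigma_t] = \kappa - t\eta$ and circle-bundle Euler class $\eta$.

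Next I would glue the two ends of this interval. Restricting $\Psi'$ to a collar near $a$ gives a free Hamiltonian action with data $(\kappa,\eta)$, while restricting near $a+c$ and shifting the moment map by $-c$ gives, over the same small interval, a free Hamiltonian action with $[\sigma_t] = \kappa - (t+c)\eta = (\kappa - c\eta) - t\eta$, i.e.\ data $(\kappa - c\eta,\eta)$. Both are primitive and both yield $(\sigma_t,\sigma_t) = 2m$, so Proposition~\ref{freeunique2} furnishes an equivariant symplectomorphism between suitable collar neighborhoods, covering the shift $t \mapsto t+c$ (this is where Lemma~\ref{iso}, applied to the two pairs, supplies the component-preserving monodromy isometry internally). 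Gluing the top collar to the bottom collar along this symplectomorphism yields a closed, connected, six-dimensional symplectic manifold $M$ carrying a free symplectic circle action with generalized moment map $\Psi \colon M \to \R/(c\Z) \simeq S^1$. The action is free by construction, hence automatically non-Hamiltonian.

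Finally, the contractibility of orbits: each level set $\Psi\inv(t)$ is a principal $S^1$-bundle over the simply connected K3 surface $X$ with Euler class $\eta$. Since $\eta$ is primitive and $H^2(X;\Z)$ is torsion-free, the connecting homomorphism $\pi_2(X) \cong H_2(X;\Z) \to \pi_1(S^1) = \Z$, $\beta \mapsto \langle \eta,\beta\rangle$, is surjective, and $\pi_1(X)=0$, so the homotopy exact sequence gives $\pi_1(\Psi\inv(t)) = 0$. Thus each level set is simply connected, and every orbit, being contained in such a level set, is null-homotopic in $M$. The main obstacle is the first, conceptual step: recognizing that gluing the interval into a circle forces the Duistermaat--Heckman function to be constant and hence $\eta$ to be isotropic and orthogonal to $\kappa$; once the lattice data is chosen this way, the gluing reduces to a direct application of Propositions~\ref{existsfree2} and~\ref{freeunique2}, and primitivity of $\eta$ delivers contractibility for free.
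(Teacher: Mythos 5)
Your proposal is correct and takes essentially the same route as the paper: the paper likewise invokes Proposition~\ref{existsfree2} with a constant Duistermaat--Heckman polynomial (there $P \equiv 2$, i.e.\ your $m=1$, $c=1$, over $(-\epsilon, 1+\epsilon)$), identifies the two end collars equivariantly via Proposition~\ref{freeunique2}, and deduces contractibility of orbits from the primitivity of the Euler class $\eta$ over the simply connected K3 reduced spaces. Your opening observation that a circle-valued gluing forces $P$ to be constant (hence $(\eta,\eta) = (\kappa,\eta) = 0$) is sound motivation that the paper leaves implicit in its choice of lattice data.
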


\begin{proof}
Fix $\epsilon > 0$.
By Proposition~\ref{existsfree2}, there exists 
a free circle action on a symplectic
manifold $(M,\omega)$ with proper moment map $\Psi \colon M \to (-\epsilon , 1 + \epsilon)$
so that, for all $t \in (- \epsilon, 1 + \epsilon)$, 
the reduced space
$M \mod_{\! t} \, S^1$ is symplectomorphic to a tame
K3 surface $(X,I_t,\sigma_t)$; moreover,
\begin{itemize}
\item $(\sigma_t, \sigma_t) = 2;$ and
\item 
$[\sigma_t] = \kappa - t \eta$, 
where  $\kappa, \eta$ induce a primitive embedding
$\Z^2 \hookrightarrow  H^2(X; \Z)$.  
\end{itemize}
Since $H^2(X;\Z)$ is torsion-free,
$\eta$ is the Euler class of
the circle bundle $\Psi\inv(t) \to M \mod_{\! t} \, S^1$ for all $t \in (-\epsilon, 1+ \epsilon)$.
Since $\eta$ is primitive and $X$ is simply connected, 
this implies that the orbit $S^1 \cdot x$ is
contractible in $\Psi\inv(t)$ for all $x \in \Psi\inv(t)$.
Finally, Proposition~\ref{freeunique2} implies that, after possibly shrinking $\epsilon$,
$\Psi^{-1}(-\epsilon, \epsilon)$ and
$\Psi^{-1}(1-\epsilon,1+\epsilon)$ are equivariantly symplectomorphic. 
We  construct the proposed manifold by identifying these subspaces.
\end{proof}

\section{Locally free Hamiltonian actions}
\labell{ss:locally free}

In this section, we consider locally free
Hamiltonian circle actions on symplectic manifolds with reduced 
spaces diffeomorphic to the  Kummer surface $T/\Z_2$.
(Recall that  $\Z_2$ acts on  
$T = \C^2/\left(\Z^2 + \sqrt{-1}\,  \Z^2 \right)$  
by  the involution $[z] \mapsto [-z]$.)
Unlike the previous section, we don't prove general theorems but  simply construct the
examples we need.
Moreover,  we endow our examples with congenial complex structures 
that we use to add  fixed points in the next section.
More precisely, we prove the following:

\begin{Proposition}\labell{existslf}
There exist complex manifolds $(M_+,J_+)$ and $(M_-, J_-)$ with 
locally free holomorphic $\C^\times$ actions,
 $S^1 \subset \C^\times$  invariant  symplectic forms $\omega_{\pm} \in \Omega^2(M_{\pm})$, 
proper moment maps  $\Psi_{\pm} \colon M_{\pm} \to \R$, and $\C^\times$ invariant maps
$\pi_\pm \colon M_\pm \to T/\Z_2$ 
so that the following hold:
\begin{enumerate}
\item For all $t \in \R$, the map $\pi_\pm$ induces
a symplectomorphism from 
$M_\pm \mod_{\! t}\, S^1$ to $T/\Z_2$ with 
$(\sigma_\pm)_t \in \Omega^2(T/\Z_2)$, where
\begin{equation}\labell{sigmapm}
(\sigma_\pm)_t =
dz_1 \, dz_2 + d\zbar_1 \, d \zbar_2 \pm  \sqrt{-1} \, t dz_1 \,d\zbar_1 \pm   \sqrt{-1} \, t dz_2 \, d \zbar_2.\footnote{ 
By a slight abuse of notation, we describe a form on 
$T/\Z_2$
by giving its pull-back to $T$.} 
\end{equation}
\item $\pi_\pm$  induces 
a biholomorphism from $M_\pm/\C^\times$ to $T/\Z_2$. 
\item $\omega_{\pm}(\xi_\pm, J_{\pm}(\xi_\pm)) > 0$, where $\xi_\pm 
\in \chi(M_\pm)$ generates the $S^1$ action. 
\item $J_\pm$ tames $\omega_\pm$ on the  preimage
$\Psi_\pm\inv\big(\pm (0,\infty)\big)$.
%$ \{ x \in M_\pm  \mid  \pm \Psi_{\pm} (x) > 0 \}.$
\end{enumerate}
Moreover, there is an equivariant symplectomorphism from $M_+$ to $M_-$ that intertwines the moment maps.
\end{Proposition}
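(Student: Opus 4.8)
The plan is to realize each $M_\pm$ as the complement of the zero section in a suitable $\Z_2$-equivariant line bundle over $T$, equipped with a minimal-coupling form whose reduction at level $t$ is exactly $(\sigma_\pm)_t$. Throughout write $\beta := dz_1\,dz_2 + d\zbar_1\,d\zbar_2$ and $\gamma := \sqrt{-1}\,(dz_1\,d\zbar_1 + dz_2\,d\zbar_2)$, so that $(\sigma_\pm)_t = \beta \pm t\gamma$; both are invariant under $z \mapsto -z$ and hence descend to $T/\Z_2$. The key structural point is that $\gamma$ is the flat Kähler form of $(T,I)$ whereas $\beta$ has type $(2,0)+(0,2)$, so $\beta(v,Iv) = 0$ for every tangent vector $v$. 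Since $(\beta \pm t\gamma)^2 = 8(1+t^2)\,dx_1\,dy_1\,dx_2\,dy_2$, the form $\beta \pm t\gamma$ is symplectic on $T/\Z_2$ for \emph{every} $t\in\R$, with self-intersection $4+4t^2$, matching the Duistermaat--Heckman function of Remark~\ref{varphi}.

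For the complex manifolds I would take a holomorphic line bundle $L_\pm \to T$ carrying the invariant Hermitian metric whose Chern connection has curvature exactly $\pm\gamma$, together with a $\Z_2$-linearization (covering $z\mapsto -z$) acting by $-1$ on the fibers over the sixteen $2$-torsion points. Then $\Z_2$ acts freely on the complement $L_\pm^\times$ of the zero section, and I set $M_\pm := L_\pm^\times/\Z_2$ with the induced complex structure $J_\pm$. Fiberwise scaling descends to a locally free holomorphic $\C^\times$-action whose isotropy is $\Z/(2)$ exactly over the sixteen orbifold points, and the bundle projection descends to a $\C^\times$-invariant map $\pi_\pm \colon M_\pm \to T/\Z_2$ inducing a biholomorphism $M_\pm/\C^\times \cong T/\Z_2$; this is claim (2).

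Writing $\alpha_\pm$ for the Chern connection $1$-form (so $\iota_{\xi_\pm}\alpha_\pm = 1$, $\alpha_\pm$ vanishes on the horizontal distribution $H$, and $d\alpha_\pm = \pm\pi_\pm^*\gamma$), $\Psi_\pm := \log|\cdot|$ for the log-norm, and setting
\[
\omega_\pm := \pi_\pm^*\beta + d(\Psi_\pm\,\alpha_\pm) = \pi_\pm^*\beta + d\Psi_\pm \wedge \alpha_\pm \pm \Psi_\pm\,\pi_\pm^*\gamma,
\]
one checks that $\omega_\pm$ is closed, $S^1$-invariant, and $\iota_{\xi_\pm}\omega_\pm = -d\Psi_\pm$, so $\Psi_\pm$ is a proper moment map onto $\R$. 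Metric compatibility makes the norm constant along $H$, so $d\Psi_\pm|_H = 0$; together with $\alpha_\pm|_H = 0$ this makes the orbit plane $\langle\xi_\pm, J_\pm\xi_\pm\rangle$ and $H$ mutually $\omega_\pm$-orthogonal, with $\omega_\pm|_H$ restricting at the level $\Psi_\pm = t$ to $\pi_\pm^*(\beta \pm t\gamma)$ and $\omega_\pm(\xi_\pm, J_\pm\xi_\pm) = -(J_\pm\xi_\pm)\Psi_\pm = 1$ (using $J_\pm\xi_\pm = -r\partial_r$). Hence $\omega_\pm$ is symplectic and its reduction at level $t$ is exactly $\beta \pm t\gamma = (\sigma_\pm)_t$, which gives claims (1) and (3); moreover, for $v = a\xi_\pm + bJ_\pm\xi_\pm + v_h$ with $v_h\in H$ the cross terms vanish, leaving
\[
\omega_\pm(v, J_\pm v) = (a^2 + b^2)\,\omega_\pm(\xi_\pm, J_\pm\xi_\pm) \pm t\,|\pi_{\pm*}v_h|^2
\]
(using $\beta(w,Iw) = 0$), which is positive precisely for $t \in \pm(0,\infty)$, so $J_\pm$ tames $\omega_\pm$ on $\Psi_\pm^{-1}(\pm(0,\infty))$, giving claim (4).

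For the \emph{moreover} clause I would use complex conjugation: $z \mapsto \zbar$ descends to a diffeomorphism $\bar c$ of $T/\Z_2$ with $\bar c^*\beta = \beta$ and $\bar c^*\gamma = -\gamma$, whence $\bar c^*(\sigma_-)_t = (\sigma_+)_t$ for all $t$. Pulling $M_-$ back along $\bar c$ gives a circle bundle of curvature $\bar c^*(-\gamma) = +\gamma$, matching $M_+$; after choosing the flat parts of $L_\pm$ compatibly, I identify $\bar c^*M_- \cong M_+$ as principal $S^1$-bundles with connection. The covering isomorphism $\Phi\colon M_+ \to M_-$ preserves norms and pulls $\alpha_-$ back to $\alpha_+$, so $\Psi_- \circ \Phi = \Psi_+$ and $\Phi^*\omega_- = \pi_+^*\bar c^*\beta + d(\Psi_+\,\alpha_+) = \omega_+$, producing the required $S^1$-equivariant symplectomorphism intertwining the moment maps. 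I expect the main obstacle to be the honest construction of $L_\pm$: one must exhibit a linearization of a curvature-$\pm\gamma$ bundle acting by $-1$ on all sixteen $2$-torsion fibers (so that $M_\pm$ is a smooth manifold with exactly the stated orbifold quotient) and arrange the flat parts so that $\bar c^*M_- \cong M_+$ as bundles with connection; the theta-characteristic bookkeeping here is the delicate step, whereas the minimal-coupling verification of (1)--(4) is routine once the bundle is fixed.
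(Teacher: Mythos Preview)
Your approach is essentially the same as the paper's, expressed in the intrinsic language of Hermitian line bundles and Chern connections rather than in explicit coordinates. The paper realises $\Tilde{M}$ concretely as $(\C^2 \times \C^\times)/\Z^4$ with $\C^\times \simeq \C/\sqrt{-1}\,\Z$; your $L_\pm^\times$ is exactly this object. If one sets $\alpha := -\sqrt{-1}\,(\partial\Tilde\Psi - \bar\partial\Tilde\Psi)$ in the paper's notation, then $\iota_{\Tilde\xi}\alpha = 1$, $d\alpha = 2\sqrt{-1}\,\partial\bar\partial\Tilde\Psi = \gamma$, and $d\Tilde\Psi \wedge \alpha = 2\sqrt{-1}\,\partial\Tilde\Psi\wedge\bar\partial\Tilde\Psi$, so the paper's $\Tilde\eta = \sqrt{-1}\,\partial\bar\partial(\Tilde\Psi^2)$ is literally your minimal-coupling term $d(\Psi\,\alpha)$, and $\Tilde\omega = \pi^*\beta + d(\Tilde\Psi\,\alpha)$. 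Likewise the paper's explicit symplectomorphism $f([z_1,z_2;u]) = [\bar z_1,\bar z_2;\, u + k_1\Re(z_1)^2 + k_2\Re(z_2)^2]$ is precisely a lift of complex conjugation on the base, matching your $\bar c$.

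The one place where the paper adds content you have flagged but not supplied is the linearisation. You ask for a $\Z_2$-lift acting by $-1$ on \emph{all} sixteen fibres over the $2$-torsion points; for a symmetric line bundle on an abelian surface this sign pattern is not automatic and depends on the theta characteristic. The paper sidesteps the theta bookkeeping by working with the degree-two polarisation (curvature $\gamma$, i.e.\ $k_1=k_2=2$ even) and taking the involution $[z;u]\mapsto[-z;\,u+\sqrt{-1}\,\ell/2]$ with $\ell$ odd: then the fixed-point condition $k_1 m_1 n_1 + k_2 m_2 n_2 \equiv \ell \pmod 2$ is never satisfied, so the action is free. This is exactly the explicit realisation of the linearisation you said you needed, and the even-degree hypothesis is what makes the uniform $-1$ sign pattern possible. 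Your sketch of (1)--(4) and the ``moreover'' clause is otherwise correct and matches the paper's computations.
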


\begin{Remark}\labell{varphi}
The integral of the pull-back of $(\sigma_\pm)_t \wedge (\sigma_\pm)_t$ over $T$ is $8 + 8 t^2$.
Hence, the  Duistermaat-Heckman function of $M_\pm$ is $4 + 4 t^2$.
\end{Remark}

\begin{proof}
Fix integers $k_1$ and $k_2$ with $k_1 k_2 > 0$.
Identify $\C^\times$ with the quotient $\C/(\sqrt{-1} \, \Z)$, and hence
$S^1 \subset \C^\times$ with the quotient $\sqrt{-1} \, \R/( \sqrt{-1} \, \Z)$.
Under this identification, the group $\Z^4$ acts holomorphically on $\C^2 \times \C^\times$ by
\begin{multline*}
(n_1,m_1,n_2,m_2) \cdot (z_1,z_2; u) =  \\
\left(z_1 + n_1 + \sqrt{-1}\,  m_1, z_2 + n_2 + \sqrt{-1}\,  m_2; 
%u  - k_1 n_1 z_1 -    k_1  n_1^2 /2  - k_2 n_2 z_2 - k_2  n_2^2/2  \right).
u  - k_1 (n_1 z_1 +  n_1^2 /2)  - k_2 (n_2 z_2 + n_2^2/2)  \right)
\end{multline*}
for all $(n_1,m_1,n_2,m_2) \in \Z^4$ and $(z_1,z_2;u) \in \C^2 \times \C^\times$.

The 
quotient $(\Tilde{M},\Tilde{J}) = \left( \C^2 \times \C^\times \right) /\Z^4$ is a complex manifold
with a free holomorphic $\C^\times \simeq \C/( \sqrt{-1} \, \Z)$  action given by
$$[w] \cdot [z_1,z_2;u] = [z_1, z_2 ; u + w]$$
for all $w \in \C^\times$ and $[z_1,z_2;u] \in \Tilde{M}$.
Let $\Re(x+ \sqrt{-1} \, y) := x$ for all $x,y \in \R$,
and define an $S^1$ invariant proper function 
$\Tilde{\Psi} \colon \Tilde{M} \to \R$ by
%\begin{multline*}
$$
\Tilde{\Psi}\big([z_1,z_2;u]\big) =    \Re(u) + k_1 \Re(z_1)^2/2 +  k_2 \Re(z_2)^2/2 $$ 
for all $[z_1,z_2;u] \in \Tilde{M}$.
%= (u + \overline{u})/2 + k_1(z_1 + \overline{z}_1)^2/8 + k_2 (z_2 + \overline{z}_2)^2/8.  
% \end{multline*}
Next, define a  closed real $S^1$ invariant $(1,1)$-form  on $\Tilde{M}$ by
$$
 \Tilde{\eta} :=  \sqrt{-1} \, \partial 
\overline{\partial} \big( {\Tilde{\Psi}^2} \big)  = 
2 \sqrt{-1} \, \del \Tilde{\Psi} \wedge \delbar \Tilde{\Psi} 
+ \sqrt{-1} \, \Tilde{\Psi} \left(
k_1 dz_1 \, d\overline{z}_1 +  k_2  dz_2 \,  d\overline{z}_2 \right)/ 2,$$
where
$$\del \Tilde{\Psi} = 
\left(d u  + k_1 \Re(z_1) dz_1 +   
k_2 \Re(z_2) dz_2 \right)/2.$$
The form $\Tilde{\eta}$ is not symplectic.
However, if $k_1$ and $k_2$ are positive (respectively, negative),
the complex structure $\Tilde{J}$ tames the form $\Tilde{\eta}$  
on the preimage $\Tilde{\Psi}^{-1}(0,\infty)$ 
(respectively,  $\Tilde{\Psi}^{-1}(-\infty,0) $).

Given a non-zero integer $k_3$,
define  a closed real $S^1$ invariant two-form $\Tilde{\omega}$ on $\Tilde{M}$ by
$$\Tilde{\omega} = k_3 d z_1 d z_2/2 + k_3 d \zbar_1  d \zbar_2/2 + \Tilde{\eta}.$$
Since 
$$
\Tilde{\omega}^3/6    
= -  \sqrt{-1}\,  (k_3^2 + k_1 k_2 \Tilde{\Psi}^2 ) d z_1 \, d \zbar_1 \, d z_2 \, d \zbar_2 du \, d \overline{u}/8 \neq 0, 
$$
the form $\Tilde{\omega}$ is  symplectic.
The action of the circle $S^1 \subset \C^\times$ on $\Tilde{M}$ is generated by the vector field
$$\Tilde{\xi} = \sqrt{-1} \,
\left( \frac{\partial}{\partial u} - \frac{\partial}{ \partial\ubar }\right),$$ 
and so
$\iota_{\Tilde{\xi}} \Tilde{\omega} =  - d \Tilde{\Psi}.$
Therefore, $\Tilde{\Psi} \colon \Tilde{M} \to \R$ is a moment map for the circle action
on $(\Tilde{M},\Tilde{\omega})$.
For any $t \in \R$,
the map $\Tilde{\pi}$ from $\Tilde{M}$ to $T$ given by $[z_1,z_2;u] \mapsto [z_1,z_2]$  
induces a diffeomorphism from the reduced space
$\Tilde{\Psi}\inv(t)/S^1$ to  $T$.
% the inverse map is given by
%$$[z_1,z_2] \mapsto  [z_1,z_2; t - k_1\Re(z_1)^2/2 - k_2\Re(z_2)^2/2].$$
Since the form  $\del \Tilde{\Psi} \wedge \delbar \Tilde{\Psi} = \del \Tilde{\Psi} \wedge d \Tilde{\Psi}$
vanishes on the level set $\Tilde{\Psi}\inv(t)$,  
this identifies 
the reduced symplectic form with $\Tilde{\sigma}_t \in \Omega^2(T)$, where
$$\Tilde{\sigma}_t  = k_3 dz_1 dz_2/2  + k_3 d \zbar_1 d \zbar_2/ 2 + 
\sqrt{-1}\,  k_1 t dz_1 d \zbar_1/2  + \sqrt{-1} \, k_2 t d z_2 d \zbar_2/2.$$
Additionally, the map $\Tilde{\pi}$  induces a biholomorphism from 
$\Tilde{M}/\C^\times$ to $T$.
It is straightforward to check that  $\Tilde{\omega}\big({\Tilde{\xi}}, \Tilde{J}\big({\Tilde{\xi}} \,\big)\big) > 0$.
Finally, since $dz_1 \, d z_2 \in \Omega^{2,0}(\Tilde{M})$ and
$d \overline{z_1} \, d \overline{z}_2 \in \Omega^{0,2}(\Tilde{M})$,
the preceding paragraph implies that
if $k_1$ and $k_2$ are positive (respectively, negative) then the complex structure $\Tilde{J}$ tames the form $\Tilde{\omega}$  
on the preimage $\Tilde{\Psi}^{-1}(0,\infty)$ 
(respectively,  $\Tilde{\Psi}^{-1}(-\infty,0)$);
cf. Lemma~\ref{tamedKahler}.

Given an integer $\ell$, the  group  $\Z_2$ acts holomorphically
on $\Tilde{M}$  by the involution
\begin{equation}\labell{involution}
[ z;u] \mapsto [-z;u + \sqrt{-1}\,  \ell/2 ].
\end{equation}
In most cases, the action is not free and 
the quotient $\Tilde{M}/Z_2$ is a complex orbifold $(M_+,J_+)$.
However, if $k_1$ and $k_2$ 
are  even and $\ell$ is odd, this action is free,
and so $M_+$ is a complex manifold.
In any case, since the $\C^\times$ action on $\Tilde{M}$ commutes with the involution,  
it induces a locally free holomorphic  $\C^\times$ action  on $M_+$.
Since the involution  preserves 
the symplectic form $\Tilde{\omega}$ and the  moment map $\Tilde{\Psi}$, 
$M_+$ inherits  an $S^1$ invariant  symplectic form $\omega_+$ and proper moment map 
$\Psi_+$.
Finally, since  $\Tilde{\pi}$ is $\Z_2$ equivariant, it engenders a $\C^\times$ invariant map $\pi_+ \colon M_+ \to T/\Z_2$
which induces a diffeomorphism from the
reduced space $M_+ \mod_{\! t}S^1$ 
to the  orbifold
$T/Z_2$ for all $t \in \R$;
 this identifies the reduced symplectic form  on $M \mod_{\! t} S^1$ with 
$(\sigma_+)_t \in \Omega^2(T/\Z_2)$, where
\begin{equation}\labell{sigmaplus}
(\sigma_+)_t =
 k_3 d z_1 dz_2/2 + k_3 d \zbar_1 d \zbar_2/2 + 
\sqrt{-1}\, k_1 t dz_1 d \zbar_1/2 + \sqrt{-1}\, k_2 t dz_2 d \zbar_2/2.
\end{equation}
Clearly, claims (2) and (3) are satisfied;
if  $k_1$ and $k_2$ are positive, then claim (4) is satisfied as well.

Now  repeat the process with $-k_1, -k_2, k_3,$ and $\ell$ to construct
another complex orbifold $(M_-,J_-)$ with  locally free holomorphic 
$\C^\times$ action,    $S^1$ invariant  symplectic form 
$\omega_-$,  proper  moment map $\Psi_- \colon M_- \to \R$, and
$\C^\times$ invariant map $\pi_- \colon M_- \to T/\Z_2$.
Then, for all $t \in \R$, the map $\pi_-$ induces a symplectomorphism from $M_- \mod_{\! t} \, S^1$ to $T/\Z_2$ with $(\sigma_-)_t \in \Omega^2(T/\Z_2)$, where
\begin{equation}\labell{sigmaminus}
(\sigma_-)_t =
 k_3 d z_1 dz_2/2 + k_3 d \zbar_1 d \zbar_2/2  
- \sqrt{-1}\, k_1 t dz_1 d \zbar_1/2 - \sqrt{-1}\, k_2 t dz_2 d \zbar_2/2.
\end{equation}
Again, if $k_1$ and $k_2$ are positive then claims (2), (3) and  (4) are satisfied.

Define an equivariant  diffeomorphism $f \colon M_+ \to M_-$ that intertwines the moment maps by 
$$f([z_1,z_2;u]) = [\zbar_1,  \zbar_2; u + k_1 \Re(z_1)^2 + k_2 \Re(z_2)^2].$$
Working in coordinates, it is straightforward to check that 
$f$ is a  symplectomorphism.

The proposition now follows if we take $k_1 = k_2 = k_3 = 2$.
\end{proof}

\begin{Remark}
More generally, 
given integers $k_1, k_2 > 0$ and $k_3 \neq 0$,
the proof above shows that Proposition~\ref{existslf} still  holds
if we replace ``manifolds" by ``orbifolds" and replace \eqref{sigmapm}
by \eqref{sigmaplus} and \eqref{sigmaminus}.
Given an integer $\ell$, the point $[z;u] \in \Tilde{M}$ 
is fixed  by the involution \eqref{involution} exactly if
\begin{enumerate}
\item [(a)] $z_i =  \left (m_i + \sqrt{-1}\, n_i \right)\!/2$, 
where $m_i, n_i \in \Z$ for $i = 1, 2$,  and
\item [(b)] $  k_1 m_1 n_1  + k_2 m_2 n_2 = \ell \pmod  2.$
%\item $2 \Re(z_i)$ and $2 \Im(z_i)$ are integers for $i = 1,2$,  and
%\item $  k_1 2 \Re(z_1) 2 \Im(z_1)  + k_2 2 \Re(z_2) 2 \Im(z_2) = \ell \pmod  2.$
\end{enumerate}
Therefore,  
every point $[z;u] \in M_\pm$ that satisfies (a) and (b) is a singular point of the orbifold with isotropy group $\Z_2$; otherwise, $M_\pm$ is smooth.
In contrast, the stabilizer group $\{ \lambda \in \C^\times \mid \lambda \cdot [z;u] = [z;u]\}$ is $\Z_2$  if (a) holds but (b) does not; otherwise,
the action of $\C^\times$ on $M_\pm$ is free.
\end{Remark}

\section{Hamiltonian actions with fixed points}
\labell{ss:fixed}

In this final section, we consider Hamiltonian circle actions
on symplectic manifolds {\em with} fixed points.
However, each regular  reduced space 
is still  either symplectomorphic to
a tame K3 surface or diffeomorphic to the Kummer surface $T/\Z_2$.
As in the previous section, we don't prove general theorems but simply 
use ideas from \cite{TWa} to construct the
examples we need.
More precisely, we prove the following:

\begin{Proposition}\labell{existsfixed}
There exist symplectic
manifolds $(\Tilde{M}_\pm,\Tilde{\omega}_\pm)$ 
with circle actions and
proper moment maps $\Tilde\Psi_\pm \colon \Tilde{M}_\pm \to \R$,
and $\epsilon > 0$ so that:
\begin{enumerate}
\item
The preimages $\Tilde{\Psi}_\pm\inv(\pm (-\infty, 1 + \epsilon))$ each contain exactly $16$ fixed
points; each lies in $\Tilde\Psi_\pm(\pm 1)$ 
and has weights $\pm \{-2,1,1\}$.
\item 
Let $(M_\pm, \omega_\pm,\Psi_\pm)$ be the symplectic manifolds with locally free circle actions and proper moment maps described in Propositions~\ref{existslf}.
There exist 
$a_\pm < b_\pm$ in $\pm (0,1)$ so that
$\Psi_\pm\inv(a_\pm,b_\pm)$ and $\Tilde{\Psi}_\pm\inv(a_\pm,b_\pm)$ 
are equivariantly symplectomorphic.
\item 
For all $t \in \pm(1,1+\epsilon)$,
the reduced space 
$\Tilde{M}_\pm \mod_{\!  t} \, S^1$  is symplectomorphic to a tame K3 surface 
$(\Tilde{X},\Tilde{I},(\Tilde{\sigma}_\pm)_t)$; moreover,

\begin{itemize}
\item $\big( (\Tilde{\sigma}_\pm)_t, (\Tilde{\sigma}_\pm)_t \big)  = -4 \pm 16 t - 4 t^2$; and
\item $\big[(\Tilde{\sigma}_\pm)_t\big] = \Tilde{\kappa} -  t \Tilde{\eta}_\pm,$ 
where $\Tilde{\kappa},\Tilde{\eta}_\pm$ induce a primitive embedding $\Z^2 \hookrightarrow H^2(\Tilde{X};\Z)$.
\end{itemize}
\end{enumerate}
\end{Proposition}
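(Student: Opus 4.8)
The plan is to obtain $(\Tilde{M}_\pm,\Tilde{\omega}_\pm,\Tilde{\Psi}_\pm)$ from the locally free examples $(M_\pm,\omega_\pm,\Psi_\pm)$ of Proposition~\ref{existslf} by applying the fixed-point-addition construction of \cite{TWa} at each of the $16$ orbits where the $\C^\times$-action fails to be free, thereby resolving the $16$ nodes of the Kummer reduced space one fixed point at a time. I will carry out the argument for $M_+$; the manifold $\Tilde{M}_-$ is produced by the same construction after reversing signs (equivalently, by transporting along the equivariant symplectomorphism $M_+ \cong M_-$ of Proposition~\ref{existslf}), which moves the level $1$ to $-1$ and the weights $\{-2,1,1\}$ to $\{2,-1,-1\}$.

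First I would locate the exceptional orbits. Taking $k_1=k_2=k_3=2$ and $\ell$ odd as in the proof of Proposition~\ref{existslf}, condition~(b) of the Remark following that proposition never holds, so $M_+$ is a smooth manifold and the $\C^\times$-stabilizers are exactly the subgroups $\Z_2$ lying over the $16$ half-lattice points $z^{(0)}$ (those with $z_i^{(0)} \in \tfrac12(\Z+\sqrt{-1}\,\Z)$); these map under $\pi_+$ to the $16$ singular points of $T/\Z_2$. Lifting to $\Tilde{M}$ and using transverse coordinates $w = z - z^{(0)}$, the $\Z_2$-stabilizer acts by $w \mapsto -w$, so the transverse model of the $\C^\times$-quotient is the $A_1$ singularity $\C^2/\Z_2$; this is exactly the data required by the construction of \cite{TWa}. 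Since, by part~(4) of Proposition~\ref{existslf}, $J_+$ tames $\omega_+$ on $\Psi_+\inv(0,\infty)$, which contains the level $t=1$, the construction applies in the tame category: near each such orbit it glues in a neighborhood of a fixed point with isotropy weights $\{-2,1,1\}$ at level $1$, whose local moment map $-|z_0|^2 + \tfrac12|z_1|^2 + \tfrac12|z_2|^2$ reduces to $\C^2/\Z_2$ below level $1$ and to its minimal resolution above. Performing this at all $16$ orbits simultaneously produces $\Tilde{M}_+$ with exactly $16$ fixed points, each in $\Tilde{\Psi}_+\inv(1)$ and with weights $\{-2,1,1\}$; this is claim~(1). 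Because the construction is supported in an arbitrarily small neighborhood of $\Tilde{\Psi}_+\inv(1)$, it leaves $M_+$ untouched below some level $b_+ < 1$, giving claim~(2): for any $a_+ < b_+$ in $(0,1)$ on which nothing was altered, $\Psi_+\inv(a_+,b_+)$ and $\Tilde{\Psi}_+\inv(a_+,b_+)$ are equivariantly symplectomorphic.

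For claim~(3), for $t \in (1,1+\epsilon)$ the reduced space $\Tilde{M}_+ \mod_{\! t} S^1$ is the minimal resolution $\pi \colon \Tilde{X} \to T/\Z_2$ of all $16$ nodes, hence a K3 surface carrying the complex structure $\Tilde{I}$ descended from $J_+$; taming on $\Psi_+\inv(0,\infty)$ descends (cf.\ Lemma~\ref{tamedKahler}) to show $(\Tilde{\sigma}_+)_t$ is tamed, so $(\Tilde{X},\Tilde{I},(\Tilde{\sigma}_+)_t)$ is a tame K3 surface. Writing $E_1,\dots,E_{16}$ for the exceptional $(-2)$-classes, each resolving curve acquires positive reduced area proportional to $t-1$ and vanishing at $t=1$; together with $(E_i,E_j)=-2\delta_{ij}$ and the orthogonality of pullbacks to the $E_i$ this forces
$$[(\Tilde{\sigma}_+)_t] = \pi^*[(\sigma_+)_t] - c\,(t-1)\sum_{i=1}^{16} E_i = \Big(\pi^*\kappa_0 + c\sum_i E_i\Big) - t\,\Big(\pi^*\eta_0 + c\sum_i E_i\Big),$$
where $[(\sigma_+)_t] = \kappa_0 - t\eta_0$ on $T/\Z_2$ and $c>0$ is determined by the local surgery. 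Since $(\pi^*[(\sigma_+)_t])^2$ equals the Kummer self-intersection $4+4t^2$ of Remark~\ref{varphi},
$$\big((\Tilde{\sigma}_+)_t,(\Tilde{\sigma}_+)_t\big) = (4+4t^2) + c^2(t-1)^2\sum_i (E_i,E_i) = 4+4t^2 - 32\,c^2(t-1)^2,$$
which matches the asserted $-4+16t-4t^2 = (4+4t^2)-8(t-1)^2$ precisely when $c=\tfrac12$; this is positive on $(1,1+\epsilon)$, as needed for Proposition~\ref{classifyk3}.(1). Finally, because $\tfrac12\sum_i E_i$ is integral on a Kummer K3 surface, the classes $\Tilde{\kappa} := \pi^*\kappa_0 + \tfrac12\sum_i E_i$ and $\Tilde{\eta}_+ := \pi^*\eta_0 + \tfrac12\sum_i E_i$ lie in $H^2(\Tilde{X};\Z)$, and primitivity of the embedding $\Z^2 \hookrightarrow H^2(\Tilde{X};\Z)$ follows from primitivity of $\kappa_0,\eta_0$ downstairs together with the exceptional contributions.

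The main obstacle will be the second step: confirming that the explicit transverse geometry near each $\Z_2$-orbit, in the tame (and genuinely non-K\"ahler) category with the specific $J_+$ of Proposition~\ref{existslf}, exactly meets the hypotheses of the \cite{TWa} construction and yields fixed points of weights $\{-2,1,1\}$ together with the asserted resolution of the reduced space. The cohomological bookkeeping of the last paragraph — in particular the value $c=\tfrac12$, the integrality of $\tfrac12\sum_i E_i$, and primitivity — is delicate but standard once the local model is in hand.
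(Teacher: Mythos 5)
Your proposal follows essentially the same route as the paper: start from the locally free examples of Proposition~\ref{existslf}, apply the tame surgery of \cite{TWa} (the paper's Proposition~\ref{TWa}) at the level containing the $16$ exceptional $\Z_2$-orbits to create the fixed points, identify the reduced spaces above that level with the blow-up of $T/\Z_2$ at its $16$ nodes, and finish with the same cohomological bookkeeping. Three of your flagged steps deserve comment, since the paper closes them in specific ways.

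First, your determination of the coefficient $c=\tfrac12$ is circular as written: you fix $c$ by matching the self-intersection to the target polynomial $-4+16t-4t^2$, which is the very claim being proved. The paper avoids this by quoting Proposition~\ref{simple fixed point} (\cite[Proposition 7.8]{TWa}), which states outright that crossing a weight-$\{-2,1,1\}$ fixed point changes the reduced class by $-\,t/2\sum_i \cE_i$ (shifted here to $-(t-1)/2\sum_i\cE_i$); the value $\tfrac12$ is an output of the local model, not an unknown to be solved for. Second, your primitivity argument via integrality of $\tfrac12\sum_i E_i$ is too vague to give a \emph{primitive} embedding of the pair. The paper's mechanism (Lemma~\ref{primitive}) is sharper: the half-sums of exceptional classes cancel in $\Tilde{\kappa}-\Tilde{\eta}_+$, so that difference is a pure pullback $q^*(y)$ with $\pi^*(y/2)$ primitive integral on $T$, hence itself primitive; integrality of $\Tilde{\eta}_+$ comes for free because it is the Euler class of the circle bundle $\Tilde\Psi_+\inv(t)\to \Tilde{M}_+\mod_{\! t}\,S^1$; and primitivity of the rank-two embedding then follows from the pairing $(\Tilde{\eta}_+,\cE_i)=-1$, which pins down the second coefficient of any real combination landing in the lattice. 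Third, your parenthetical that $\Tilde{M}_-$ can be obtained ``by transporting along the equivariant symplectomorphism $M_+\cong M_-$'' does not work as stated: that symplectomorphism intertwines the moment maps \emph{without} a sign flip, so transporting $\Tilde{M}_+$ would put the fixed points at level $+1$ with weights $\{-2,1,1\}$, not at $-1$ with weights $\{2,-1,-1\}$; moreover the taming hypotheses live on opposite sides ($J_+$ tames on $\Psi_+\inv(0,\infty)$, $J_-$ on $\Psi_-\inv(-\infty,0)$), and the surgery at $a=-1$ needs taming near that level. Your primary route --- rerunning the construction on $M_-$ with the sign-reversed variants of Remark~\ref{reverse} --- is exactly what the paper does, and is the one that works.
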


To construct these examples, we need to introduce some notation:
Given a holomorphic $\C^\times$ action on a complex
manifold $(M,J)$, let $\xi_M$ be the vector field associated
to the induced $S^1 \subset \C^\times$ action, and let $\Omega^2(M)^{S^1}$
denote the set of $S^1$ invariant two-forms on $M$.

We also need the following three results from \cite{TWa}, which
is joint with J. Watts.  More specifically, they correspond
to \cite[Proposition 3.1]{TWa},  \cite[Proposition 7.1]{TWa}, and
\cite[Proposition 7.8]{TWa}, respectively.

\begin{Proposition}\labell{reduction}
Let $(M,J)$ be a complex manifold with a holomorphic $\C^\times$-action, 
a symplectic form $\omega \in \Omega^2(M)^{S^1}$ satisfying
$\omega(\xi_M,J(\xi_M)) > 0$ on $M \smallsetminus M^{S^1}$,
and a
moment map $\Psi \colon M \to R$.
If $a \in \R$ is a regular value of $\Psi$ and
$U_a := \C^\times \cdot \Psi\inv(a)$, then
the following hold:
\begin{itemize}
\item The quotient $U_a/C^\times$ is a naturally a complex orbifold
\item 
There is a complex structure $J_a$ on the reduced space
$M \mod_{\! a}\, S^1$   
so that that  the inclusion $\Psi\inv(a) \hookrightarrow U_a$ induces
a biholomorphism $M \mod_{\!a } \, S^1 \to U_a/\C^\times$.
\item  If  $J$ tames $\omega$  near $\Psi\inv(a)$,
then $J_a$ tames the reduced symplectic form on $M \mod_{\! a} \, S^1$. 
\end{itemize}
\end{Proposition}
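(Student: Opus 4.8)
The plan is to follow the standard identification between the symplectic (Marsden--Weinstein) quotient and the complex (GIT-style) quotient, with $\Psi$ playing the role of a Kempf--Ness function. First I would fix the polar decomposition $\C^\times \cong S^1 \cdot R$, where $R = \exp(\R) \cong \R_{>0}$ is the real one-parameter subgroup. Because the action is holomorphic, the fundamental vector field of the generator of $R$ is $\pm J(\xi_M)$; together with the moment map identity $d\Psi = -\iota_{\xi_M}\omega$ this gives, along any $R$-orbit, $\tfrac{d}{ds}\Psi = \mp\,\omega(\xi_M,J(\xi_M))$. Since $M^{S^1}$ is $\C^\times$-invariant (as $S^1$ is central in the abelian group $\C^\times$), the entire $\C^\times$-orbit of a point of $\Psi\inv(a)$ avoids $M^{S^1}$; hence $\omega(\xi_M,J(\xi_M)) > 0$ holds all along it, and $\Psi$ is \emph{strictly monotone} on each such $R$-orbit. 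This monotonicity is the heart of the argument.

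Next I would exploit it to build the diffeomorphism. Consider $\beta\colon R \times \Psi\inv(a) \to M$, $(r,x)\mapsto r\cdot x$. Its differential is an isomorphism everywhere: the $\Psi\inv(a)$-directions fill $\ker d\Psi$, while the $R$-direction contributes $J(\xi_M)$, which is transverse to $\ker d\Psi$ because $d\Psi(J(\xi_M)) = -\omega(\xi_M,J(\xi_M)) \neq 0$. Strict monotonicity shows $\beta$ is injective (an $R$-orbit meets the level $a$ at most once), and since $\Psi$ is $S^1$-invariant one checks $U_a = R\cdot\Psi\inv(a) = \image\beta$; thus $\beta$ is an $R\times S^1$-equivariant diffeomorphism onto the open set $U_a$. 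Quotienting gives
\[ U_a/\C^\times = \big(R\times\Psi\inv(a)\big)/(R\times S^1) \cong \Psi\inv(a)/S^1 = M\mod_{\!a}\,S^1. \]
Because $a$ is regular, $S^1$ acts locally freely on $\Psi\inv(a)$; as $R$ acts freely and properly, $\C^\times$ acts properly and locally freely on $U_a$, so $U_a/\C^\times$ is a complex orbifold, and transporting its complex structure $J_a$ along the displayed diffeomorphism makes the inclusion $\Psi\inv(a)\hookrightarrow U_a$ descend to a biholomorphism. This settles the first two bullets.

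For the taming claim I would argue pointwise at $x\in\Psi\inv(a)$. Set $W := \C\cdot\xi_M = \langle\xi_M,J(\xi_M)\rangle_\R$, the tangent space to the $\C^\times$-orbit; since $\omega(\xi_M,J(\xi_M)) > 0$, the plane $W$ is $\omega$-symplectic, so $T_xM = W \oplus W^\omega$. One checks $\ker d\Psi = \R\xi_M \oplus W^\omega$, so $W^\omega$ is a canonical model for the reduced tangent space and the reduced form there is simply $\omega|_{W^\omega}$. The reduced complex structure is the quotient structure on $T_xM/W$, which under the identification $W^\omega \cong T_xM/W$ reads $J_a v = J v - \operatorname{pr}_W(Jv)$, where $\operatorname{pr}_W$ is the projection onto $W$ along $W^\omega$. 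Then for $0\neq v\in W^\omega$,
\[ \omega(v, J_a v) = \omega(v, Jv) - \omega\big(v,\operatorname{pr}_W(Jv)\big) = \omega(v, Jv) > 0, \]
the middle term vanishing because $v\in W^\omega$ is $\omega$-orthogonal to $W$, and the final inequality being exactly the hypothesis that $J$ tames $\omega$ near $\Psi\inv(a)$. Hence $J_a$ tames the reduced form.

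I expect the main obstacle to be the careful verification that $U_a/\C^\times$ genuinely acquires a \emph{complex-orbifold} structure for which the reduction map is holomorphic, i.e.\ promoting the smooth identification to the holomorphic and orbifold categories, including the existence of holomorphic slices for the locally free $\C^\times$-action. The monotonicity of $\Psi$ and the freeness and properness of the $R$-factor are exactly what make a slice theorem applicable, but keeping track of the finite stabilizers (the orbifold points arising from the locally free, not free, $S^1$-action on the regular level) is where the technical bookkeeping lies.
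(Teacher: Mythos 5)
The paper does not actually prove Proposition~\ref{reduction}: it is imported verbatim from the companion paper \cite{TWa} (where it is Proposition~3.1), so there is no internal proof here to compare against. Judged on its own merits, your argument is the standard Kempf--Ness/gradient-flow proof, and it is the mechanism that \cite{TWa} is built on: strict monotonicity of $\Psi$ along the $\R_{>0}$-orbits, coming from $d\Psi(J(\xi_M)) = -\omega(\xi_M,J(\xi_M)) < 0$ off the $\C^\times$-invariant set $M^{S^1}$ (and $\Psi\inv(a)$ avoids $M^{S^1}$ because $a$ is regular); the resulting identification $U_a \cong \R_{>0}\times \Psi\inv(a)$; and, for the taming claim, the pointwise splitting $T_xM = W \oplus W^\omega$ with $W = \operatorname{span}_\R(\xi_M, J(\xi_M))$. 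Your computation $\omega(v,J_a v) = \omega(v,Jv) > 0$ for $0 \neq v \in W^\omega$ is correct, and you have correctly matched the two models of the reduced tangent space, since $\ker d\Psi \cap W = \R\,\xi_M$ makes the map $\ker d\Psi/\R\,\xi_M \to T_xM/W$ an isomorphism restricting to the identity on $W^\omega$.

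Two points deserve tightening. First, your justification that $d\beta$ is invertible at $(r,x)$ with $r \neq 1$ is stated too loosely: since $\omega$ is only $S^1$-invariant, $\Psi \circ r$ is not a moment map for $\omega$, so $dr\big(T_x\Psi\inv(a)\big)$ need not equal $\ker d\Psi$ at $r\cdot x$, and the phrase ``the $\Psi\inv(a)$-directions fill $\ker d\Psi$'' is literally true only on the level set itself. The one-line fix is equivariance: because the action is holomorphic and $\C^\times$ is abelian, $dr\inv\big(J(\xi_M)(r\cdot x)\big) = J(\xi_M)(x)$, and $d\Psi_x\big(J(\xi_M)(x)\big) \neq 0$ then gives injectivity of $d\beta$ at every $(r,x)$. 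Second, as you yourself flag, the assertion that a proper, locally free, holomorphic $\C^\times$-action has a complex-orbifold quotient with holomorphic reduction map requires a holomorphic slice theorem (in the style of Holmann's results on proper holomorphic actions); that is precisely the technical content this paper outsources to \cite{TWa}. So your proposal is a faithful reconstruction of the intended proof rather than a divergent route, complete up to these two standard repairs.
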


\begin{Proposition} \labell{TWa}
Fix $a \in \R$.
Let  $(M,J)$ be a complex manifold with a holomorphic $\C^\times$ action, a
symplectic form $\omega \in \Omega^2(M)^{S^1}$
that is tamed near $\Psi\inv(a)$ and satisfies
$\omega(\xi_M,J(\xi_M)) > 0$ on $M \smallsetminus M^{S^1}$,
and  a proper moment map  $\Psi \colon M \to R$.
Assume that the  $S^1$ action on  $\Psi\inv(a)$ is free except
for $k$ orbits with stabilizer $\Z_2$.
Then for sufficiently small $\epsilon > 0$ there exists a complex manifold  $(\Tilde{M},\Tilde{J})$ with 
a holomorphic $\C^\times$ action,   a
symplectic form $\Tilde{\omega} \in \Omega^2(\Tilde{M})^{S^1}$ 
satisfying  $\Tilde{\omega}( \xi_{\Tilde{M}}, \Tilde{J}(\xi_{\Tilde{M}})) > 0$ on $\Tilde{M} \smallsetminus \Tilde{M}^{S^1}$, 
and a proper moment map
$\Tilde{\Psi} \colon \Tilde{M} \to  \R$
so that the following hold:
\begin{enumerate}
\item  $\Tilde\Psi\inv(a-\epsilon,a]$ 
contains exactly $k$ fixed points; each lies in $\Tilde\Psi\inv(a)$
and has   weights $\{-2,1,\dots,1\}$.
\item There is an equivariant symplectomorphism  
$\Tilde\Psi\inv(-\infty,a -\epsilon/2) \to \Psi\inv(-\infty,a -\epsilon/2)$ that
induces a biholomorphism 
$\Tilde M \mod_{\! t} \, S^1 \to M \mod_{\! t}\, S^1$
for all regular $t  \in (-\infty, a - \epsilon/2)$.
\item $\Tilde\omega$ is tamed on $\Tilde\Psi\inv(a-\epsilon, a+\epsilon)$.
\end{enumerate}
\end{Proposition}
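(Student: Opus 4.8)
The plan is to reduce everything to one explicit local model near each $\Z_2$-orbit and then glue. First I would record the model: let $W = \C^n$ carry the linear $\C^\times$-action of weights $(-2,1,\dots,1)$, with its standard K\"ahler form $\omega_{\mathrm{std}}$ and complex structure $J_{\mathrm{std}}$, and moment map $\Psi_{\mathrm{std}}(z) = a + \half\sum_j w_j|z_j|^2 = a - |z_1|^2 + \half\sum_{j\ge 2}|z_j|^2$. A direct computation gives everything: the origin is the unique fixed point, it sits at level $a$ with weights $(-2,1,\dots,1)$; the $\Z_2$-fixed locus is the $z_1$-axis, which lies in $\Psi_{\mathrm{std}}\inv(-\infty,a]$ and carries $S^1$-orbits with stabilizer $\Z_2$; and $\omega_{\mathrm{std}}(\xi,J_{\mathrm{std}}(\xi))>0$ away from the origin, where $\xi$ generates the $S^1$-action. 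On reduced spaces one checks that below $a$ the quotient is the singular model $\C^{n-1}/\Z_2$, while above $a$ it is smooth (the minimal resolution); thus crossing level $a$ resolves the $A_1$-singularity, which is exactly the transition we want.

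Next I would put $M$ into this normal form near each $\Z_2$-orbit. Fix such an orbit $\mathcal{O}\subset\Psi\inv(a)$. Since $\mathcal{O}$ has stabilizer $\Z_2$ and is isolated among free orbits, the generator of $\Z_2$ must act by $-1$ on the symplectic (equivalently holomorphic) normal $\C^{n-1}$ of $\mathcal{O}$ — any fixed direction would produce nearby $\Z_2$-orbits — so the normal data matches that of the $z_1$-axis in $W$. Using the holomorphic slice theorem for the $\C^\times$-action together with the taming $J$, and the fact that the space of $S^1$-equivariant $\omega$-tame almost complex structures is contractible, I would produce, for small $\epsilon$, a $\C^\times$-equivariant biholomorphism of a neighborhood of $\mathcal{O}$ in $\Psi\inv(a-\epsilon,a)$ onto the corresponding neighborhood of the $z_1$-axis in $\Psi_{\mathrm{std}}\inv(a-\epsilon,a)\subset W$, intertwining moment maps and matching $\omega$ with $\omega_{\mathrm{std}}$ up to an exact correction.

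I would then perform the surgery and read off the conclusions. Delete $\C^\times$-invariant neighborhoods of the $\Z_2$-orbits from $\Psi\inv(-\infty,a+\epsilon/2)$ and glue in copies of $W$ restricted to $\Psi_{\mathrm{std}}\inv(a-\epsilon,a+\epsilon)$ along the collar $\Psi\in(a-\epsilon,a-\epsilon/2)$, using the equivariant biholomorphism above. Because that identification is $\C^\times$-equivariant and biholomorphic, the result $(\Tilde M,\Tilde J)$ is a complex manifold with a holomorphic $\C^\times$-action agreeing with $M$ outside the surgery region; the moment maps $\Psi$ and $\Psi_{\mathrm{std}}$ assemble into a proper $\Tilde\Psi$. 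On the collar the two forms are cohomologous and both tamed by $\Tilde J$, so equivariant Moser interpolates them into one $S^1$-invariant $\Tilde\omega$, still tamed on $\Tilde\Psi\inv(a-\epsilon,a+\epsilon)$ and satisfying $\Tilde\omega(\xi_{\Tilde M},\Tilde J(\xi_{\Tilde M}))>0$ off the fixed set. The $k$ glued-in origins are the only new critical points, each at level $a$ with weights $(-2,1,\dots,1)$, giving (1); nothing was changed below $a-\epsilon/2$, giving the equivariant symplectomorphism and induced biholomorphism of reduced spaces in (2); and (3) holds by construction.

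The main obstacle is the simultaneous compatibility of holomorphic and symplectic data across the collar: I must arrange a single equivariant identification that is at once biholomorphic (so the $\C^\times$-action and $\Tilde J$ glue) and close enough to symplectic for equivariant Moser to apply while preserving the open condition $\omega(\xi,J\xi)>0$, all uniformly in $\epsilon$ and equivariantly for the $\Z_2$-stabilizer. Pinning down the normal form of $\omega$ near $\mathcal{O}$ precisely enough to feed into Moser — rather than merely matching cohomology classes — is the technical heart of the argument.
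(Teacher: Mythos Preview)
First, note that the paper does not prove this proposition here; it is imported verbatim from the companion paper \cite{TWa} (their Proposition~7.1), so there is no in-paper argument to compare against directly.  Your local-model-and-surgery picture is the right intuition, and the model $W=\C^n$ with weights $(-2,1,\dots,1)$ is correct.

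There is, however, a genuine gap at the heart of your gluing step.  You claim a $\C^\times$-equivariant biholomorphism from a neighborhood of $\mathcal O$ in $\Psi^{-1}(a-\epsilon,a)$ onto a neighborhood of the $z_1$-axis in $\Psi_{\mathrm{std}}^{-1}(a-\epsilon,a)$ that \emph{intertwines moment maps}.  These requirements are incompatible.  A $\C^\times$-equivariant map must be defined on $\C^\times$-invariant open sets, but $\Psi^{-1}(a-\epsilon,a)$ is only $S^1$-invariant, so the phrase is already ill-posed.  More substantively: the $\Z_2$-locus in $M$ is a full $\C^\times$-orbit, and since $\omega(\xi_M,J\xi_M)>0$ the moment map is a diffeomorphism from this orbit$/S^1$ onto all of $\R$.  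In $W$, by contrast, the $\Z_2$-locus is the punctured $z_1$-axis, on which $\Psi_{\mathrm{std}}=a-|z_1|^2$ takes values only in $(-\infty,a)$.  Any $\C^\times$-equivariant biholomorphism between tubular neighborhoods of these loci is determined along the entire $\C^\times$-orbit, and therefore cannot intertwine a moment map whose range is $\R$ with one whose range is $(-\infty,a)$.  So you cannot simultaneously have ``$\C^\times$-equivariant biholomorphism'' and ``intertwines moment maps,'' even on a slab of levels.

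A second, related issue: your gluing ``along the collar $\Psi\in(a-\epsilon,a-\epsilon/2)$'' does not close up the manifold.  At levels in $(a-\epsilon/2,a+\epsilon)$ you have both a piece of $M$ with a tubular hole and a piece of $W$, and these must also be glued along the \emph{transverse} boundary of the deleted tube, not just across a band of moment-map values.

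The construction in \cite{TWa} separates the two problems.  Holomorphically, one identifies a $\C^\times$-invariant tube about the $\Z_2$-orbit in $M$ with $\C^\times\times_{\Z_2}D^{n-1}\simeq W\smallsetminus\{z_1=0\}$ and then simply adjoins the divisor $\{z_1=0\}$ to obtain $(\Tilde M,\Tilde J)$ with its new fixed point.  The symplectic form is \emph{not} carried over by this identification; instead one constructs $\Tilde\omega$ by deforming $\omega$ in a controlled way near the $\Z_2$-locus so that it extends across the new fixed point, keeping the deformation supported at levels $\ge a-\epsilon$ (to get claim~(2)) and inside the tamed cone (to get claim~(3)).  Producing that deformation---not the holomorphic gluing---is the actual technical content, and your outline does not supply it.
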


\begin{Proposition}\labell{simple fixed point}
Let $(M,J)$ be a complex manifold with a holomorphic
$\C^\times$-action, a symplectic form $\omega \in \Omega^2(M)^{S^1}$ satisfying $\omega(\xi_M, J(\xi_M)) > 0$ on $M \ssminus M^{S^1}$, and a
proper moment map $\Psi \colon M \to R$.
Assume that $\dim_\C M > 1$,
and that $\Psi\inv(a,b) \cap M^{S^1}$ contains exactly  $k$ fixed points;
each lies in $\Psi\inv(0)$  and has 
weights $\{-2,1,\dots,1\}$.
Then there exists a complex orbifold $(X,I)$, 
and classes $\kappa,\eta \in H^2(X;\R)$, 
such that:
\begin{enumerate}
\item For all $t \in (a,0)$, the reduced space $M \mod_{\! t}\, S^1$ is biholomorphically symplectomorphic to $(X,I,\sigma_t)$, 
where  $\sigma_t \in \Omega^2(X)$ satisfies
$[\sigma_t] = \kappa - t \eta \in H^2(X;\R).$
\item For all $t \in (0,b)$, the reduced space 
$M \mod_{\! t}\, S^1$ is biholomorphically symplectomorphic to 
$(\Hat{X},\Hat{I},\Hat{\sigma}_t)$, where  
$\Hat{\sigma}_t \in \Omega^2(\Hat X)$ satisfies
\begin{equation}\labell{sfp2}
[\Hat{\sigma}_t] = 
q^*\kappa - t\, q^*\eta - t/2 \sum_{i=1}^k \cE_i \in H^2(\Hat{X};\R).
\end{equation}
\end{enumerate}
Here, $(\Hat{X},\Hat{I})$ is the blow-up of $(X,I)$ at isolated $\Z_2$ singularities $p_1,\dots,p_k$,  
the map $q \colon \Hat{X} \to X$ is the blow-down, 
and $\cE_i$ is the Poincare dual of the exceptional divisor $q^{-1}(p_i)$ for all $i$.
\end{Proposition}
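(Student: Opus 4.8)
The plan is to study the unique wall of $\Psi$ in $(a,b)$, which sits at the critical value $0$, by combining an equivariant holomorphic linearization near the fixed points with the complex-geometric description of reduced spaces in Proposition~\ref{reduction}. First I would fix local models. Since the $\C^\times$ action is holomorphic and each $p_i$ is an isolated fixed point, the action is linearizable near $p_i$; using the taming hypothesis $\omega(\xi_M,J(\xi_M))>0$ to pin down the direction of the moment flow, I would choose $\C^\times$-equivariant holomorphic coordinates $(z_0,z_1,\dots,z_{n-1})$ on a neighborhood of $p_i$ in which the action is $\lambda\cdot z=(\lambda^{-2}z_0,\lambda z_1,\dots,\lambda z_{n-1})$, where $n=\dim_\C M$, and in which $\Psi$ has the Morse behavior of $-2|z_0|^2+\sum_{j\ge 1}|z_j|^2$ at $p_i$. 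Because $0$ is the only critical value in $(a,b)$, the semistable locus $\C^\times\cdot\Psi\inv(t)$ is constant on each of the chambers $(a,0)$ and $(0,b)$, so Proposition~\ref{reduction} furnishes complex orbifolds $(X,I)$ and $(\Hat X,\Hat I)$, independent of $t$ up to biholomorphism, together with biholomorphically symplectomorphic identifications of the reduced spaces $M\mod_{\!t}\,S^1$ on the two chambers.

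Next I would extract the birational relationship from the local models. For $t<0$ the defining equation forces $z_0\neq 0$; dividing by $\C^\times$ leaves the residual stabilizer $\{\pm1\}$ acting by $z_j\mapsto -z_j$ on the coordinates of weight $+1$, so the local quotient is $\C^{n-1}/\Z_2$ and the image of $p_i$ is an isolated $\Z_2$ singularity of $X$. For $t>0$ the equation forces $(z_1,\dots,z_{n-1})\neq 0$, and the quotient of $\{(z_0,z_1,\dots,z_{n-1}):(z_1,\dots,z_{n-1})\neq 0\}$ by $\C^\times$ is the total space of $\mathcal{O}(-2)\to\bbP^{n-2}$, which is exactly the blow-up of $\C^{n-1}/\Z_2$ at the origin, with exceptional divisor $\bbP^{n-2}$ and normal bundle $\mathcal{O}(-2)$. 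Away from the $p_i$ the two semistable loci consist of the same $\C^\times$-orbits, so these local identifications glue to a global biholomorphism exhibiting $(\Hat X,\Hat I)$ as the blow-up $q\colon\Hat X\to X$ of $X$ at the $\Z_2$ points $p_1,\dots,p_k$.

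Finally I would compute cohomology classes. On each chamber the Duistermaat--Heckman theorem makes the reduced class vary affinely: set $[\sigma_t]=\kappa-t\eta$ on $(a,0)$, which defines $\kappa,\eta\in H^2(X;\R)$ with $\eta$ the Euler class of $\Psi\inv(t)\to X$, and write $[\Hat\sigma_t]=\Hat\kappa-t\Hat\eta$ on $(0,b)$. Since the circle bundles over the complement of the exceptional divisors in $\Hat X$ and over $X\ssminus\{p_1,\dots,p_k\}$ coincide, both $\Hat\kappa-q^*\kappa$ and $\Hat\eta-q^*\eta$ are supported on the exceptional divisors and hence lie in $\bigoplus_i\R\,\cE_i$. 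Letting $t\to 0^+$ shrinks the exceptional divisors to the singular points, so $\int_L\Hat\sigma_t\to 0$ for every line $L\subset\bbP^{n-2}$, which forces $\Hat\kappa=q^*\kappa$. To fix the remaining coefficient $c$ in $\Hat\eta=q^*\eta+c\sum_i\cE_i$, I would compute the symplectic area of such a line directly from the local model: at level $t$ the slice $\{z_0=0\}$ reduces to $\bbP^{n-2}$, on which a line has area $t$, while $\langle\cE_i,L\rangle=\deg\big(\mathcal{O}(-2)|_L\big)=-2$; comparing these in $\int_L[\Hat\sigma_t]=-t\,c\,\langle\cE_i,L\rangle$ gives $c=\half$ and hence \eqref{sfp2}.

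The main obstacle is the gluing in the middle step: upgrading the linear local quotient picture to a genuine global biholomorphism $\Hat X\cong\operatorname{Bl}_{p_1,\dots,p_k}X$ while working only with a tamed (not K\"ahler) form and a merely holomorphic $\C^\times$ action, rather than in the projective GIT category where such wall-crossings are classical. This is precisely where the quotient-complex-structure machinery of Proposition~\ref{reduction} and the constructions of \cite{TWa} are essential, both to make the chamber-wise complex structures compatible and to guarantee that the transition across the wall is modeled fiberwise by the weighted blow-up above; once $q$ and the classes $\cE_i$ are identified, the Duistermaat--Heckman bookkeeping in the last step is routine.
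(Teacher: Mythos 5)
A preliminary point: this paper contains no proof of Proposition~\ref{simple fixed point} to compare against --- it is imported verbatim from the companion paper, as \cite[Proposition 7.8]{TWa}, alongside Propositions~\ref{reduction} and~\ref{TWa}. So your proposal can only be measured against the argument that result encapsulates, and in outline you have reconstructed exactly that route: holomorphic linearization at each $p_i$ (Bochner linearization of the $S^1$-action plus the identity theorem to make the full $\C^\times$-action linear); the observation $d\Psi(J\xi_M)=-\omega(\xi_M,J(\xi_M))<0$, which makes $\Psi$ strictly monotone along the $\R_{>0}$-flow and hence makes $U_t=\C^\times\cdot\Psi\inv(t)$ constant on each chamber $(a,0)$ and $(0,b)$, so that Proposition~\ref{reduction} produces fixed complex orbifolds $(X,I)$ and $(\Hat X,\Hat I)$; the linear-model computation showing that crossing $t=0$ replaces each chart $\{z_0\neq0\}/\C^\times\cong\C^{n-1}/\Z_2$ (residual stabilizer $\pm1$) by $\{z'\neq0\}/\C^\times\cong\mathcal{O}(-2)\to\bbP^{n-2}$, i.e.\ the orbifold blow-up; and Duistermaat--Heckman bookkeeping for the classes. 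You also correctly identify the crux: that these local models glue to a global blow-down $q$ in the tame (non-K\"ahler) category is precisely the hard content of \cite{TWa}, so your proposal is a faithful reduction of the statement to that machinery rather than an independent proof --- which is no worse than what the paper itself does, since it likewise just cites \cite{TWa}.

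One step deserves tightening. Your normalization of the coefficient in \eqref{sfp2} via ``a line $L\subset E_i$ has area $t$'' is convention-laden and not immediate here, because $\omega$ is only tamed and nonstandard in the linearizing coordinates, so the exact identity (rather than an asymptotic one as $t\to0^+$, where only the $(1,1)$-part of $\omega(p_i)$ survives on the holomorphic curve $L$) requires an argument. A cleaner, purely topological pinning of the coefficient: on the chamber $(0,b)$ one has $[\Hat\sigma_t]=\Hat\kappa-t\Hat\eta$ with $\Hat\eta$ the (orbifold) Euler class of $\Psi\inv(t)\to\Hat X$; restricted to $E_i$ the level set is the weight-one Hopf bundle of $\C^{n-1}$, so $(\Hat\eta,L)=-1$, while $(q^*\eta,L)=0$ and $(\cE_i,L)=\deg\bigl(\mathcal{O}(-2)|_L\bigr)=-2$. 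Writing $\Hat\eta=q^*\eta+c\sum_i\cE_i$ forces $c=\half$, with no area normalization at all; your $t\to0^+$ limit then gives $\Hat\kappa=q^*\kappa$ as you say (using $H^2(\Hat X;\R)=q^*H^2(X;\R)\oplus\bigoplus_i\R\,\cE_i$, which merits a word for orbifold point blow-ups). With these caveats your proposal is sound and follows the route on which the cited result is built.
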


\begin{Remark}\labell{reverse}
Alternatively, by \cite[Remark 4.4]{TWa}, Proposition~\ref{TWa} 
is true with the following  
modifications:
First,  in claim (1), the preimage  is $\Tilde\Psi\inv[a,a+\eps)$ instead of $\Tilde\Psi\inv(a-\eps,a]$, and the weights are $\{2,-1,\dots,-1\}$ instead of 
$\{-2,1,\dots,1\}$.
Second, in claim (2), the set $(-\infty,a-\epsilon/2)$ is replaced
by
$(a + \epsilon/2, \infty)$.

Similarly,  Proposition~\ref{simple fixed point} is true with the following 
modifications:
Assume that  the weights 
are $\{2,-1,\dots,-1\}$ 
at each fixed points in $\Psi\inv(0)$,
instead of 
$\{-2,1,\dots,1\}$.
In this case, claim (1) holds for all $t \in (0,b)$, not all $t \in (a,0)$.
Claim (2) holds for all $t \in (a,0)$, not all $t \in (0,b)$;
moreover,  \eqref{sfp2} is replaced by
$$[\Hat{\sigma}_t] = 
q^*\kappa - t\, q^*\eta + t/2 \sum_{i=1}^k \cE_i \in H^2(\Hat{X};\R).$$

\end{Remark}

Let  $\pi \colon T \to T/\Z_2$ be the quotient map.
Let $(\Hat{X},\Hat{I})$ be the K3 surface formed by
blowing up the quotient $T/\Z_2$ at the $16$ isolated $\Z_2$ singularities $p_1,\dots,p_{16}$.
Finally, let  $q \colon \Hat{X} \to T/\Z^2$
be the blow-down map, 
and  let $\cE_j \in H^2(\Hat{X};\Z)$ be  Poincare dual to the 
exceptional divisor $q^{-1}(p_i)$ for 
all $i$.
We will use the criteria below to prove claim (3) of Proposition~\ref{existsfixed}

\begin{Lemma}\labell{primitive}
Given $y \in  H^2(T/\Z_2;\R)$  such that 
$ \pi^*(y/2)$ is a primitive class in $H^2(T;\Z)$, 
the pull-back $q^*(y)$ is a primitive class in $H^2(\Hat{X};\Z)$.
If additionally  $x \in H^2(\Hat{X};\Z)$  satisfies $(\cE_i,x) = \pm 1$ for some $i \in \{1,\dots,16\},$
then $q^*(y), x$ induce a primitive embedding $\Z^2 \hookrightarrow H^2(\Hat{X};\Z)$.
\end{Lemma}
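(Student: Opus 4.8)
The statement splits into an integrality/primitivity claim for $q^{*}(y)$ and a short deduction for the rank-two embedding; the plan is to reduce the first to the primitivity of $\beta := \pi^{*}(y/2) \in H^2(T;\Z)$ via the classical description of the Kummer lattice, and then to read off the second. Write $L := H^2(\Hat{X};\Z)$ and let $(\cdot,\cdot)$ denote the cup-product form. The first step is to locate $q^{*}(y)$: since each $\cE_i$ is contracted by the blow-down $q$, one has $(q^{*}(y),\cE_i)=0$ for every $i$, whereas $(\cE_i,\cE_j)=-2\delta_{ij}$ because the $\cE_i$ are disjoint smooth rational $(-2)$-curves. Thus $q^{*}(y)$ lies in the orthogonal complement $\Pi^{\perp}$ of the saturation $\Pi$ of $\langle \cE_1,\dots,\cE_{16}\rangle$.

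Next I would pin down $\Pi^{\perp}$ geometrically. Blowing up $T$ at its sixteen two-torsion points gives $\Tilde{T}$ with blow-down $\mu \colon \Tilde{T}\to T$; the involution $z\mapsto -z$ lifts to $\Tilde{T}$, and passing to the quotient realizes $\Hat{X}$ as a double cover $\Tilde{\pi}\colon \Tilde{T}\to\Hat{X}$ fitting into the commuting square $q\circ\Tilde{\pi}=\pi\circ\mu$. Hence $\Tilde{\pi}^{*}q^{*}=\mu^{*}\pi^{*}$, and since $\pi^{*}(y)=2\beta$ this gives $\Tilde{\pi}^{*}q^{*}(y)=2\mu^{*}\beta$. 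Because $\Tilde{\pi}$ has degree two and $\mu^{*}$ is an isometry onto its image, one computes $(q^{*}(y),q^{*}(y'))_{\Hat{X}}=2\bigl(\pi^{*}(y/2),\pi^{*}(y'/2)\bigr)_{T}$. This is exactly the assertion that $q^{*}(y)\mapsto \beta=\pi^{*}(y/2)$ identifies $\Pi^{\perp}$ isometrically with $H^2(T;\Z)(2)$, i.e. the lattice $H^2(T;\Z)$ with its form multiplied by $2$. The integral form of this identification---that $\Pi^{\perp}\subseteq L$ consists \emph{precisely} of the classes $q^{*}(y)$ with $\pi^{*}(y/2)\in H^2(T;\Z)$, so that in particular each such $q^{*}(y)$ is integral---is the classical Kummer lattice computation of Nikulin (see also \cite{BPV}), which I would cite.

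Granting this, the first claim is immediate: an orthogonal complement is always a primitive (saturated) sublattice, so an element of $\Pi^{\perp}$ is primitive in $L$ if and only if it is primitive in $\Pi^{\perp}$; and primitivity is a property of an element of the underlying abelian group, hence is unaffected by rescaling the form by $2$. Therefore $q^{*}(y)$ is primitive in $L$ if and only if $\beta=\pi^{*}(y/2)$ is primitive in $H^2(T;\Z)$, which is the hypothesis. \emph{The main obstacle is precisely this integral identification} $\Pi^{\perp}\cong H^2(T;\Z)(2)$: the form computation above is routine, but the integrality of $q^{*}(y)$ and the way the Kummer lattice $\Pi$ and $\Pi^{\perp}$ glue to the unimodular lattice $L$ are genuinely arithmetic, and are where one must invoke the structure of the Kummer lattice. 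A more hands-on alternative would be to prove primitivity directly by exhibiting $w\in L$ with $(q^{*}(y),w)$ odd---choosing $\beta_0$ with $(\beta,\beta_0)=1$ (possible since $H^2(T;\Z)$ is unimodular and $\beta$ is primitive) and using a ``glue vector'' $w=\half\bigl(q^{*}(y_0)+\sum_{i\in S}\cE_i\bigr)\in L$, for which $(q^{*}(y),w)\equiv(\beta,\beta_0)\pmod 2$---but the existence of such glue vectors is again exactly the Kummer-lattice gluing, so the difficulty is the same.

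Finally, the rank-two statement follows formally from the first part. Since $(q^{*}(y),\cE_i)=0$ while $(x,\cE_i)=\pm1\neq0$, the classes $q^{*}(y)$ and $x$ are linearly independent and so span a rank-two sublattice. To see the embedding is primitive, suppose $v=a\,q^{*}(y)+b\,x\in L$ for some $a,b\in\R$. Pairing with $\cE_i$ gives $(v,\cE_i)=b(\pm1)\in\Z$, so $b\in\Z$; then $a\,q^{*}(y)=v-b\,x\in L$, and since $q^{*}(y)$ is primitive this forces $a\in\Z$. Thus every integral class that is a real combination of $q^{*}(y)$ and $x$ is an integral combination, which is the asserted primitivity of $\Z^2\hookrightarrow H^2(\Hat{X};\Z)$.
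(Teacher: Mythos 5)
Your proposal is correct and takes essentially the same route as the paper: the paper uses the same commuting square $q \circ \Hat{\pi} = \pi \circ \sigma$ relating the blow-up of $T$ to $\Hat{X}$, delegates the arithmetic input to the classical Kummer-lattice result (citing \cite[Corollary VIII.5.6]{BPV} that the image of $\alpha = \Hat{\pi}_{!} \circ \sigma^*$ is a primitive sublattice), and computes $\alpha\big(\pi^*(y/2)\big) = q^*(y)$ via $\Hat{\pi}_{!}\Hat{\pi}^* = 2$, which is exactly your packaged identification $\Pi^\perp \cong H^2(T;\Z)(2)$ with its integral description. Your explicit deduction of the rank-two primitive embedding by pairing with $\cE_i$ is precisely the argument the paper leaves as ``follows immediately.''
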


\begin{proof}
Let $\Hat{T}$ be the blow-up of $T$ at the 16 fixed points of the $\Z_2$ action,
and let $\sigma \colon \Hat{T} \to T$ be the blow-down map.
The involution on $T$ induces an involution on $\Hat{T}$ that fixes the exceptional divisors. 
The quotient $\Hat{T}/\Z_2$
is naturally isomorphic to $\Hat{X}$;  let $\Hat{\pi} \colon \Hat{T} \to \Hat{X}$ be the quotient map.
Note that
$q \circ \Hat{\pi} = \pi \circ \sigma$. 

Define $\alpha \colon H^*(T;\Z) \to H^*(\Hat{X};\Z)$ by $\alpha = \Hat{\pi}_{!} \circ \sigma^*$,
where $\Hat{\pi}_{!} \colon H^*(\Hat{T};\Z)  \to H^*(\Hat{X};\Z)$ is the push-forward map.
By \cite[Corollary VIII.5.6]{BPV}, the lattice $\alpha(H^2(\Hat{X};\Z))$ is  primitive.
Moreover, 
$\Hat{\pi}_{!}(\Hat\pi^*(z)) = 2 z$
for all $z \in H^*(\Hat{X};\Z)$
because  $\Hat\pi$ has degree $2$.
Therefore,
given $y \in H^2(T/\Z_2;\R)$ such that $\pi^*(y/2)$ is a primitive class in $H^2(T;\Z)$, 
its image
$$\alpha \left(\pi^*(y/2)\right) =   \Hat{\pi}_{!}( \sigma^*(\pi^*(y/2)))
=   \Hat{\pi}_{!}( \Hat{\pi}^*(q^*(y)))/2 =  q^*(y).$$
is a primitive class in $H^2(\Hat{X};\Z)$.
Since $(\cE_j,q^*(y)) = 0$ for all $j$, the second claim follows immediately.
\end{proof}

We are now ready to begin our proof.
\begin{proof}[Proof of Proposition~\ref{existsfixed}]

Let $(M_+,J_+)$ be the complex manifold
with  locally free holomorphic $\C^\times$
action, 
symplectic form 
$\omega_+ \in \Omega^2(M_+)^{S^1}$ satisfying
$\omega_+(\xi_{M_+},J_+(\xi_{M_+})) > 0$, 
proper moment map  $\Psi_+ \colon
M_+ \to \R$, and $\C^\times$ invariant map $\pi_+ \colon M_+ \to T/\Z_2$ 
described in Proposition~\ref{existslf}.
For all $t \in \R$,  the map  $\pi_+$ induces a symplectomorphism from the reduced space
$M_+ \mod_{\! t}\, S^1$ to the Kummer surface $T/\Z_2$ with
$(\sigma_+)_t \in \Omega^2(T/\Z_2)$, where
\begin{equation}\labell{ef1}
(\sigma_+)_t = dz_1 dz_2 + d \zbar_1 d \zbar_2 + \sqrt{-1}\,  t  dz_1 d \zbar_1 +  \sqrt{-1}\, t dz_2 d \zbar_2 
.\end{equation}
In particular, the $S^1$ action on $\Psi_+\inv(t)$ is free except for $16$ orbits
with  stabilizer $\Z_2$.
Moreover,
$\pi_+$ induces a biholomorphism from $M_+/\C^\times$ to $T/\Z_2$,
and so $U_t := \C^\times \cdot \Psi_+\inv(t) = M_+$ for all $t \in \R$.
Hence, Proposition~\ref{reduction} implies that 
there is a natural complex structure on the reduced space
$M_+ \mod_{\!t }\, S^1$, and  
the symplectomorphism from $M_+ \mod_{\! t}\, S^1$ to $T/\Z_2$ described above is biholomorphism.
Finally, $J_+$ tames $\omega_+$ on $\Psi_+\inv(0,\infty)$.

Therefore, 
we can apply Proposition~\ref{TWa} with $a = 1$
to  construct
a complex manifold $(\Tilde{M}_+, \Tilde{J}_+)$  with a holomorphic
$\C^\times$ action,
a symplectic form
$\Tilde{\omega}_+ \in \Omega^2(\Tilde{M}_+)^{S^1}$  
satisfying $\Tilde{\omega}_+ (\xi_{\Tilde{M}_+}, \Hat{J}_+(\xi_{\Tilde{M}_+})) > 0$ 
on $\Hat{M}_+ \smallsetminus \Hat{M}_+^{S^1}$,
and a  proper moment map
$\Hat{\Psi}_+ \colon \Hat{M}_+ \to \R$.
Since we may assume that $\epsilon < 1$,
claim (2) implies that their exists a non-empty open set $U_+ \subset (0,1)$ 
and an equivariant symplectomorphism
from $\Tilde\Psi_+\inv(U_+)$ to $\Psi_+\inv(U_+)$ that induces
a biholomorphism from $\Tilde{M}_+ \mod_{\! t}\, S^1$  to $ M_+ \mod_{\! t}\, S^1$
for all $t \in U_+$.
In particular, the reduced space $\Tilde{M}_+ \mod_{\! t}\, S^1$
is biholomorphically symplectomorphic to the Kummer surface
$T/\Z_2$ with the symplectic form
$(\sigma_+)_t \in \Omega^2(T/\Z_2)$ for all $t \in U_+$.
Since $M_+$ has no fixed points, claims (1) and (2) together imply that,
after possibly shrinking $\epsilon$,
the preimage  $\Hat\Psi_+\inv(-\infty,\epsilon)$  contains exactly 
$16$ fixed points; each lies in $\Hat\Psi_+\inv(1)$ 
and has weights $\{-2,1,1\}$.  
Finally, by claim (3), the symplectic form 
$\Tilde\omega_+$ is tamed on $\Tilde\Psi_+\inv(1-\epsilon,1+\epsilon)$.

Hence, by Proposition~\ref{simple fixed point},
for all $t \in (1, 1+ \epsilon)$,
the reduced space $\Tilde{M}_+ \mod_{\! t} \, S^1$
is biholomorphically symplectomorphic  to the K3 surface
$(\Hat{X},\Hat{I})$ with symplectic form $(\Hat{\sigma}_+)_t \in \Omega^2(T/\Z_2)$,
where
\begin{equation}\labell{ef2}
\bigl[(\Hat\sigma_+)_t\bigr] =  q^*\bigl(\bigl[(\sigma_+)_t\bigr]\bigr) - (t - 1)/2 \sum_{i=1}^{16} \cE_i.
\end{equation}
Moreover, by  Proposition~\ref{reduction}, the reduced complex structure $\Hat{I}$ tames
$(\Hat{\sigma}_+)_t$ for all $t \in (1,1+\epsilon)$.
Combining \eqref{ef1} and \eqref{ef2}, 
we see that 
$[(\Hat{\sigma}_+)_t] = \Hat{\kappa} - t \Hat{\eta}_+$ 
for all $t \in (1,1+\epsilon)$, 
where
\begin{gather*}%\labell{omega+}
\Hat{\kappa} = q^*\big([dz_1 dz_2 + d \zbar_1 d \zbar_2]\big) +  1/2 \sum_{i=1}^{16} \cE_i,  \quad \mbox{and} \\   
\Hat{\eta}_+ =   - q^*\big([  \sqrt{-1} \, dz_1 d \zbar_1 + \sqrt{-1} \, dz_2 d \zbar_2]\big) + 1/2 \sum_{i=1}^{16} \cE_i.
\end{gather*}

A direct calculation shows that
\begin{multline*}
\int_{T/Z_2} (dz_1 dz_2 + d \zbar_1 d \zbar_2) \wedge (dz_1 dz_2 + d \zbar_1 d \zbar_2) = \\
\frac{1}{2} \int_T (dz_1 dz_2 + d \zbar_1 d \zbar_2) \wedge (dz_1 dz_2 + d \zbar_1 d \zbar_2) = 4.\end{multline*}
Therefore, the cup product of  $q^*\big([dz_1 dz_2 + d \zbar_1 d \zbar_2]\big) 
\in H^2\big(\Hat{X}\big)$ with itself is $4$.
Since 
$$( q^*\big( [dz_1 dz_2 + d \zbar_1 d \zbar_2] \big), \cE_i) = 0
\mbox{ and } (\cE_i,\cE_j) = -2 \delta_{ij}$$ for all $i$ and $j$, this implies that
\begin{equation*}\labell{prod1}
(\Hat{\kappa}, \Hat{\kappa}) = 4 +  16 (-2)/2^2  = - 4.
\end{equation*}
By similar arguments,
\begin{equation*}\labell{prod2}
(\Hat{\eta}_+, \Hat{\eta}_+) =  - 4
\ \  \mbox{and} \   \ 
(\Hat{\eta}_+, \Hat{\kappa}) =  -8. 
\end{equation*}
Thus,
 $(\Hat{\kappa}  - \Hat{\eta}_+, \Hat{\kappa} - t \Hat{\eta}_+)  = -4 t^2+ 
16 t - 4 $,
as required.

Finally, since $$1/2 \left[ dz_1 dz_2 + d \zbar_1 d \zbar_2 + 
\sqrt{-1}\,  dz_1 d \zbar_1 + \sqrt{-1}\, dz_2 d \zbar_2\right]$$
is a primitive integral  in $H^2(T;\Z)$, Lemma~\ref{primitive} implies that 
that $\Hat{\kappa} - \Hat{\eta}_+$  is a primitive  class  in $H^2(\Hat{X};\Z)$.
Moreover, since   $\Hat{\eta}_+$ is  Euler class
of the circle bundle  $\Tilde{\Psi}_+\inv(t) \to \Tilde{M}_+ \mod_{\! t}\, S^1
\simeq \Hat{X}$
for all $t \in (1, 1+ \epsilon)$,  the class 
$\Hat{\eta}_+$ lies in the integral cohomology 
$H^2(\Hat{X};\Z)$.
Hence,  since
the cup product $(\Hat{\eta}_+,\cE_i) = -1$ for all $i$, 
Lemma~\ref{primitive} implies that
$\Hat{\kappa}, \Hat{\eta}_+$ induce a primitive embedding, as required.

The construction of $\Tilde{M}_-$ is analogous, except that in this case
we begin with the example $M_-$ given in Proposition~\ref{existslf},
instead of taking $M_+$.
We then  apply  the variants of Proposition~\ref{TWa} and ~\ref{simple fixed point} 
suggested in  Remark~\ref{reverse}, and in the former case we take $a = -1$, instead of $a = 1$.
In this case,
for all $t \in (-1-\epsilon, -1)$,
the reduced space $\Tilde{M}_- \mod_{\! t} \, S^1$
is biholomorphically symplectomorphic to the tame K3 surface
$(\Hat{X},\Hat{I}, (\Hat{\sigma}_-)_t)$ with
$[(\Hat{\sigma}_-)_t] = \Hat{\kappa} - t \Hat{\eta}_-$, where
$$
\Hat{\eta}_- =   - q^*\big([ \sqrt{-1} \, dz_1 d \zbar_1 + \sqrt{-1}\, dz_2 d \zbar_2]\big) - 1/2 \sum_{i=1}^{16} \cE_i.
$$

\end{proof}

\end{document}